\chardef\bslash=`\\ % p. 424, TeXbook
\newtheorem{definition}{Definition}
\newtheorem{thm}{Theorem}
\newtheorem{lem}{Lemma}
\newtheorem{cor}{Corollary}
\newtheorem{remark}{Remark}
\newtheorem{remarks}{Remarks}
\newtheorem{prop}{Proposition}
\def\JS{\textrm{JS}}
\def\js{\textrm{js}}
\def\Jc{\textrm{Jc}}
\def\inv{\text{inv}}
\def\Orb{\textrm{Orb}}
\def\sor{\text{Sor}}
\def\rec{\text{rec}}
\def\N{\mathbb{N	}}
\newcommand{\ds}{\displaystyle}
\newcommand{\qbinom}[3]{ {\left[\begin{array}{@{\, }c@{\, } } #1\\  #2 \end{array}\right]_{#3}} }
\newcommand{\qJc}[3]{ \text{Jc}_{#1}^{#2} (#3;q)}
\newcommand{\qJS}[3]{ \text{JS}_{#1}^{#2} (#3;q)}
\newcommand{\eval}[2][\right]{\relax
  \ifx#1\right\relax \left.\fi#2#1\rvert}
\numberwithin{equation}{section}
\begin{document}

\title[$q$-classical polynomials and $q$-Jacobi-Stirling numbers]{$q$-differential equations for $q$-classical polynomials and $q$-Jacobi-Stirling numbers
\footnote{\normalfont version accepted for publication in {\it Mathematische Nachrichten}}}

\author{Ana F. Loureiro}
\address[Ana F. Loureiro]{School of Mathematics, Statistics \& Actuarial Science (SMSAS), Cornwallis Building, University of Kent, Canterbury, Kent CT2 7NF (U.K.)}
\email[Corresponding author]{A.Loureiro@kent.ac.uk}

\author{Jiang Zeng}
\address[J. Zeng]{Universit\'e de Lyon, Universit\'e Claude Bernard Lyon~1, Institut Camille Jordan, UMR 5208 du CNRS,
 43 boulevard du 11 novembre 1918, 
 69622 Villeurbanne cedex (France)}
%\email{A.Loureiro@kent.ac.uk}

%\author[A.F. Loureiro]{Ana F. Loureiro\footnote{Corresponding author:
%    e-mail: {\sf A.Loureiro@kent.ac.uk}, Phone: +44(0)1227\,823\,867}\inst{1}} \address[\inst{1}]{School of Mathematics, Statistics \& Actuarial Science (SMSAS), Cornwallis Building, University of Kent, Canterbury, Kent CT2 7NF (U.K.)}
%
%\author[J. Zeng]{Jiang Zeng\inst{2}}
%\address[\inst{2}]{Universit\'e de Lyon, Universit\'e Claude Bernard Lyon~1,
%Institut Camille Jordan, UMR 5208 du CNRS,
%43 boulevard du 11 novembre 1918, 
%69622 Villeurbanne cedex (France)}

\begin{abstract}
 We introduce, characterise and provide a combinatorial interpretation for the so-called $q$-Jacobi-Stirling numbers. This study is motivated by their key role in the (reciprocal) expansion of any power of a second order $q$-differential operator having the $q$-classical polynomials as eigenfunctions in terms of other even order operators, which we explicitly construct in this work. The results here obtained can be viewed as the $q$-version of those given by Everitt {\it et al.} and by the first author, whilst the combinatorics of this new set of numbers is a $q$-version of the Jacobi-Stirling numbers given by Gelineau and the second author.
\end{abstract}

%% The maketitle command must follow after the abstract.
\maketitle

%% If there is not enough space inside the running head
%% for all authors including the title you may provide
%% the leftmark in one of the following three forms:

\section{Introduction and main results}

The present paper  concerns with the explicit construction of even order $q$-differential operators having the so-called $q$-classical polynomials as eigenfunctions. Motivated by this goal, we introduce, develop and give a combinatorial interpretation to the so-called $q$-Jacobi-Stirling numbers. Among other things, 
this work consists of a $q$-version of the studies in \cite{AndrewsEggeGawLittle2013, Egge2010,LittleEvAlJacobi2007,LittleEvAlJacobi2000,ZengGelineau,GLZ,Loureiro2010}, while assembling the information in a coherent framework. To our best knowledge the results here presented are new in the theory.

At the centre is the $q$-derivative operator or  {\it Jackson}-derivative, here denoted as $D_{q}$ and defined for any polynomial $f$ as follows 
\begin{equation}\label{q derivative}
\big( D_{q} f \big)(x) := \frac{f(qx) - f(x)}{(q-1) x} \qquad \text{if}\quad x\neq 0,
\end{equation}
and $(D_{q} f)(x) =f'(0)$ if $x=0$, 
where $q$ is a complex number such that $q\neq 0$ and $|q|\neq 1$.
%$q$ is a complex number such that $q\neq 0$ and $q^n\neq 1$ for any positive integer $n$. 
Throughout  the text, the symbols $j,k,m$ and $n$ will be allocated for integer values and we will write $n\geqslant k$ to mean all the integers $n$ greater than or equal to $k$. As $q\to 1$, the $q$-derivative operator $D_q$ reduces to the derivative operator, $D_{q}\to D$, and its action on the sequence of monomials $\{x^n\}_{n\geqslant 0}$  is given by 
$$
	\left(D_q \zeta^n\right)(x) =\left\{\begin{array}{lcl} 
							[n]_q \ x^{n-1} &\text{if} & n\geqslant 1, \\
							0 &\text{if} &  n=0, 
						\end{array}\right.
$$
where $[a]_{q} := \frac{q^a - 1}{q-1}$ for any complex number $a$. 

Recalling the classical $q$-notations found on \cite{GasperRahman,Koekoek},  the {\it $q$-shifted factorial} (also known as the  {\it$q$-Pochhammer symbol}) is defined by 
 $(x;q)_{k}:=\prod\limits_{i=0}^{k-1}(1-xq^i)$, for $k\geqslant 1$, and $(x;q)_{0}:=1$, whilst the $q$-binomial coefficients are defined by 
 $$
{n\brack k}_q=\frac{(q;q)_n}{(q;q)_k(q;q)_{n-k}}\qquad \text{if}\quad 0\leqslant
 k\leqslant n,
$$
and ${n\brack k}_q=0$ otherwise. In addition, for practical reasons, we introduce the supplementary  notation 
$$
 [a;q]_{n}:=\prod_{k=0}^{n-1}[a+k]_q\quad\text{and}\quad  [n]_q!:= [1;q]_{n}.
 $$
 Hence, $[a;q]_{n}:=(1-q)^{-n} (q^a;q)_n$, for any $n\geqslant0$.
% and therefore $[n]_q!= [1;q]_{n}$.

Throughout the text, we consider sequences of polynomials of successive degrees starting at zero and leading coefficient equals to one, which we refer to as {\it monic polynomial sequences} (MPS in short). An MPS forms a basis of the vector space of polynomials with complex coefficients,  denoted by $\mathcal{P}$, and we will mainly deal with the so-called $q$-classical polynomial sequences. These are monic orthogonal polynomial sequences (soon abbreviated to MOPS) possessing Hahn's property \cite{Hahn1949}, that goes as follows: 
\begin{definition}\label{def q cla} An MOPS $\{ P_n\}_{n\geqslant 0}$ is called {\it $q$-classical} whenever the sequence of its $q$-derivatives,  $\{ P_n^{[1]}\}_{n\geqslant 0}$, where $P_n^{[1]}(x):=\frac{1}{[n+1]_q} \left(D_q P_{n+1}\right)(x)$, is also an MOPS. 
\end{definition}
Relevant properties about orthogonality and the $q$-classical character of a sequence are summarised in \S \ref{sec: preliminaries}, but we highlight the fact that an MOPS $\{ P_n\}_{n\geqslant 0}$ is $q$-classical if and only if there exist two polynomials $\Phi$ (of degree at most 2) and $\Psi$ (of degree 1)  such that 
\begin{equation}\label{Lq def}
	\mathcal{L}_{q}[P_n](x):=\Big(   \Phi(x)  D_{q}\circ D_{q^{-1}}  - \Psi(x)   D_{q^{-1} }  \Big) P_n(x) = \chi_{n} P_n(x) \ , \ n\geqslant 0,
\end{equation} 
where 
\begin{equation}\label{eigenval Bohner chi n}
\begin{array}{lcl}
	\chi_{n}
	&=& \left\{ \begin{array}{lcll}  
	- [n]_{q^{-1}} \Psi'(0) 
		& \text{if} & \deg\Phi=0,1, \vspace{0.2cm}\\
	 ~ [n]_{q^{-1}}  \left(  [n-1]_{q} 
		-    \Psi'(0) \right) 
		& \text{if} & \deg\Phi=2 , &   n\geqslant 0.
	\end{array}\right. 
	\end{array}
\end{equation}
In other words, the elements of a $q$-classical MOPS are eigenfunctions of the second-order $q$-differential operator $\mathcal{L}_q$ of $q$-Sturm-Liouville type (see Proposition \ref{Prop qclassical} for more details). 

Regarding their importance in various fields of mathematics, there is a considerable bibliography on the subject -- without any attempt for completion, we refer to \cite{Bateman, GasperRahman, Hahn49Hyper,IsmailBook, KherijiMar2002, Koekoek, Lesky94, Lesky01, Nikiforov93}. In particular, they appear in the literature  in the framework of discretisations of hypergeometric second order differential operators in $q$-lattices (see \cite{Koekoek,Nikiforov91}). Structural properties of such polynomials have been intensively studied in \cite{AlvMed01,KherijiMar2002,AlvMedMarc01} following the program developed for P. Maroni for the classical ones by using an algebraic approach. From an historical point of view, in \cite{Hahn1949}, q-classical orthogonal polynomial sequences appear for the first time as those sequences of orthogonal polynomials $\{p_n\}_{n\geqslant 0}$ such that the sequence of polynomials $\{q_n(x):= D_q p_{n+1}(x)\}_{n\geqslant 0}$ is also orthogonal. The $q$-classical polynomials include the q-polynomials of Al-Salam and Carlitz, the Stieltjes-Wigert, the  q-Laguerre, the Little q-Jacobi, and an exhaustive list can be found in Table \ref{Table qclassical} -- see \cite{KherijiMar2002} for a detailed survey.

When $q$ tends to $1$, we recover the classical polynomials of Hermite, Laguerre, Bessel or Jacobi. These sequences are the only orthogonal polynomial sequences whose elements are eigenfunctions of  the second-order linear differential operator $\mathcal{L}:=\mathcal{L}_{1}$, known as the Bochner differential operator, named after \cite{Bochner1929}. 
Krall \cite{Krall1938} showed that if the elements of a classical polynomial sequence are eigenfunctions of a linear differential operator, then it must be of even order. In fact, as observed in  \cite[Theorem 1]{Miranian}, there exists a polynomial $P$ so that any even order differential operator having  classical polynomials as eigenfunctions can be written as $P(\mathcal{L})$. Thus, if $\mathcal{O} = \sum_{k=0}^{2n} a_k(x)D^k$, where $a_k(x)$ are polynomials of degree at most $k$, then a natural question is to state the connection between the polynomials $a_k(x)$ and the polynomial coefficients $\Phi$ and $\Psi$ in the differential operator $\mathcal{L}$. This  question has been addressed  in  \cite{Loureiro2010} and partly  in \cite{LittleEvAlJacobi2007,LittleEvAlJacobi2000}. More precisely, explicit expressions for any integral power of $\mathcal{L}$ were obtained for each of the classical polynomial families, orthogonal with respect to a positive-definite measure in a series of papers by Everitt {\it et al.} (see \cite{LittleEvAlJacobi2007,LittleEvAlJacobi2000} and the references therein).  Meanwhile, in \cite{Loureiro2010} a similar study was carried out in a more coherent framework gathering the properties of all the classical families (not restricting to the positive-definite case, and therefore including, for example, the Bessel case). The key to attain such expressions were the Stirling numbers in the study of the cases of Hermite and Laguerre polynomials, whereas the cases of Bessel and Jacobi polynomials required the introduction of a new pair of numbers: the so-called  {$\alpha$-modified Stirling numbers} $\widehat{s}_\alpha(n,k)$ and $\widehat{S}_\alpha(n,k)$. These are best known as Jacobi-Stirling numbers (as called in \cite{LittleEvAlJacobi2007}), which can be defined as follows
$$ 
\prod_{i=0}^{n-1}(x-i(\alpha+i))=\sum_{k=0}^{n} \widehat{s}_\alpha(n,k)x^{k} 
\quad\text{and} \quad x^{n}=\sum_{k=0}^{n}\widehat{S}_\alpha (n,k)\prod_{i=0}^{i-1}(x-i(\alpha+i)), \ n\geqslant 0, 
$$ 
where $\alpha$ represents a complex number. Since their introduction, several properties of the Jacobi-Stirling numbers  and its companions including combinatorial interpretations have been established, see \cite{AndrewsEggeGawLittle2013, Egge2010,ZengGelineau, GLZ, LinZen14, Mongelli12}. 

In this paper, we obtain explicit expressions for any positive integer power of the $q$-differential operator $\mathcal{L}_q$ given in \eqref{Lq def} and the key to achieve this is via the introduction of a new set of numbers, that we call as $q$-Jacobi-Stirling numbers, to which we obtain several properties as well as a combinatorial interpretation. The results here obtained are the $q$-version to those in \cite{Loureiro2010} (and also in \cite{LittleEvAlJacobi2007,LittleEvAlJacobi2000}), as well as to those in \cite{AndrewsEggeGawLittle2013, Egge2010,ZengGelineau, GLZ,Miranian,Mongelli12}, since we provide here combinatorial interpretations to the arisen coefficients and eigenvalues. This study has the merit of addressing all the $q$-classical polynomial sequences as whole in a coherent framework that brings together generalisations of the $q$-differential equation \eqref{Lq def} and associated combinatorial interpretations. 

Our starting point is  Theorem \ref{Thm: 2k th order diff eq}, which 
gives an explicit expression of $q$-differential operators of arbitrary even order, $\mathcal{L}_{k;q}$ say, having the $q$-classical polynomials as eigenfunctions. Obviously, this sequence of operators have $\mathcal{L}_q$  as a particular (and simplest) case.  

\begin{thm}\label{Thm: 2k th order diff eq} Let $\{P_{n}\}_{n\geqslant 0}$ be a ${q}$-classical MOPS fulfilling \eqref{Lq def}. For any  integer $k\geqslant 0$,  the polynomial $y(x)=P_n(x)$  also  fulfils  
\begin{align}\label{2k order diff eq}
	\mathcal{L}_{k;q}[y](x) 
	:= \sum_{\nu=0}^{k} \Lambda_{k,\nu}(x;q) 
		\bigg(D_{q^{-1}}^{k-\nu} \circ D_{q}^{k}\ y \bigg)(q^{-\nu}x)  
	\ =  \ \Xi_{n}(k;q)\  y(x), 
%	\ = \
%		\left( \prod_{\sigma=1}^k 
%	 \chi_{n - \sigma}^{[\sigma]}	  \right) 
%		 y(x) 
\end{align}
where, for each $ \nu=0,1,\ldots, k$, 
$$
	\Lambda_{k,\nu}(x;q)  
	=\frac{q^{-(k-\nu)(\nu+1)}}{[k-\nu]_q!}  \ 
		\Big( \prod_{\sigma=1}^\nu 
	 \chi_{\sigma}^{[ k-\sigma]} 	  \Big) 
	 \ \Big(\prod_{\sigma=0}^{k-\nu-1}q^{-\sigma\deg\Phi } \Phi(q^{\sigma}x)\Big)
		D_{q}^{k-\nu} P_{k}(x) , 
$$
with $\quad 
	\chi_{n}^{[k]} 
	= q^{ -k\deg \Phi -1} [n]_{q^{-1}}  \Big(  [n+2k]_{q}  \frac{\Phi''(0)}{2} + z \Big) 
%		= \left\{ \begin{array}{lcl}  
%		 - [n]_{q^{-1}} q^{-(\deg \Phi )k}\ \Psi'(0) 
%		 	& \text{if} & \deg\Phi=0,1, \vspace{0.2cm}\\
%		~ [n]_{q^{-1}}q^{-2k-1}  \Big(  [n+2k]_{q} + z \Big) 
%			& \text{if} & \deg\Phi=2
%		\end{array}\right.
$,  $ \ z= -(\frac{\Phi''(0)}{2}  +q\Psi'(0)) $ \  and  
\begin{equation}\label{Xi n k expression}
\Xi_{n}(k;q) =  \prod_{\sigma=0}^{k-1}  \chi_{n-\sigma}^{[\sigma]}
%	 \prod_{\sigma=0}^{k-1}   \left\{
%	 	[n - \sigma]_{q^{-1}} \left( \left( [n - \sigma-1]_{q} + q^{-\sigma\deg\Phi} [2\sigma]_{q} \right) \frac{\Phi''(0)}{2} 
%		-  q^{-\sigma\deg\Phi}  \Psi'(0)\right) 
%				 	  \right\}
= \left\{\begin{array}{@{}l@{\ }l@{}}
	 \displaystyle \left(- q^{\frac{(k-1)(1- \deg \Phi)}{2}} \  \Psi'(0) \right)^k   
				\ \prod_{\sigma=0}^{k-1} \big( [n]_{q^{-1}} -[\sigma]_{q^{-1}} \big) 
	 	&\text{if }  \deg \Phi =0,1, \\
	\displaystyle q^{-\frac{k(k+1)}{2}}  \prod_{\sigma=0}^{k-1}   \Big(
					  [n]_{q^{-1}} \big( z + [n]_q  \big) 
					- [\sigma]_{q^{-1}} \big( z + [\sigma]_q  \big)    \Big)	
		&\text{if } \deg \Phi =2. \quad 
	 \end{array}\right.
\end{equation}
%for any $n\geqslant 0$.
\end{thm}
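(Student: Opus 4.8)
The plan is to prove the formula \eqref{2k order diff eq} by induction on $k$, using \eqref{Lq def} as the base case $k=1$ (with $k=0$ being the trivial identity $y=y$). The inductive mechanism I would use rests on the observation that applying $\mathcal{L}_q$ one more time to $P_n$ multiplies the eigenvalue by a factor that itself depends on $n$, and hence one must carefully control how $\mathcal{L}_q$ interacts with the shifted higher $q$-derivatives $\left(D_{q^{-1}}^{k-\nu}\circ D_q^{k}\,y\right)(q^{-\nu}x)$ already present in $\mathcal{L}_{k;q}$. The first step is therefore to establish the key commutation/Leibniz-type lemmas for $D_q$ and $D_{q^{-1}}$: how $D_q$ acts on a product $\Phi(x)g(x)$, how $D_q$ and $D_{q^{-1}}$ compose, and how a dilation $x\mapsto q^{-\nu}x$ conjugates these operators. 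These $q$-calculus identities (presumably collected in \S\ref{sec: preliminaries}) are what convert "apply $\mathcal{L}_q$ to the $k$-th order expression" into "a $(k+1)$-th order expression of the same shape," and they are the source of the explicit powers of $q$ and the shift arguments $q^{-\nu}x$ in $\Lambda_{k,\nu}$.

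Concretely, I would write $\mathcal{L}_{k+1;q}[y]$ in two ways: once directly from the claimed formula at level $k+1$, and once by applying $\mathcal{L}_q$ (in the form $\Phi\,D_q\circ D_{q^{-1}} - \Psi\,D_{q^{-1}}$, suitably dilated) to $\mathcal{L}_{k;q}[y]$ and using the eigenvalue recursion. Matching the two forces the recurrences
$$
\Lambda_{k+1,\nu}(x;q) \;\longleftrightarrow\; \Phi(q^{k-\nu}x)\,\Lambda_{k,\nu}(\cdot;q) \;+\; \chi_{?}\,\Lambda_{k,\nu-1}(\cdot;q),
$$
i.e.\ each coefficient at level $k+1$ is built from two coefficients at level $k$, one carrying an extra $\Phi$-factor and a power of $q$ from the dilation, the other carrying an extra eigenvalue factor $\chi^{[k-\nu]}_{\nu}$. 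One then checks that the closed form
$$
\Lambda_{k,\nu}(x;q)=\frac{q^{-(k-\nu)(\nu+1)}}{[k-\nu]_q!}\Bigl(\prod_{\sigma=1}^{\nu}\chi^{[k-\sigma]}_{\sigma}\Bigr)\Bigl(\prod_{\sigma=0}^{k-\nu-1}q^{-\sigma\deg\Phi}\Phi(q^{\sigma}x)\Bigr)D_q^{k-\nu}P_k(x)
$$
satisfies this two-term recurrence; the $q$-binomial-type identity $\frac{1}{[k-\nu]_q!}$ versus $\frac{1}{[k+1-\nu]_q!}$ together with the telescoping of the $\chi$-products is the combinatorial heart of this verification. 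The factor $D_q^{k-\nu}P_k(x)$ and its behaviour under $D_q$ (raising $k$ to $k+1$) must be tracked using the three-term-type relations for $q$-classical polynomials from Proposition~\ref{Prop qclassical}; this is where $\deg\Phi$ enters and splits the argument.

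Separately, and more routinely, I would verify the eigenvalue identity $\Xi_n(k;q)=\prod_{\sigma=0}^{k-1}\chi^{[\sigma]}_{n-\sigma}$ and its evaluation in \eqref{Xi n k expression}: the product form is immediate from iterating the one-step eigenvalue recursion $\mathcal{L}_q P_m = \chi_m P_m$ in its rescaled guise $\chi^{[k]}_n$, and the two closed forms (for $\deg\Phi\le 1$ and $\deg\Phi=2$) follow by substituting $\chi^{[k]}_n = q^{-k\deg\Phi-1}[n]_{q^{-1}}\bigl([n+2k]_q\tfrac{\Phi''(0)}{2}+z\bigr)$, pulling out the $q$-powers, and recognising $[n]_{q^{-1}}-[\sigma]_{q^{-1}}$ (resp.\ the quadratic-in-bracket expression) after the substitution $[n+2\sigma]_q$ is reorganised in terms of $[n]_q$ and $[\sigma]_q$. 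I expect the main obstacle to be bookkeeping: correctly propagating the dilation arguments $q^{-\nu}x$ and the precise powers of $q$ through the inductive step, since a single misplaced $q^{\pm 1}$ breaks the telescoping. The conceptual content is modest — it is entirely driven by $q$-Leibniz and the eigenvalue recursion — but making the closed form for $\Lambda_{k,\nu}$ match the recurrence on the nose will require a careful, essentially mechanical, but error-prone computation that is best organised by first normalising all operators to act at the base point $x$ (absorbing dilations into explicit $q$-powers) before comparing coefficients.
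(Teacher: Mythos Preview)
Your approach is genuinely different from the paper's, and as written it has a real gap.

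The paper does \emph{not} proceed by induction on $k$. Instead it works entirely in the dual space $\mathcal{P}'$ of linear functionals. The key object is the form $u_0^{[k]}$ associated to the $q$-classical sequence $\{P_n^{[k]}\}_{n\geqslant 0}$ of $k$-th $q$-derivatives (Corollary~\ref{cor: Pn k Dq classical}). The authors compute $D_q^{k}\bigl((D_q^{k}P_{n+k})\,u_0^{[k]}\bigr)$ in two ways: once by iterating the relation $D_q^{\nu}(P_n^{[k]}u_0^{[k]})=\varpi_n(k,k-\nu)\,P_{n+\nu}^{[k-\nu]}u_0^{[k-\nu]}$ all the way down to $u_0$, and once by the $q$-Leibniz rule for forms \eqref{qLeibniz f u}, which produces exactly the sum $\sum_{\nu}\Lambda_{k,\nu}(x;q)\,(D_{q^{-1}}^{k-\nu}\circ D_q^{k}P_{n+k})(q^{-\nu}x)\cdot u_0$. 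Regularity of $u_0$ then cancels the functional and yields the polynomial identity. All coefficients $\Lambda_{k,\nu}$ drop out at once; there is no recursion in $k$.

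The gap in your plan is the inductive step itself. Composing $\mathcal{L}_q$ with $\mathcal{L}_{k;q}$ gives eigenvalue $\chi_n\,\Xi_n(k;q)$ on $P_n$, whereas $\Xi_n(k+1;q)=\Xi_n(k;q)\cdot\chi_{n-k}^{[k]}$ by \eqref{Xi n k expression}. Since $\chi_n\neq\chi_{n-k}^{[k]}$ in general (for $\deg\Phi=2$ they differ already in the bracket arguments), $\mathcal{L}_{k+1;q}$ is \emph{not} $\mathcal{L}_q\circ\mathcal{L}_{k;q}$, and no dilation of $\mathcal{L}_q$ fixes this: the missing factor $\chi_{n-k}^{[k]}$ is the eigenvalue of the \emph{shifted} operator $\Phi_k D_q D_{q^{-1}}-\Psi_k D_{q^{-1}}$ acting on $P_{n-k}^{[k]}$, not of anything acting on $P_n$. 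Your proposed two-term recurrence $\Lambda_{k+1,\nu}\sim\Phi(\cdot)\Lambda_{k,\nu}+\chi_{?}\Lambda_{k,\nu-1}$ also fails structurally: in the closed form the product $\prod_{\sigma=1}^{\nu}\chi_\sigma^{[k-\sigma]}$ becomes $\prod_{\sigma=1}^{\nu}\chi_\sigma^{[k+1-\sigma]}$ when $k\mapsto k+1$, so \emph{every} superscript shifts, not just an endpoint. An inductive route is not impossible, but it would have to weave in the shifted operators from Corollary~\ref{cor: Pn k Dq classical} and track how $D_q$ intertwines $\mathcal{L}_{k;q}$ with the analogous operator for $\{P_n^{[1]}\}$; none of that is in your outline. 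The functional-analytic route the paper takes sidesteps all of this bookkeeping.
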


Observe that by taking $q\to 1$ in Theorem \ref{Thm: 2k th order diff eq}, we recover the results obtained in \cite[Theorems 2.1 and 2.2]{Loureiro2010}.

In some cases, the orthogonality measures associated to certain $q$-classical sequences $\{P_{n}\}_{n\geqslant 0}$ can be expressed via a weight function, so that there exists a function $W_q(x)$ possessing all the necessary properties with support in $(a,b)$, with $a<b$, possibly infinite, so that
$$
	\int_a^b P_n(x) P_m(x) W_q(x) dx = N_n \delta_{n,m}, \ \text{ with } \ N_n\neq0  , \ n,m\geqslant 0,  
$$
where the symbol $\delta_{n,m}$ stands for the {\it Kronecker delta symbol}.  If a $q$-classical sequence $\{P_{n}\}_{n\geqslant 0}$, orthogonal with respect to the weight function $W_q$, fulfils the $q$-differential equation \eqref{Lq def}, then $W_q(x)$ satisfies 
\begin{equation}\label{eq Uq}
	q^{-1} D_{q^{-1}} \left( \Phi(x) W_q (x)\right) + \Psi(x) W_q (x) = \lambda \ g(x), 
\end{equation}
where $\lambda$ is an arbitrary complex number and $g\neq 0$ is a locally integrable function with rapid decay representing the null form, as it is, for instance, the case of the Stieltjes function 
$
	g(x) =\exp(-x^{1/4}) \sin(x^{1/4}) \mathbbm{1}_{[0,+\infty]}(x)
%	 \left\{ 
%		\begin{array}{lcl}
%			0 ,& x\leqslant 0, \\
%			\exp(-x^{1/4}) \sin(x^{1/4}) ,& x>0. 
%		\end{array}
%		\right.
$ where $\mathbbm{1}_{A}(x)$ stands for the characteristic function of the set $A$: it equals $1$ if $x\in A$ and $0$ otherwise. (For more details we refer to \cite[p.81-82]{KherijiMar2002}.)
If this is the case, a $q$-self-adjoint version of  Theorem \ref{Thm: 2k th order diff eq} is obtained:  
\begin{cor}\label{Cor: Lkq via Uq} With the same assumptions as those in Theorem \ref{Thm: 2k th order diff eq}, if the $q$-classical sequence $\{P_{n}\}_{n\geqslant 0}$ is orthogonal with respect to the weight function $W_q(x)$,  then   the operator in \eqref{2k order diff eq} can be written as 
\begin{equation}\label{Lk via Uq exp1}
	\mathcal{L}_{k;q}[y](x) := 
		q^{-k}\big(W_{q}(x) \big)^{-1} \ 
		D_{q^{-1}}^{k}\left(
		\  \left(\prod_{\sigma=0}^{k-1} q^{-\sigma\deg\Phi} \Phi(q^{\sigma} x) \right) \ W_{q}(x) \bigl(D_{q}^{k} y(x)\bigr)   \right), 
\end{equation}
for any polynomial $y$, and has $P_n(x)$ as eigenfunctions.
\end{cor}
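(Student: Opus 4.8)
The plan is to obtain \eqref{Lk via Uq exp1} directly from the explicit formula \eqref{2k order diff eq} of Theorem~\ref{Thm: 2k th order diff eq}: expand the right-hand side of \eqref{Lk via Uq exp1} with the $q$-Leibniz rule and simplify the result by means of the Pearson-type equation \eqref{eq Uq}, of which only the polynomial content $q^{-1}D_{q^{-1}}(\Phi W_q)=-\Psi W_q$ is used (the null form contributes nothing to the polynomial identities that follow). As a warm-up and consistency check one first treats $k=1$: writing the $q$-Leibniz rule as $D_{q^{-1}}(uv)(x)=u(x)(D_{q^{-1}}v)(x)+(D_{q^{-1}}u)(x)\,v(q^{-1}x)$ with $u=\Phi W_q$ and $v=D_q y$, then dividing by $q\,W_q(x)$ and invoking \eqref{eq Uq} together with the elementary identities $D_qD_{q^{-1}}=q^{-1}D_{q^{-1}}D_q$ and $(D_q y)(q^{-1}x)=(D_{q^{-1}}y)(x)$, one recovers $q^{-1}W_q(x)^{-1}D_{q^{-1}}\big(\Phi W_q D_q y\big)(x)=\Phi(x)(D_qD_{q^{-1}}y)(x)-\Psi(x)(D_{q^{-1}}y)(x)=\mathcal{L}_q[y](x)$, which is \eqref{Lk via Uq exp1} for $k=1$ since $\mathcal{L}_{1;q}=\mathcal{L}_q$ and $\prod_{\sigma=0}^{0}q^{-\sigma\deg\Phi}\Phi(q^{\sigma}x)=\Phi(x)$.

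For general $k$, put $\Phi_{[k]}(x):=\prod_{\sigma=0}^{k-1}q^{-\sigma\deg\Phi}\Phi(q^{\sigma}x)$. Iterating the $q$-Leibniz rule gives
$$D_{q^{-1}}^{k}(fg)(x)=\sum_{\nu=0}^{k}{k\brack\nu}_{q^{-1}}\big(D_{q^{-1}}^{\nu}f\big)(x)\big(D_{q^{-1}}^{k-\nu}g\big)(q^{-\nu}x),$$
so applying this with $f=\Phi_{[k]}W_q$ and $g=D_q^{k}y$ turns the right-hand side of \eqref{Lk via Uq exp1} into $\sum_{\nu=0}^{k}q^{-k}W_q(x)^{-1}{k\brack\nu}_{q^{-1}}\big(D_{q^{-1}}^{\nu}(\Phi_{[k]}W_q)\big)(x)\,\big(D_{q^{-1}}^{k-\nu}\circ D_q^{k}y\big)(q^{-\nu}x)$, which already has the exact shape of the right-hand side of \eqref{2k order diff eq}. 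It then suffices to prove the coefficient identity
$$q^{-k}\,W_q(x)^{-1}{k\brack\nu}_{q^{-1}}\big(D_{q^{-1}}^{\nu}(\Phi_{[k]}W_q)\big)(x)=\Lambda_{k,\nu}(x;q),\qquad 0\le\nu\le k.$$
To establish this I would use a factorisation of the form $\Phi_{[k]}(x)=q^{-\nu(k-\nu)\deg\Phi}\,\Phi_{[k-\nu]}(x)\,\Phi_{[\nu]}(q^{\,k-\nu}x)$, the $q$-Rodrigues representation of $q$-classical sequences recalled in \S\ref{sec: preliminaries}, and the relation $D_q^{\,k-\nu}P_k=\tfrac{[k]_q!}{[\nu]_q!}\,P_\nu^{[k-\nu]}$ identifying $D_q^{\,k-\nu}P_k$ with the $\nu$-th element of the $(k-\nu)$-th associated $q$-classical sequence, whose weight function is proportional to $\Phi_{[k-\nu]}(x)W_q(x)$: peeling $\nu$ layers off $\Phi_{[k]}W_q$ by repeated use of \eqref{eq Uq} leaves $\Phi_{[k-\nu]}(x)$ times that Rodrigues polynomial, while the $\nu$ applications of \eqref{eq Uq} generate precisely the product $\prod_{\sigma=1}^{\nu}\chi_{\sigma}^{[k-\sigma]}$ of shifted eigenvalues. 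Collecting the surviving $q$-powers and $q$-factorials then reproduces $\Lambda_{k,\nu}(x;q)$.

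The main obstacle is exactly this last, largely bookkeeping, step: one must see that ${k\brack\nu}_{q^{-1}}$, the Rodrigues normalisation, the factor $[k]_q!/[\nu]_q!$ and the powers $q^{-(k-\nu)(\nu+1)}$ and $q^{-\nu(k-\nu)\deg\Phi}$ collapse to the compact expression for $\Lambda_{k,\nu}$, and that each use of \eqref{eq Uq} contributes the correctly shifted factor $\chi_{\sigma}^{[k-\sigma]}$; it is natural to carry this out by induction on $\nu$ (peeling one layer at a time). As an independent confirmation one may instead verify \eqref{Lk via Uq exp1} on the basis $\{P_n\}_{n\ge 0}$ of $\mathcal{P}$: the operator on the right-hand side of \eqref{Lk via Uq exp1} maps $\mathcal{P}$ into $\mathcal{P}$ (again by iterating \eqref{eq Uq}), is symmetric with respect to $W_q$ after the appropriate $q$-integration by parts (the boundary terms vanishing by the decay of $W_q$), and sends $P_n$ to a polynomial of degree $\le n$ that is $W_q$-orthogonal to $P_0,\dots,P_{n-1}$, hence to a scalar multiple of $P_n$; comparing leading coefficients identifies that scalar with $\Xi_n(k;q)$, in agreement with Theorem~\ref{Thm: 2k th order diff eq}.
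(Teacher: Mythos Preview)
Your plan is workable, but it takes a genuinely different and heavier route than the paper's. The paper does \emph{not} expand the right-hand side of \eqref{Lk via Uq exp1} by the $q$-Leibniz rule and then match the resulting terms against the $\Lambda_{k,\nu}(x;q)$. Instead it notices that the proof of Theorem~\ref{Thm: 2k th order diff eq} already produces, as a byproduct, the \emph{functional} identity (take $\nu=k$ in \eqref{D nu of Dk of Pn+k u0 k} and substitute $u_0^{[k]}=\zeta_k\bigl(\prod_{\sigma=0}^{k-1}\Phi_\sigma\bigr)u_0$ from Corollary~\ref{cor: Pn k Dq classical}):
\[
D_q^{k}\Bigl(\Bigl(\prod_{\sigma=0}^{k-1}\Phi_\sigma(x)\Bigr)\bigl(D_q^{k}P_n\bigr)(x)\,u_0\Bigr)=\Xi_n(k;q)\,P_n(x)\,u_0,\qquad n\geqslant 0.
\]
When $u_0$ is represented by $W_q$, this functional relation becomes the pointwise relation \eqref{gen Rod}, i.e.\ the right-hand side of \eqref{Lk via Uq exp1} applied to $P_n$ equals $\Xi_n(k;q)P_n$. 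Since $\{P_n\}_{n\geqslant0}$ is a basis of $\mathcal{P}$ and $\mathcal{L}_{k;q}[P_n]=\Xi_n(k;q)P_n$ by Theorem~\ref{Thm: 2k th order diff eq}, the two operators agree on a basis and hence coincide. In particular, the paper never carries out the term-by-term identification of $\Lambda_{k,\nu}$ that you flag as the ``main obstacle''; that coefficient identity is a \emph{consequence} of the corollary, not the mechanism of its proof.

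Your ``independent confirmation'' via symmetry and leading-coefficient comparison is closer in spirit to the paper's argument (both verify the operator identity on the basis $\{P_n\}$), but the paper reaches $\Xi_n(k;q)P_n$ directly from the dual-sequence machinery rather than through $q$-integration by parts and boundary-term decay. A further practical advantage of the functional route is that the Pearson equation $D_q(\Phi u_0)+\Psi u_0=0$ holds exactly for $u_0$, so the null-form term in \eqref{eq Uq}---which you have to dismiss by hand in your pointwise argument---simply never appears.
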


Further to this, we link integral powers of the operator $\mathcal{L}_{q}$ in \eqref{Lq def} with $\mathcal{L}_{k;q}$, via a set of numbers that we introduce and characterise, the so-called $q$-Jacobi-Stirling numbers (see Section \ref{sec: qJacStir}), together with the $q$-Stirling numbers. The characterisation of the $q$-Jacobi-Stirling numbers in Section \ref{sec: qJacStir} includes, among other things, a combinatorial interpretation. The reason for this lies on the fact that the bridge between powers of the operator $\mathcal{L}_{q}$ with $\mathcal{L}_{k;q}$ is tied to the relation between their corresponding eigenvalues $(\chi_n)^k$ and $\Xi_n(k;q)$, respectively. Observe that  $\chi_n$ are linear in $n$ and $\Xi_n(k;q)$ are essentially $q$-shifted factorials when  $\deg \Phi$ equals $0$ or $1$. Hence, the pair of $q$-Stirling numbers of first and second kind $( {c}_{q} (k,j), {S}_{q} (k,j)))_{j,k\geqslant0}$ allows us to bridge them. However, the eigenvalues $\chi_n$ become quadratic in $n$ and $\Xi_n(k;q)$ a product of quadratic polynomials in $n$ when  $\deg \Phi =2$. In order to link powers of $\chi_n$  to $\Xi_n(k;q)$ we introduce a new pair of numbers -- the \emph{$q$-Jacobi-Stirling numbers} of first and second kind, denoted as $( \Jc_{k}^j (z;q^{-1}), \JS_{k}^{j}(z;q^{-1}))_{j,k\geqslant0}$, both depending on a complex valued parameter $z$. With this we are able to show that: 

\begin{thm}\label{Prop: Lk} Let $k$ be a positive integer and $\mathcal{L}_q$ the linear differential operator defined in \eqref{Lq def}. The identities    
\begin{equation} \label{Lq k and Lqk Rel1}
 		\mathcal{L}_{q} ^k[f](x)
	= \left\{\begin{array}{lcl}
	\ds 		 \sum_{j=0}^{k}  {S}_{q^{-1}} (k,j) \,  q^{(\deg\Phi -1)\frac{j(j-1)}{2}} \, (-\Psi'(0))^{k-j}\,  	
				\mathcal{L}_{j;q}[f](x)
			& \text{if} & \deg\Phi=0,1, \\
	\ds 		   \sum_{j=0}^{k}  \JS_{k}^{j}(z;q^{-1}) \,  q^{\frac{j(j+1)}{2}-k}\,  \mathcal{L}_{j;q}[f](x)
			& \text{if} & \deg\Phi=2,
	\end{array}\right.
\end{equation} 
are valid for any polynomial $f$, where $ \mathcal{L}_{k;q}$ is the operator given in \eqref{2k order diff eq} and $z=-(1+q \Psi'(0))$. 
\end{thm}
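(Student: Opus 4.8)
The plan is to reduce the operator identity to a scalar identity between eigenvalues, using the fact that a $q$-classical MOPS is a basis of $\mathcal{P}$. Concretely, since $\{P_n\}_{n\geqslant 0}$ spans $\mathcal{P}$, it suffices to verify \eqref{Lq k and Lqk Rel1} for $f=P_n$ for every $n\geqslant 0$. By \eqref{Lq def} the left-hand side applied to $P_n$ is $(\chi_n)^k P_n(x)$, while by Theorem \ref{Thm: 2k th order diff eq} each term $\mathcal{L}_{j;q}[P_n](x)$ equals $\Xi_n(j;q) P_n(x)$. Thus the theorem is equivalent to the pair of purely numerical identities
\begin{equation*}
	(\chi_n)^k = \sum_{j=0}^{k} S_{q^{-1}}(k,j)\, q^{(\deg\Phi-1)\frac{j(j-1)}{2}} (-\Psi'(0))^{k-j}\, \Xi_n(j;q) \qquad (\deg\Phi=0,1),
\end{equation*}
\begin{equation*}
	(\chi_n)^k = \sum_{j=0}^{k} \JS_{k}^{j}(z;q^{-1})\, q^{\frac{j(j+1)}{2}-k}\, \Xi_n(j;q) \qquad (\deg\Phi=2),
\end{equation*}
to be proved for all $n\geqslant 0$, and since both sides are polynomials in the variable $[n]_{q^{-1}}$ (which takes infinitely many values as $n$ varies) it is enough to prove them as polynomial identities in an indeterminate.

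For the case $\deg\Phi\in\{0,1\}$, I would substitute the closed form $\Xi_n(j;q)=\bigl(-q^{\frac{(j-1)(1-\deg\Phi)}{2}}\Psi'(0)\bigr)^{j}\prod_{\sigma=0}^{j-1}\bigl([n]_{q^{-1}}-[\sigma]_{q^{-1}}\bigr)$ from \eqref{Xi n k expression}, together with $\chi_n = -[n]_{q^{-1}}\Psi'(0)$ from \eqref{eigenval Bohner chi n}. Writing $x:=[n]_{q^{-1}}$, the right-hand side becomes $\sum_j S_{q^{-1}}(k,j)\,(\text{powers of }q\text{ and }\Psi'(0))\,\prod_{\sigma=0}^{j-1}(x-[\sigma]_{q^{-1}})$; after checking that all the $q$- and $\Psi'(0)$-prefactors telescope to $(-\Psi'(0))^k$ times a single power of $q$ independent of $j$, the claim collapses to the defining relation of the $q$-Stirling numbers of the second kind, $x^k = \sum_{j} S_{q^{-1}}(k,j)\prod_{\sigma=0}^{j-1}(x-[\sigma]_{q^{-1}})$ — i.e. the connection formula between $\{x^k\}$ and the $q$-factorial basis $\{\prod_{\sigma<j}(x-[\sigma]_{q^{-1}})\}$. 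The bookkeeping of the exponents of $q$ is the only delicate point here, and it is routine.

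For $\deg\Phi=2$ the same mechanism applies but with the quadratic eigenvalue $\chi_n = [n]_{q^{-1}}\bigl(z+[n]_q\bigr)$ (after absorbing constants; here $z=-(1+q\Psi'(0))$) and $\Xi_n(k;q) = q^{-\frac{k(k+1)}{2}}\prod_{\sigma=0}^{k-1}\bigl(\chi_n\text{-type quadratic in }n\text{ minus the same in }\sigma\bigr)$. Setting $X := [n]_{q^{-1}}\bigl(z+[n]_q\bigr)$ and denoting by $X_\sigma$ the corresponding value with $n$ replaced by $\sigma$, the target identity becomes $X^k = \sum_{j=0}^{k} \JS_k^j(z;q^{-1})\prod_{\sigma=0}^{j-1}(X-X_\sigma)$, which I would simply \emph{take as the definition} of the $q$-Jacobi-Stirling numbers $\JS_k^j(z;q^{-1})$ (to be given in Section \ref{sec: qJacStir}); consistency requires only that $\{\prod_{\sigma<j}(X-X_\sigma)\}_{0\leqslant j\leqslant k}$ be a basis of the space of polynomials of degree $\leqslant k$ in $X$, which holds because the $X_\sigma$ are pairwise distinct for generic $q$ and $z$ (and then by continuity/polynomiality for all admissible $q,z$). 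The main obstacle is therefore not conceptual but notational: one must verify that the normalisation constants appearing in $\Xi_n(j;q)$ and in $\chi_n^{[k]}$ from Theorem \ref{Thm: 2k th order diff eq} combine with the prefactors $q^{\frac{j(j+1)}{2}-k}$ in \eqref{Lq k and Lqk Rel1} to produce exactly the monic-in-$X$ expansion that defines $\JS_k^j$, with no residual $n$-dependence — this is where a careful tracking of the powers of $q^{\deg\Phi}$ in the definitions of $\chi_n^{[k]}$ and $\Lambda_{k,\nu}$ is required, and I would carry it out by induction on $k$ using the three-term recurrence that $\JS_k^j$ inherits from the factor $(X-X_{j-1})$.
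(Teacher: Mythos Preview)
Your proposal is correct and follows essentially the same route as the paper's proof: reduce the operator identity to an eigenvalue identity by testing on the basis $\{P_n\}_{n\geqslant 0}$, then recognise that identity as the defining change-of-basis relation \eqref{q fact to monomials2} (with $x=[n]_{q^{-1}}$) for $\deg\Phi\leqslant 1$, respectively \eqref{qjs2} (with $x=[n]_{q^{-1}}(z+[n]_q)$ and $q\to q^{-1}$) for $\deg\Phi=2$. The paper does exactly this, only more tersely: it simply invokes those two defining identities and the basis property, without the auxiliary induction on $k$ you sketch at the end---that induction is unnecessary once you observe that the prefactors $q^{(\deg\Phi-1)\frac{j(j-1)}{2}}(-\Psi'(0))^{k-j}$ and $q^{\frac{j(j+1)}{2}-k}$ are precisely what is needed to strip $\Xi_n(j;q)$ in \eqref{Xi n k expression} down to the bare factorial $\prod_{\sigma=0}^{j-1}(x-[\sigma]_{q^{-1}})$, respectively $\prod_{\sigma=0}^{j-1}\bigl(x-[\sigma]_{q^{-1}}(z+[\sigma]_q)\bigr)$.
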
 

Here $\mathcal{L}_{q} ^k[f](x):=\mathcal{L}_{q}\left[ \mathcal{L}_{q}^{k-1}[f] \right](x)$ for $k=1,2,\ldots,$ with the convention $\mathcal{L}_{q} ^0[f](x):=f(x)$. 
The identities \eqref{Lq k and Lqk Rel1} can be reversed to obtain the following pair of reciprocal relations. 

\begin{cor}\label{Cor: reciprocal identities} For any integer $k$ and any polynomial $f$ we have 
\begin{equation}
		\mathcal{L}_{k;q}[f](x)
	= \left\{\begin{array}{lcl}
	\ds 		q^{(1-\deg\Phi)\frac{k(k-1)}{2}}   \sum_{j=0}^{k}  {c}_{q^{-1}} (k,j)  \, (-\Psi'(0))^{k-j}\,  
			\mathcal{L}_{q} ^j[f](x)
			& \text{if} & \deg\Phi=0,1, \\
	\ds 		   \sum_{j=0}^{k}   \Jc_{k}^j (z;q^{-1}) \,  q^{j-\frac{k(k+1)}{2}}\,  
				\mathcal{L}_{q} ^j[f](x)
			& \text{if} & \deg\Phi=2.
	\end{array}\right.
\end{equation}
\end{cor}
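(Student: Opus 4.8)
The plan is to read off the reciprocal identities from Theorem~\ref{Prop: Lk} by inverting a triangular change of basis. Concretely, I would substitute \eqref{Lq k and Lqk Rel1} into the right-hand side of the claimed identity, interchange the two (finite) summations, and collapse the resulting inner sum using the orthogonality relations
\[
\sum_{j} {c}_{q^{-1}}(k,j)\,{S}_{q^{-1}}(j,i) = \delta_{k,i}
\qquad\text{and}\qquad
\sum_{j} \Jc_{k}^{j}(z;q^{-1})\,\JS_{j}^{i}(z;q^{-1}) = \delta_{k,i}
\]
between the two kinds of $q$-Stirling numbers and between the two kinds of $q$-Jacobi-Stirling numbers. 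The first is the classical inversion formula for the $q$-Stirling numbers; the second is part of the characterisation of the $q$-Jacobi-Stirling numbers carried out in Section~\ref{sec: qJacStir}. Since $k$ is fixed, all sums are finite and all operators involved are genuine polynomial $q$-differential operators, so the manipulations are purely formal.

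Take first $\deg\Phi=2$. Inserting the second branch of \eqref{Lq k and Lqk Rel1} into $\sum_{j}\Jc_{k}^{j}(z;q^{-1})\,q^{\,j-k(k+1)/2}\,\mathcal{L}_q^{\,j}[f](x)$ and relabelling, the auxiliary powers $q^{\,j}$ and $q^{-j}$ coming respectively from the corollary and from \eqref{Lq k and Lqk Rel1} cancel, and one is left with
\[
q^{-k(k+1)/2}\sum_{i}\Bigg(\sum_{j}\Jc_{k}^{j}(z;q^{-1})\,\JS_{j}^{i}(z;q^{-1})\Bigg)q^{\,i(i+1)/2}\,\mathcal{L}_{i;q}[f](x).
\]
The inner sum is $\delta_{k,i}$, so only $i=k$ survives and the remaining powers $q^{-k(k+1)/2}$ and $q^{\,k(k+1)/2}$ cancel, giving $\mathcal{L}_{k;q}[f](x)$. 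The case $\deg\Phi=0,1$ is identical in structure: substituting the first branch of \eqref{Lq k and Lqk Rel1}, the factors $(-\Psi'(0))^{k-j}$ and $(-\Psi'(0))^{j-i}$ multiply to $(-\Psi'(0))^{k-i}$, the powers $q^{(1-\deg\Phi)k(k-1)/2}$ and $q^{(\deg\Phi-1)i(i-1)/2}$ combine, the inner sum $\sum_{j}{c}_{q^{-1}}(k,j)\,{S}_{q^{-1}}(j,i)$ forces $i=k$, and everything reduces to $\mathcal{L}_{k;q}[f](x)$.

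All the bookkeeping of powers of $q$ and of $-\Psi'(0)$ telescopes by design, so there is no computational obstacle at this point; the real content is the orthogonality of the $q$-Jacobi-Stirling numbers, which is why the hard part of this corollary is effectively done in Section~\ref{sec: qJacStir}. One could equivalently argue at the level of eigenvalues: $\{P_n\}_{n\geqslant 0}$ is a basis of $\mathcal{P}$ and every $\mathcal{L}_{j;q}$ (hence every $\mathcal{L}_q^{\,j}$) acts diagonally on it, so it is enough to verify the identities on $P_n$ for all $n$; after the substitutions $t=[n]_{q^{-1}}$ (for $\deg\Phi=0,1$) and $w=q\,\chi_n=[n]_{q^{-1}}(z+[n]_q)$ (for $\deg\Phi=2$), together with the expression \eqref{Xi n k expression} for $\Xi_n(j;q)$, the identity of Theorem~\ref{Prop: Lk} and the asserted one become precisely the two mutually inverse expansions that define the second- and first-kind $q$-(Jacobi-)Stirling numbers.
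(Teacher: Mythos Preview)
Your argument is correct and coincides with the paper's own (very terse) justification: the paper simply says that the corollary follows ``by reversing \eqref{Lq k and Lqk Rel1} using the $q$-Stirling or $q$-Jacobi-Stirling numbers of the first kind (or evoking analogous arguments to those in the proof of Theorem~\ref{Prop: Lk})'', which is exactly your substitution-and-orthogonality computation, with your eigenvalue remark matching the parenthetical alternative. You have merely supplied the bookkeeping that the paper omits.
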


 The outline of the manuscript reads as follows. We begin with the characterisation of the \emph{$q$-Jacobi-Stirling numbers} in Section \ref{sec: qJacStir}, providing their properties in \S \ref{subsec: properties qJS}, which are  also set in comparison to those of  the $q$-Stirling numbers and of the Jacobi-Stirling. For this reason, the proofs are either brief or omitted. A combinatorial interpretation is given in \S\ref{subsec: combinatorial} and, at last, \S\ref{subsec: generalisation} is devoted to symmetric generalisations on this new pair of numbers. In Section \ref{Sec: qDiff eq} we revise the core properties of the $q$-classical polynomials and we prove Theorem \ref{Thm: 2k th order diff eq}, Corollary \ref{Cor: Lkq via Uq}, Theorem  \ref{Prop: Lk} and Corollary \ref{Cor: reciprocal identities} given above. Finally in Section \ref{Sec:examples}, we illustrate these results with a detailed analysis of the cases of the Al-Salam-Carlitz, the Stieltjes-Wigert and the Little $q$-Jacobi polynomials.

 %%%%%%%%%%%%%%%%%%%%%%%%%%%%%%
\section{The $q$-Jacobi-Stirling numbers}\label{sec: qJacStir}
%%%%%%%%%%%%%%%%%%%%%%%%%%%%%%%

We address the problem of relating powers of a (complex) number $x$ and a corresponding $q$-factorial, and therefore the introduction of $q$-analogs of the Stirling numbers along with their modifications (also known as Jacobi-Stirling numbers).  

The sequence of factorials $
	\left\{  \prod\limits_{i=0}^{n-1} \left(x- [i]_q\right)  \right\}_{n\geqslant 0}
$, 
alike the  sequence of monomials $\{x^n \}_{n\geqslant 0}$, forms a basis  of the vector space of polynomials, here denoted as $\mathcal{P}$. Thus, there is a pair of sequences of numbers bridging each of these $q$-factorials of a complex number $x$ and its corresponding powers -- these are the so-called $q$-Stirling numbers, here denoted as $(c_q(n,k),S_q(n,k))_{0\leqslant k\leqslant n}$. Precisely, for each non-negative integer $n$, the pair $(c_q(n,k),S_q(n,k))_{0\leqslant k\leqslant n}$ (see \cite{Gould}) is such that
\begin{align} 
	 \prod_{i=0}^{n-1} \left(x- [i]_q\right)  = \sum_{k=0}^n (-1)^{n-k}c_q(n,k) x^{k},\label{q fact to monomials1}\\
%\quad  \text{ and } \quad  
	x^{n}  = \sum_{k=0}^n S_q(n,k)   \prod_{i=0}^{k-1} \left(x- [i]_q\right) . \label{q fact to monomials2}
\end{align}
A recurrence relation fulfilled by this pair of $q$-Stirling numbers can be readily deduced from the  identity $x \prod\limits_{i=0}^{n-1} \left(x- [i]_q\right) =\prod\limits_{i=0}^{n} \left(x- [i]_q\right) + [n]_q \prod\limits_{i=0}^{n-1} \left(x- [i]_q\right)$, which goes as follows: 
\begin{align} 
 &	c_q(n+1,k+1) = c_q(n,k)  + [n]_q  c_q(n,k+1)  , \label{q Stirling 1st}\\
 &	S_q(n+1,k+1) = S_q(n,k)  + [k+1]_q  S_q(n,k+1) , \label{q Stirling 2nd}
\end{align}
for $0\leqslant k \leqslant n$ with 
$c_q(n,k) =  S_q(n,k) = 0 ,\ \text{ if } k \notin \{1,\ldots, n\}, 
	\ \text{ and } \ c_q(0,0) =  S_q(0,0) =1. 
$
Naturally, the standard pair of Stirling numbers is recovered from $(c_q(n,k),S_q(n,k))_{0\leqslant k\leqslant n}$ when  $q\to1$. 

As it will soon become clear the connection between the operators $\mathcal{L}_{k;q}$ and $\mathcal{L}_q^k$ will be transferred to the problem of establishing inverse relations between the following sequence of factorials, depending on the prescribed parameter $z$ (possibly complex), 
\begin{equation}\label{set mod factorials}
	\left\{\prod_{i=0}^{n-1} \left(x- [i]_q \big(z+[i]_{q^{-1}} \big)\right)  \right\}_{n\geqslant 0}
\end{equation}
and the sequence of monomials. Straightforwardly, the set \eqref{set mod factorials} is a basis of $\mathcal{P}$. For this reason, there exists a pair of coefficients,  $(\qJc{n}{k}{z},\qJS{n}{k}{z})_{0\leqslant k\leqslant n}$ say, performing the change of basis to $\{z^n\}_{n\geqslant 0}$. We will refer to this pair of coefficients as the 
\emph{$q$-Jacobi-Stirling numbers} defined through 
 \begin{gather} 	
\prod_{i=0}^{n-1} \left(x- [i]_q \big(z+[i]_{q^{-1}} \big)\right) = \sum_{k=0}^n (-1)^{n-k} \qJc{n}{k}{z} \, x^{k}, \ n\geqslant0, 
\label{qjs1} \\
	x^{n}  = \sum_{k=0}^n  \qJS{n}{k}{z} \prod_{i=0}^{k-1} \left(x- [i]_q \big(z+[i]_{q^{-1}}\big)\right),\ n\geqslant0.
	\label{qjs2} 
\end{gather}

\begin{remark}
A $y$-version of the  $q$-Stirling numbers was introduced in 
\cite[Definition~3]{KasStanZeng} in order to compute  the moments of  some rescaled Al-Salam-Chihara polynomials.  It is easy to see that the latter generalized Stirling numbers are a rescaled version of the 
Jacobi-Stirling numbers. For example, the $y$-version of the $q$-Stirling numbers of the second kind $S_q(n,k,y)$ are defined by
\begin{align}\label{y-version}
x^n=\sum_{k=0}^n S_q(n,k,y)\prod_{j=0}^{k-1}(x-[j]_q(1-yq^{-j})).
\end{align}
It follows from \eqref{qjs2} and \eqref{y-version}  that $\JS_n^k(z;q)=S_q(n,k,y^{-1})y^{n-k}(1-q^{-1})^k$ with $z=\frac{y-1}{1-q^{-1}}$.
\end{remark}

Bearing in mind the identity fulfilled by the factorials in \eqref{set mod factorials}
\begin{eqnarray*}
	x \ \prod_{i=0}^{k-1} \left(x- [i]_q \big(z+ [i]_{q^{-1}} \big)\right) 
		&=& \prod_{i=0}^{k} \left(x- [i]_q \big(z + [i]_{q^{-1}} \big)\right) \\
		&& + [k]_q\Big(z + [k]_{q^{-1}}\Big) 
			\prod_{i=0}^{k-1} \left(x- [i]_q \big(z+ [i]_{q^{-1}} \big)\right), 
\end{eqnarray*}
we readily deduce the following triangular recurrence relations fulfilled by these two sets of numbers 
\begin{align}
 &	\qJc{n+1}{k+1}{z} =  \qJc{n}{k}{z}
 			+ [n]_q \Big(z+[n]_{q^{-1}} \Big) \qJc{n}{k+1}{z} , 
 				\label{mod q Stirling 1st}\\
 &	\qJS{n+1}{k+1}{z} = \qJS{n}{k}{z})  
 			+ [k+1]_q  \Big(z+[k+1]_{q^{-1}} \Big) \qJS{n}{k+1}{z} ,  			\label{mod q Stirling 2nd}
\end{align}
for $0\leqslant k \leqslant n$, with 
$\qJc{n}{k}{z} =  \qJS{n}{k}{z}= 0 ,\ \text{ if } k \notin \{1,\ldots, n\},$ and $ \qJc{0}{0}{z} =   \qJS{0}{0}{z} =1$.

The pair \eqref{qjs1}-\eqref{qjs2} represents a pair of inverse relations and we have 
$$
	\sum_{j\geqslant 0} \qJc{n}{j}{z} \qJS{j}{k}{z}
	=	\sum_{j\geqslant 0} \qJS{n}{j}{z}\qJc{j}{k}{z}= \delta_{n,k}, \ n,k\geqslant 0,
$$
as it can be derived directly from their definition. 
%%%%%%%%
\subsection{Properties of the $q$-Jacobi-Stirling numbers}\label{subsec: properties qJS}

An explicit expression for the $q$-Jacobi-Stirling numbers of second kind $\qJc{n}{k}{z}$ can be obtained directly from the Newton interpolation formula for a polynomial $f$ that we next recall. 
\begin{lem}[Newton's interpolation formula] Let $b_0, b_1, \ldots, b_{n-1}$ be distinct numbers. 
Then, for any polynomial $f$ of degree less than or equal to $n$ we have 
\begin{align}\label{newton}
f(x)=\sum_{j=0}^n \left( \sum_{r=0}^j\frac{f(b_r)}{\prod_{0\leqslant k\leqslant j, \;k\neq r} (b_r-b_k)} \right) \prod_{i=0}^{j-1} (x-b_i). 
\end{align}
\end{lem}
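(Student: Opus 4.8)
The plan is to prove Newton's interpolation formula by a direct argument using the basis of divided factorials $\prod_{i=0}^{j-1}(x-b_i)$. First I would note that the polynomials $\pi_j(x):=\prod_{i=0}^{j-1}(x-b_i)$ for $j=0,1,\ldots,n$ (with $\pi_0(x):=1$) form a basis of the space of polynomials of degree at most $n$, since $\deg \pi_j = j$ and leading coefficient $1$. Hence any polynomial $f$ with $\deg f\leqslant n$ can be written uniquely as $f(x)=\sum_{j=0}^n c_j \,\pi_j(x)$ for suitable constants $c_j$, and it remains to identify $c_j$ with the bracketed divided-difference sum $\sum_{r=0}^j f(b_r)\big/\prod_{0\leqslant k\leqslant j,\,k\neq r}(b_r-b_k)$.

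The cleanest route is to recognise the bracketed coefficient as the $j$-th divided difference $f[b_0,b_1,\ldots,b_j]$, for which the symmetric formula $f[b_0,\ldots,b_j]=\sum_{r=0}^j \frac{f(b_r)}{\prod_{k\neq r}(b_r-b_k)}$ is classical. I would then argue by induction on $n$: evaluating the claimed identity at $x=b_0$ forces $c_0=f(b_0)$, which matches the formula; then $f(x)-c_0$ vanishes at $b_0$, so $f(x)-f(b_0)=(x-b_0)g(x)$ for a polynomial $g$ of degree at most $n-1$, and applying the inductive hypothesis to $g$ with nodes $b_1,\ldots,b_n$ yields the result after checking that the divided-difference coefficients telescope correctly via the recursion $f[b_0,\ldots,b_j]=\frac{f[b_1,\ldots,b_j]-f[b_0,\ldots,b_{j-1}]}{b_j-b_0}$. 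Alternatively, and perhaps more economically for this paper, one can simply verify the identity by evaluating both sides at $x=b_0,b_1,\ldots,b_n$: the left side gives $f(b_i)$, and on the right side the factor $\prod_{i'=0}^{j-1}(x-b_{i'})$ vanishes at $x=b_i$ for all $j>i$, so the right side at $x=b_i$ collapses to a sum over $j\leqslant i$ whose value one checks equals $f(b_i)$; since two polynomials of degree at most $n$ agreeing at $n+1$ distinct points coincide, the identity follows.

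The main obstacle, such as it is, is the bookkeeping in showing that the truncated right-hand side at $x=b_i$ indeed sums to $f(b_i)$ — equivalently, establishing the standard fact that $\sum_{j=0}^i f[b_0,\ldots,b_j]\,\pi_j(b_i)=f(b_i)$. This is precisely the statement that the Newton form interpolates correctly, and it is most transparently handled by the induction sketched above rather than by a brute-force expansion. Since this is a textbook identity, I would keep the proof short, citing the standard theory of divided differences and giving only the inductive step, which is the only part with any content.

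Given the paper's stated convention that ``the proofs are either brief or omitted,'' I expect the actual write-up to either cite a standard reference for Newton interpolation or present only the two-line uniqueness-plus-evaluation argument; I would follow suit and keep the exposition minimal, emphasising that \eqref{newton} is exactly the change-of-basis expansion of $f$ in the factorial basis $\{\pi_j\}$, which is the form in which it will be applied to deduce the explicit expression for $\qJc{n}{k}{z}$.
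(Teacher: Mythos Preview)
Your anticipation is correct: the paper does not prove this lemma at all. It is stated as the classical Newton interpolation formula and immediately applied (with $f(x)=x^n$ and $b_k=[k]_q([k]_{q^{-1}}+z)$) to extract the explicit expression \eqref{qJS2F} for $\qJS{n}{j}{z}$. Your sketched arguments are all standard and correct, but none of them appears in the paper; there is nothing to compare against beyond noting that the authors treat the result as well known.
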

From \eqref{newton}, with $f(x)=x^n$ and $b_{k}=[k]_q([k]_{q^{-1}}+z)$, and \eqref{qjs2} we derive immediately the following result: 
\begin{prop} For $0\leqslant j\leqslant n$ we have 
\begin{align}\label{qJS2F}
	\qJS{n}{j}{z}=\sum_{r=0}^j (-1)^{j-r} 
	\frac{q^{-{r\choose 2}-r(j-r)}\bigl([r]_q([r]_{q^{-1}}+z)\bigr)^n}
	{[r]_q![j-r]_q!\prod\limits_{0\leqslant k\leqslant j,\; k\neq r}(z+[k+r]_{q^{-1}})}.
\end{align}
\end{prop}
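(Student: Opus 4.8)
The plan is to apply Newton's interpolation formula directly, with the two ingredients indicated just before the statement. Take $f(x)=x^n$ and the interpolation nodes $b_k=[k]_q([k]_{q^{-1}}+z)$ for $k=0,1,\ldots,n$; these are distinct (as elements of the basis-defining factorials in \eqref{set mod factorials}, or one checks $[k]_q([k]_{q^{-1}}+z)$ is strictly increasing in $k$ for generic $q,z$), so \eqref{newton} applies and yields
$$
x^n=\sum_{j=0}^n\left(\sum_{r=0}^j\frac{b_r^{\,n}}{\prod_{0\leqslant k\leqslant j,\;k\neq r}(b_r-b_k)}\right)\prod_{i=0}^{j-1}(x-b_i).
$$
Comparing this with the defining relation \eqref{qjs2}, and using that $\{\prod_{i=0}^{j-1}(x-b_i)\}_{j\geqslant0}$ is a basis, I read off
$$
\qJS{n}{j}{z}=\sum_{r=0}^j\frac{b_r^{\,n}}{\prod_{0\leqslant k\leqslant j,\;k\neq r}(b_r-b_k)}.
$$
So the entire content of the proposition reduces to the algebraic identity
$$
\prod_{0\leqslant k\leqslant j,\;k\neq r}(b_r-b_k)
=(-1)^{j-r}\,q^{{r\choose 2}+r(j-r)}\,[r]_q!\,[j-r]_q!\,\prod_{0\leqslant k\leqslant j,\;k\neq r}(z+[k+r]_{q^{-1}}),
$$
after which \eqref{qJS2F} follows by substituting $b_r^{\,n}=\bigl([r]_q([r]_{q^{-1}}+z)\bigr)^n$ and moving the power of $q$ to the numerator.

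The heart of the matter is therefore the factorization of the difference $b_r-b_k=[r]_q([r]_{q^{-1}}+z)-[k]_q([k]_{q^{-1}}+z)$. First I would expand: since $[m]_q[m]_{q^{-1}}=\frac{(q^m-1)(q^{-m}-1)}{(q-1)(q^{-1}-1)}=\frac{(1-q^m)(1-q^{-m})}{(1-q)(1-q^{-1})}$, a short computation gives $[r]_q[r]_{q^{-1}}-[k]_q[k]_{q^{-1}}=([r]_q-[k]_q)([r]_{q^{-1}}+[k]_{q^{-1}}+c)$ for an appropriate constant, or more usefully one verifies the clean identity
$$
b_r-b_k=\big([r]_q-[k]_q\big)\big(z+[r]_{q^{-1}}+[k]_{q^{-1}}+\text{(correction)}\big)
$$
and then recognizes $[r]_q-[k]_q=q^k[r-k]_q$ for $r\geqslant k$ (and $=-q^r[k-r]_q$ for $k>r$), together with the fact that the "$z+$" factor collapses to $z+[k+r]_{q^{-1}}$ up to an explicit power of $q$. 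Concretely I expect $b_r-b_k=q^{\min(k,r)}[\,|r-k|\,]_q\cdot q^{-\max(k,r)}(z+[k+r]_{q^{-1}})\cdot(\text{unit})$; I would pin down the exact power of $q$ by checking the two extreme monomials in $r,k$, or by the cleaner route of writing everything in terms of $q^{\pm r},q^{\pm k}$ and factoring the resulting Laurent polynomial in $q^k$ (it is quadratic in $q^k$ with roots at $q^k=q^r$ and $q^k=q^{-r}\cdot$unit, whence the two linear factors).

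Once the single-difference factorization is in hand, the product over $k\in\{0,\ldots,j\}\setminus\{r\}$ splits as $\prod_{k<r}(b_r-b_k)\cdot\prod_{k>r}(b_r-b_k)$: the first yields $\prod_{k=0}^{r-1}q^{k}[r-k]_q=q^{\binom r2}[r]_q!/[0]_q!$-type product giving $q^{\binom r2}[r]_q!$, the second yields $\prod_{k=r+1}^{j}(-q^r[k-r]_q)=(-1)^{j-r}q^{r(j-r)}[j-r]_q!$, and the collected "$z+$" factors assemble into $q^{(\text{something})}\prod_{k\neq r}(z+[k+r]_{q^{-1}})$ — the bookkeeping of these $q$-powers must exactly reproduce the exponent $-\binom r2-r(j-r)$ appearing in \eqref{qJS2F} after it is flipped to the numerator. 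I expect the main obstacle to be purely this $q$-power bookkeeping: keeping the exponents of $q$ straight through the splitting of $b_r-b_k$ into its $[\,\cdot\,]_q$ part and its $z+[\,\cdot\,]_{q^{-1}}$ part, and through the two product ranges. A sanity check at $q\to1$ (where the formula must reduce to the known $\widehat S_\alpha(n,j)$ expression with $\alpha\leftrightarrow z$) and at small $j$ (say $j=0,1,2$) would confirm the constants before committing to the general computation. Since the paper explicitly says proofs in this subsection are "either brief or omitted," I would present only the reduction via \eqref{newton} and \eqref{qjs2} plus the key single-difference factorization, leaving the elementary power-of-$q$ accounting to the reader.
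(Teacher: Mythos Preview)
Your approach is exactly the paper's: apply Newton's interpolation \eqref{newton} with $f(x)=x^n$ and $b_k=[k]_q([k]_{q^{-1}}+z)$, then compare with \eqref{qjs2}. The paper stops there (``we derive immediately''); you go on to outline the denominator computation, which is the right instinct.

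The only weakness is that you do not actually pin down the single-difference factorization, hedging with ``(correction)'' and ``(unit)'' factors. In fact there is no correction at all: a direct calculation gives the exact identity
\[
b_r-b_k=\bigl([r]_q-[k]_q\bigr)\bigl(z+[r+k]_{q^{-1}}\bigr),
\]
since $[r]_q[r]_{q^{-1}}-[k]_q[k]_{q^{-1}}=\dfrac{(q^k-q^r)(1-q^{-r-k})}{(1-q)(1-q^{-1})}=([r]_q-[k]_q)[r+k]_{q^{-1}}$. With this in hand, the product over $k\neq r$ separates cleanly: the factors $z+[r+k]_{q^{-1}}$ give exactly the product in the denominator of \eqref{qJS2F}, while
\[
\prod_{k=0}^{r-1}([r]_q-[k]_q)=\prod_{k=0}^{r-1}q^k[r-k]_q=q^{\binom{r}{2}}[r]_q!,\qquad
\prod_{k=r+1}^{j}([r]_q-[k]_q)=(-1)^{j-r}q^{r(j-r)}[j-r]_q!,
\]
using $[r]_q-[k]_q=q^{\min(r,k)}\,\mathrm{sgn}(r-k)\,[\,|r-k|\,]_q$. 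This produces precisely $(-1)^{j-r}q^{\binom{r}{2}+r(j-r)}[r]_q!\,[j-r]_q!$, so no further ``$q$-power bookkeeping'' is needed. Once you write down the clean factorization of $b_r-b_k$, the formula \eqref{qJS2F} is immediate.
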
 

\begin{remark}
By letting $q\to 1$, the pair $(\qJc{n}{k}{z},\qJS{n}{k}{z})_{0\leqslant k\leqslant n}$ reduces to the Jacobi-Stirling numbers  in \cite{LittleEvAlJacobi2007,ZengGelineau,Mongelli12} and the relation \eqref{qJS2F}  reduces to \cite[(4.4)]{LittleEvAlJacobi2007}.
\end{remark}

For each integer $k\geqslant 1$, it follows from the recurrence relation \eqref{mod q Stirling 2nd} that 
$$
	\sum_{n\geqslant k}\JS_n^{k}(z;q)x^n
	= \frac{x}{1-[k]_q ([k]_{q^{-1}}+z)} \sum_{n\geqslant k-1}\JS_n^{k-1}(z;q)x^n
$$
and therefore 
\begin{align*}
&& \sum_{n\geqslant k}\qJS{n}{k}{z} x^n = \prod_{i=1}^k\frac{x}{1-[i]_q([i]_{q^{-1}}+z)x}.
\end{align*}

%From  \eqref{eqlnk1}-\eqref{equnk}, we easily derive  the following result.
%

\begin{thm} \label{thm1} Let $n,k$ be positive integers with $n \geqslant k$.
The Jacobi-Stirling numbers $\JS_n^k(z,q)$ and $\Jc_{n}^{k}(z,q)$
are  polynomials in $z$ of degree $n-k$ with  coefficients in $\N[q,q^{-1}]$.
Moreover, if
\begin{align}\label{eq:defa}
\JS_{n}^{k}(z;q)&=a_{n,k}^{(0)}(q)+a_{n,k}^{(1)}(q)z+\cdots +a_{n,k}^{(n-k)}(q)z^{n-k},\\
\Jc_{n}^{k}(z;q)&=b_{n,k}^{(0)}(q)+b_{n,k}^{(1)}(q)z+\cdots +b_{n,k}^{(n-k)}(q)z^{n-k},\label{eq:defb}
\end{align}
then
$$%\begin{array}{llll}
 a_{n,k}^{(n-k)}=S_q(n,k),\quad %a_{n,k}^{(0)}=U(n,k), \quad
b_{n,k}^{(n-k)}=c_q(n,k). % \quad b_{n,k}^{(0)}=|u(n,k)|.
%\LS(n,k)&=\displaystyle \sum_{i=0}^{n-k}a_{n,k}^{(i)},\quad
%&|\ls(n,k)|&=\displaystyle \sum_{i=0}^{n-k}b_{n,k}^{(i)}.
%\end{array}
$$
\end{thm}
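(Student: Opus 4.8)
The plan is to prove all three assertions --- polynomiality in $z$, the degree count $n-k$, and the positivity of the coefficients --- simultaneously by induction on $n$, using the triangular recurrences \eqref{mod q Stirling 1st} and \eqref{mod q Stirling 2nd}. First I would deal with $\JS_n^k(z;q)$. The base cases $\JS_k^k(z;q)=1$ ($n=k$) and the boundary values $\JS_n^k=0$ for $k\notin\{1,\dots,n\}$ are immediate, and $1$ is a degree-$0$ polynomial in $z$ with coefficient $1=S_q(k,k)\in\N[q,q^{-1}]$. For the inductive step, I would rewrite \eqref{mod q Stirling 2nd} as
\begin{align*}
\JS_{n+1}^{k+1}(z;q)=\JS_{n}^{k}(z;q)+\bigl([k+1]_q[k+1]_{q^{-1}}+z\,[k+1]_q\bigr)\,\JS_{n}^{k+1}(z;q).
\end{align*}
By the induction hypothesis $\JS_n^k(z;q)$ has degree $n-k$ and $\JS_n^{k+1}(z;q)$ has degree $n-k-1$ in $z$; multiplying the latter by the degree-$1$ polynomial $z[k+1]_q+[k+1]_q[k+1]_{q^{-1}}$ produces a polynomial of degree exactly $n-k$ (the leading term being $[k+1]_q\,S_q(n,k+1)\,z^{n-k}$ by hypothesis, which is nonzero), and adding $\JS_n^k(z;q)$, also of degree $n-k$, preserves degree $n-k=(n+1)-(k+1)$ provided the top coefficients do not cancel. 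Here is the crucial point that makes everything work: since all coefficients involved lie in $\N[q,q^{-1}]$ --- i.e. are polynomials in $q,q^{-1}$ with \emph{nonnegative} integer coefficients, and $[m]_q=1+q+\cdots+q^{m-1}$, $[m]_{q^{-1}}$ are such --- no cancellation of the leading term is possible, and indeed every coefficient of the sum stays in $\N[q,q^{-1}]$. This settles polynomiality, the degree, and membership of all coefficients in $\N[q,q^{-1}]$ for the numbers of the second kind.

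Next, reading off the coefficient of $z^{n-k}$ on both sides of the displayed recurrence gives
\begin{align*}
a_{n+1,k+1}^{(n-k)}(q)=a_{n,k+1}^{(n-k-1)}(q)\,[k+1]_q,
\end{align*}
since the contribution of $\JS_n^k(z;q)$ to the coefficient of $z^{n-k}$ is $a_{n,k}^{(n-k)}(q)$ --- wait, that term does contribute; more carefully, the $z^{n-k}$-coefficient of the right-hand side is $a_{n,k}^{(n-k)}(q)+[k+1]_q\,a_{n,k+1}^{(n-k-1)}(q)+[k+1]_q[k+1]_{q^{-1}}\,a_{n,k+1}^{(n-k)}(q)$, and the last summand vanishes because $\JS_n^{k+1}$ has degree only $n-k-1<n-k$. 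Thus
\begin{align*}
a_{n+1,k+1}^{(n-k)}(q)=a_{n,k}^{(n-k)}(q)+[k+1]_q\,a_{n,k+1}^{(n-k-1)}(q),
\end{align*}
which is precisely the recurrence \eqref{q Stirling 2nd} for $S_q(n,k)$ with the same boundary conditions ($a_{k,k}^{(0)}=1=S_q(k,k)$, and the relevant boundary zeros match). By uniqueness of the solution of this triangular recurrence, $a_{n,k}^{(n-k)}(q)=S_q(n,k)$.

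For $\Jc_n^k(z;q)$ the argument is entirely parallel, using \eqref{mod q Stirling 1st} in the form
\begin{align*}
\Jc_{n+1}^{k+1}(z;q)=\Jc_{n}^{k}(z;q)+\bigl([n]_q[n]_{q^{-1}}+z\,[n]_q\bigr)\,\Jc_{n}^{k+1}(z;q);
\end{align*}
the induction on $n$ gives polynomiality, degree $n-k$, and coefficients in $\N[q,q^{-1}]$, and extracting the top coefficient of $z^{n-k}$ yields $b_{n+1,k+1}^{(n-k)}(q)=b_{n,k}^{(n-k)}(q)+[n]_q\,b_{n,k+1}^{(n-k-1)}(q)$, which is the recurrence \eqref{q Stirling 1st} for $c_q(n,k)$, so $b_{n,k}^{(n-k)}(q)=c_q(n,k)$. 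The main (and really only) subtlety is the degree claim: one must be sure the leading coefficient of $z^{n-k}$ never cancels under the recurrence. This is handled exactly by the $\N[q,q^{-1}]$-positivity observed above, which is why it is natural to bundle the three assertions into a single induction rather than prove them separately --- the positivity statement is what protects the degree statement.
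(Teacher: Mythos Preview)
Your proposal is correct and follows exactly the approach indicated in the paper, which simply states that the result ``follows from \eqref{mod q Stirling 1st} and \eqref{mod q Stirling 2nd} by induction on $n$.'' You have supplied the details the paper omits, including the key observation that bundling the positivity claim (coefficients in $\N[q,q^{-1}]$) with the degree claim is what prevents cancellation of the leading $z^{n-k}$ term.
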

\begin{proof}
This follows from \eqref{mod q Stirling 1st} and \eqref{mod q Stirling 2nd} by induction on $n$.
\end{proof}

\begin{remarks}
\begin{enumerate}
\item The leading coefficient of $\JS_{n}^{k}(z;q)$ and $\Jc_{n}^{k}(z;q)$ in \eqref{eq:defa} are the $q$-Stirling numbers of second kind $S_q(n,k)$ and the $q$-Stirling numbers of first kind $c_q(n,k)$, respectively. Hence, we have  
$\lim\limits_{z\to \infty} \frac{\JS_{n}^{k}(z;q)}{z^{n-k}} = S_q(n,k)$ whilst $\lim\limits_{z\to \infty} \frac{\Jc_{n}^{k}(z;q)}{z^{n-k}} = c_q(n,k)$. 

\item Observe that $x- [i]_{q^{-1}} \left( \frac{1}{q-1} + [i]_q\right) = \frac{q}{q-1} \left( \frac{q-1}{q} x - [i]_q\right)$, for $i=0,1,2,\ldots$. Therefore, from the pair of identities \eqref{q fact to monomials1}--\eqref{q fact to monomials2} and \eqref{qjs1}--\eqref{qjs2}, it follows that 
\begin{equation}\label{JSc to Sc}
	\JS_{n}^{k}(\tfrac{1}{q-1};q^{-1}) = \left(\frac{q}{q-1}\right)^{n-k} S_q(n,k)
	\quad \text{ and } \quad 
	\Jc_{n}^{k}(\tfrac{1}{q-1};q^{-1}) = \left(\frac{q}{q-1}\right)^{n-k} c_q(n,k), 
\end{equation}
for $k=0,1,\ldots, n$.

\item When $z=0$, by analogy with the ordinary central factorial numbers (cf. \cite{ZengGelineau}), we can define 
 the corresponding $q$-Jacobi-Stirling numbers as  $q$-central factorial numbers of even indices~:
 \begin{align}\label{central factorials}
U_q({n}, {k}) =\JS_{n}^{k}(0;q),\quad 
V_q({n},{k})=\Jc_{n}^{k}(0;q).
\end{align}
Therefore, the following recurrences hold true~:
\begin{align}
 &	U_q({n+1}, {k+1}) =  U_q(n,k)+[k+1]_q[k+1]_{q^{-1}}U_q(n,k+1),   \ 0\leqslant k \leqslant n,   \label{mod q CF1st}\\
 &	V_q({n+1}, {k+1})= V_q({n},{k})  + [n]_q  [n]_{q^{-1}} V_q({n},{k+1}) , \ 0\leqslant k \leqslant n, 				\label{mod q CF 2nd}
\end{align}
with 
$U_q({n},{k}) = V_q({n},{k})= 0 ,\ \text{ if } k \notin \{1,\ldots, n\},$ and $U_q({0},{0}) = V_q({0},{0})=1$, $n\geqslant 0$.

\end{enumerate}
\end{remarks}
Some value of the $q$-Jacobi-Stirling numbers of first kind are as follows:
\begin{align*}
\Jc_n^{1}(z;q)&= \prod_{k=1}^{n-1} [k]_q(z+[k]_{q^{-1}}),\quad Jc_n^{(n)}(z,q)=1,\\
\Jc_3^{2}(z;q)&=(3+q+q^{-1})+(2+q)z,\\
\Jc_4^{2}(z;q)&=({q}^{-3}+5\,{q}^{-2}+11\,{q}^{-1}+{q}^{3}+11\,q+5\,{q}^{2}+15)\\
&+
 \left( 2\,{q}^{3}+14\,q+8\,{q}^{2}+2\,{q}^{-2}+7\,{q}^{-1}+15
 \right) z+ \left( 4\,q+{q}^{3}+3+3\,{q}^{2} \right) {z}^{2},\\
\Jc_4^{3}(z;q)&=\bigl(3q^{-1}+6+q^{2}+3q+q^{-2}\bigr)+(3+2q+q^2)z.
\end{align*}
Some  values of the $q$-Jacobi-Stirling numbers of second kind are as follows:
\begin{align*}
\JS_n^{1}(z;q)&=(1+z)^{n-1},\quad JS_n^{(n)}(z;q)=1,\\
\JS_3^{2}(z;q)&=(3+q+q^{-1})+(2+q)z,\\
\JS_4^{2}(z;q)&=(9+q^{-2}+q^{2}+5q+5q^{-1})+(11+3q^{-1}+2q^2+8q)z+(3q+3+q^2)z^2.
\end{align*}

\subsection{Combinatorial interpretation of the $q$-Jacobi-Stirling numbers}\label{subsec: combinatorial}

For any positive integer $n$, we consider the set of two copies of the integers:
\begin{align*}
[n]_2=\{1_1, 1_2,\ldots, n_1, n_2\}.
\end{align*}
\begin{definition}
A  Jacobi-Stirling  $k$-partition of $[n]_2$ is a partition of $[n]_2$ into 
  $k+1$  subsets $B_0, B_1,\ldots B_k$ of $[n]_2$ satisfying   the following conditions:
\begin{enumerate}
\item there is a zero block $B_0$, which may be empty and  cannot contain both copies of any $ i \in [n]$,
\item $\forall j\in [k]$, each nonzero block  $B_j$ is not empty and contains the two copies of its smallest element and does not contain both copies of any other number.
\end{enumerate}
\end{definition}
We shall denote the zero block by $\{\ldots\}_0$. 
For example, the partition 
$$\pi=\{\{  2_2, 5_1\}_0 , \{1_1, 1_2 , 2_1 \} , \{3_1, 3_2, 4_2 \}, \{ 4_1, 5_2 \}\}$$
 is not a Jacobi-Stirling 3-partition of $[5]_2$, while 
 $$\pi'=\{\{  2_2, 5_1\}_0 , \{ 1_1, 1_2 , 2_1 \} , \{3_1, 3_2 \}, \{4_1, 4_2, 5_2 \}\}$$
  is a Jacobi-Stirling  3-partition of $[5]_2$.
We order the blocks of a partition
% $\pi=(B_0, B_1,\ldots B_k)$
 in increasing order of their minimal elements. By convention, the zero block is 
at the first position.

\begin{definition}
An inversion of type 1  of $\pi$ is a pair $(b_1,B_j)$, 
where $b_1\in B_i$ for some $i$ ($1\leqslant i<j$) and $b_1>c_1$ for some $c_1\in B_j$.
An inversion of type 2 of $\pi$ is a pair $(b_2, B_j)$, 
where $b_2\in B_i$ for some $i$ ($0\leqslant i<j$) and $b_2>c_2$ for some $c_2\in B_j$ and $b_1\not\in B_j$, where 
$a_i$ means  integer $a$ with  subscript $i=1,2$. Let $\inv_i(\pi)$ be the number of inversions of $\pi$ of type $i=1,2$ and 
set
$\inv(\pi)=\inv_2(\pi)-\inv_1(\pi).
$
Let $\Pi(n,k,i)$ denote the set of  Jacobi-Stirling  $k$-partitions of $[n]_2$
such that the zero-block contains $i$ numbers with subscript1.
\end{definition}

For example, 
for the Jacobi-Stirling 2-partitions of $[3]_2$ we have the following tableau\\
\begin{center}
\begin{tabular}{|c|c|c|c|}
\hline
JS 2-partitions of $[3]_2$&$inv_1$&$\inv_2$&$\inv$\\
\hline
$\{\}_0,\; \{1_1, 1_2, 3_2\}, \; \{2_1, 2_2, 3_1\}$&$0$&$0$&$0$\\
\hline
$\{\}_0,\; \{1_1,1_2, 3_1\}, \; \{2_1,2_2, 3_2\}$&$1$&$0$&$-1$\\
\hline
$\{3_2\}_0,\; \{1_1,1_2, 3_1\}, \; \{2_1,2_2\}$&$1$&$1$&$0$\\
\hline
$\{ 3_2\}_0,\; \{1_1,1_2\}, \; \{2_1,2_2, 3_1\}$&$0$&$1$&$1$\\
\hline
$\{2_2\}_0,\; \{1_1,1_2, 2_1\}, \; \{3_1,3_2\}$&$0$&$0$&$0$\\
\hline
$\{2_1\}_0,\; \{1_1,1_2,2_2\}, \; \{3_1,3_2\}$&$0$&$0$&$0$\\
\hline
$\{3_1\}_0,\; \{1_1,1_2, 3_2\}, \; \{2_1,2_2\}$&$0$&$1$&$1$\\
\hline
$\{3_1\}_0,\; \{1_1,1_2\}, \; \{2_1,2_2, 3_2\}$&$0$&$0$&$0$\\
\hline
\end{tabular}
\end{center}
Thus,
$$
\sum_{\pi\in \Pi(3,2,0)}q^{\inv(\pi)}=3+q+q^{-1}\quad \text{and}\quad 
\sum_{\pi\in \Pi(3,2,1)}q^{\inv(\pi)}=2+q.
$$
\begin{thm} \label{thm2} For any  positive integers  $n$ and $k$ and 
$0\leqslant i\leqslant n-k$ 
we have 
$$
a_{n,k}^{(i)}(q)=\sum_{\pi\in \Pi(n,k,i)}q^{\inv(\pi)}.
$$

 \end{thm}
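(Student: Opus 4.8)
The plan is to establish the identity $a_{n,k}^{(i)}(q)=\sum_{\pi\in\Pi(n,k,i)}q^{\inv(\pi)}$ by showing that the right-hand side satisfies the same recurrence and initial conditions as the left-hand side. From \eqref{eq:defa} together with the triangular recurrence \eqref{mod q Stirling 2nd}, expanding $\qJS{n+1}{k+1}{z}$ in powers of $z$ and matching coefficients gives
\begin{align*}
a_{n+1,k+1}^{(i)}(q)=a_{n,k}^{(i)}(q)+[k+1]_q[k+1]_{q^{-1}}\,a_{n,k+1}^{(i)}(q)+\bigl([k+1]_q+[k+1]_{q^{-1}}\bigr)\,a_{n,k+1}^{(i-1)}(q),
\end{align*}
for $0\leqslant i\leqslant n+1-k$, with $a_{n,k}^{(i)}(q)=0$ whenever $i<0$ or $i>n-k$ or $k\notin\{1,\ldots,n\}$, and $a_{0,0}^{(0)}(q)=1$. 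Here one uses $[k+1]_q[k+1]_{q^{-1}}=[k+1]_q([k+1]_{q^{-1}}+z)-[k+1]_q z$ and the observation that the $z$-coefficient of $[k+1]_q\bigl(z+[k+1]_{q^{-1}}\bigr)$ is $1$ while the linear part in $z$ also receives a contribution $[k+1]_q^2$... more precisely, writing $[k+1]_q\bigl(z+[k+1]_{q^{-1}}\bigr)=[k+1]_q[k+1]_{q^{-1}}+[k+1]_q z$, the coefficient extraction yields the three displayed terms once one checks $[k+1]_q+[k+1]_q[k+1]_{q^{-1}}\cdot 0$ — I would verify the precise shape of the middle/last coefficients carefully, since this is the one routine computation underlying everything.

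Next I would set $\tilde a_{n,k}^{(i)}(q):=\sum_{\pi\in\Pi(n,k,i)}q^{\inv(\pi)}$ and prove the matching recurrence combinatorially by classifying a Jacobi-Stirling $k{+}1$-partition $\pi$ of $[n{+}1]_2$ according to the fate of the pair $\{(n{+}1)_1,(n{+}1)_2\}$. There are exactly three cases: (a) both copies $(n{+}1)_1,(n{+}1)_2$ form their own singleton-generated new block $B_{k+1}$; removing it gives a Jacobi-Stirling $k$-partition of $[n]_2$ in $\Pi(n,k,i)$, and since $n{+}1$ is maximal this new block sits last and contributes no inversions — accounting for the term $a_{n,k}^{(i)}$. (b) $(n{+}1)_1$ and $(n{+}1)_2$ both lie in the same already-existing nonzero block $B_j$ (they cannot be the minimum there): deleting them from a partition in $\Pi(n,k+1,i)$ and choosing which of the $k+1$ nonzero blocks receives them, weighted by the change in $\inv$; the generating polynomial for the inversions created by inserting the label $n+1$ with each subscript into the $j$-th block, summed over $j$, must equal $[k+1]_q[k+1]_{q^{-1}}$ (this is the standard $q$-Stirling insertion lemma, applied once for each subscript, with signs from $\inv=\inv_2-\inv_1$). (c) the two copies are separated: exactly one copy, necessarily $(n{+}1)_2$ (because condition (2) on nonzero blocks and condition (1) on the zero block forbid $(n{+}1)_1$ from being placed while $(n{+}1)_2$ is elsewhere unless $(n{+}1)_2$ is in the zero block, and symmetric bookkeeping), goes to the zero block, raising $i$ by $1$, while the other copy goes to one of the blocks; the inversion generating function here is $[k+1]_q+[k+1]_{q^{-1}}$, giving the term with $a_{n,k+1}^{(i-1)}$.

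The main obstacle is case (c): one must pin down \emph{exactly} which configurations put a copy of $n+1$ into the zero block versus a nonzero block, keep track of the asymmetry between subscript-$1$ and subscript-$2$ copies built into the definitions of $\inv_1$ (pairs with $i\geqslant 1$) and $\inv_2$ (pairs with $i\geqslant 0$, plus the extra condition $b_1\notin B_j$), and verify that the combined weight is precisely $[k+1]_q+[k+1]_{q^{-1}}$ rather than some other Laurent polynomial. I would handle this by the insertion-order trick: build $\pi$ by inserting the pairs $\{1_1,1_2\},\{2_1,2_2\},\ldots$ in increasing order of the underlying integer, so that at the moment $n+1$ is inserted the nonzero blocks are already in final relative order and the number of inversions added is determined locally by which block (among the zero block and the $k+1$ nonzero blocks, read left to right) each copy lands in; summing $q^{\pm(\text{position})}$ over the admissible choices then gives the $q$-integers above. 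Once the recurrence for $\tilde a_{n,k}^{(i)}$ is confirmed to coincide with that for $a_{n,k}^{(i)}$ and the base case $\tilde a_{0,0}^{(0)}=1$ (empty partition, $\inv=0$) is noted, the theorem follows by induction on $n$.
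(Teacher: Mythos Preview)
Your overall strategy (verify that both sides satisfy the same recurrence, proved combinatorially by inserting the largest label) matches the paper's, but the execution has genuine errors on both the algebraic and combinatorial sides.

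\textbf{Algebraic side.} Extracting the coefficient of $z^i$ from $\qJS{n+1}{k+1}{z}=\qJS{n}{k}{z}+[k+1]_q\bigl(z+[k+1]_{q^{-1}}\bigr)\qJS{n}{k+1}{z}$ gives
\[
a_{n+1,k+1}^{(i)}(q)=a_{n,k}^{(i)}(q)+[k+1]_q\,a_{n,k+1}^{(i-1)}(q)+[k+1]_q[k+1]_{q^{-1}}\,a_{n,k+1}^{(i)}(q),
\]
not $[k+1]_q+[k+1]_{q^{-1}}$ in front of $a_{n,k+1}^{(i-1)}$. Your ``routine computation'' is the one that actually matters here, and it comes out differently from what you wrote.

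\textbf{Combinatorial side.} Your case (b) --- both copies of $n{+}1$ placed in the \emph{same} existing nonzero block --- is forbidden by the definition: a nonzero block contains both copies only of its minimum. So this case is empty and cannot account for the $[k+1]_q[k+1]_{q^{-1}}$ term. In case (c) you also have the subscripts backwards: the index $i$ counts subscript-$1$ elements in the zero block, so it is $(n{+}1)_1$ (not $(n{+}1)_2$) landing in $B_0$ that increments $i$. The correct trichotomy, which the paper uses, is:
(i) $\{(n{+}1)_1,(n{+}1)_2\}$ is a new block, contributing $a_{n,k}^{(i)}$;
(ii) $(n{+}1)_1\in B_0$ and $(n{+}1)_2$ in one of the $k{+}1$ nonzero blocks, contributing $[k+1]_q\,a_{n,k+1}^{(i-1)}$;
(iii) $(n{+}1)_1$ in some nonzero $B_j$ and $(n{+}1)_2$ in any block $B_l\neq B_j$ (including $l=0$), contributing $[k+1]_q[k+1]_{q^{-1}}\,a_{n,k+1}^{(i)}$.
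The last weight comes from summing $q^{-(k+1-j)}\bigl(\sum_{l<j}q^{k-l}+\sum_{l>j}q^{k+1-l}\bigr)$ over $j$, using the definitions of $\inv_1$ and $\inv_2$ (note the condition $b_1\notin B_m$ in type~2 removes exactly the term $m=j$). Once the case split is corrected, the recurrences agree and the induction goes through.
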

\begin{proof} We  prove by induction on $n\geqslant 1$. 
The identity is clearly true for $n=1$.  Assume $n>1$. We divide  $\Pi(n,k,i)$ into  three  subsets as follows.
\begin{itemize}
\item $\{n_1, n_2\}$ forms a single block, the enumerative  polynomial is $a_{n-1,k-1}^{(i)}(q)$.
\item $n_1$ is in the zero-block and $n_2$ is in a non zero-block, the enumerative  polynomial is \linebreak $(1+q+\cdots +q^{k-1})a_{n-1,k}^{(i-1)}(q)$.
\item $n_1$ is in a non zero-block $B_j$, so $(n_1, B_i)$ is an inversion of type 1 for any $i=j+1, \ldots, k$, 
and $n_2$ is in any other block $B_l$ ($l\neq j$), so $(n_2, B_i)$ is an inversion for any $i=l+1, \ldots, k$ and $l\neq j$, so the enumerative  polynomial is 
\[
\biggl(\sum_{j=1}^k   q^{-(k-j)}\biggl( \sum_{l=0}^{j-1}q^{k-l-1}+\sum_{l=j+1}^{k}q^{k-l} \biggr)\biggr)a_{n-1,k}^{(i)}(q)=[k]_q[k]_{q^{-1}}a_{n-1,k}^{(i)}(q).
\]
\end{itemize}
Summing up, we have 
$$
a_{n,k}^{(i)}(q)=a_{n-1,k-1}^{(i)}(q)+[k]_qa_{n-1,k}^{(i-1)}(q)+[k]_q[k]_{q^{-1}} a_{n-1,k}^{(i)}(q).
$$
This is equivalent to \eqref{mod q Stirling 2nd}.
\end{proof}

For  a permutation $\sigma$ of  $[n]_0:=[n]\cup \{0\}$ (resp. $[n]$) and
 for $j\in [n]_0$ (resp. $[n]$), denote by \linebreak
$\Orb_\sigma(j)=\{ \sigma^\ell(j): \ell \geqslant 1 \}$ the orbit of $j$ and $\min(\sigma)$
  the set of  its positive cyclic minima, i.e.,
  $$
\min(\sigma)=\{ j \in [n]: j=\min(\Orb_\sigma(j)\cap [n]) \}.
  $$

\begin{definition}  Given a word $w=w(1)\ldots w(\ell)$ on the finite alphabet $[n]$,
a  letter $w(j)$ is a \emph{ record} of $w$ if $w(k)>w(j)$ for every $k\in \{1,\ldots, j-1\}$.
We define $\rec(w)$  to be the number of records of $w$ and $\rec_0(w)=\rec(w)-1$.
\end{definition}

For example, if $w={\bf 5}7{\bf  4}86{\bf 2}3{\bf 1}9$, then
the records are  $5, 4, 2,1$. Hence $\rec(w)=4$.

\begin{definition}
Let $\mathcal{P}(n,k, i)$ be the set of all  pairs $(\sigma,\tau)$ such that
$\sigma$ is a permutation of $[n]_0$, $\tau$  is a permutation of  $[n]$, and both have $k$ cycles.
\begin{itemize}
 \item[i)] 1 and 0 are in the same cycle in $\sigma$;
\item[ii)] among their nonzero entries, $\sigma$ and $\tau$ have  the same cycle minima;
\item[iii)] $\rec_0(w)=i$, where $w=\sigma(0)\sigma^2(0)\ldots \sigma^l(0)$ with $\sigma^{l+1}(0)=0$.
\end{itemize}
\end{definition}

As Foata and Han~\cite{FoataHan2009}, we  define the $B$-code of a permutation $\sigma$ of $[n]$ 
based on the decomposition of each permutation as a product of disjoint cycles. For a permutation
 $\sigma=\sigma(1)\sigma(2)\cdots \sigma(n)$ and each $i=1,2,\ldots, n$ let $k:=k(i)$ be the smallest integer $k\geqslant 1$ such that $\sigma^{-k}(i)\leqslant i$. 
 Then, define 
 $$
 \text{B-code} (\sigma)=(b_1, b_2, \ldots, b_n)\quad \text{with}\quad b_i:=\sigma^{-k(i)}(i)\quad  (1\leqslant i\leqslant n).
 $$
We define the  sorting index for permutation $\sigma$ of $[n]$  by 
$
\sor(\sigma)=\sum_{i=1}^n (i-b_i),
$
while for a  permutation $\sigma$ of $[n]_0$ we define the modified sorting index by
$
\sor_0(\sigma)=\sum_{i=1}^n (i-b_i'),
$
where $b_i'=b_i$ if $\sigma^{-1}(i)\neq 0$ and $b_i'=i$ if  $\sigma^{-1}(i)=0$. 
Finally, for any pair $(\sigma, \tau)$ in $\mathcal{P}(n,k, i)$ we define the statistic 
$$
\sor(\sigma, \tau)=\sor(\tau)-\sor_0(\sigma).
$$
\begin{thm} \label{thm3} We have 
$ \displaystyle \ 
b_{n,k}^{(i)}(q)=\sum_{(\sigma,\tau)\in \mathcal{P}(n,k,i)}q^{\sor(\sigma,\tau)}.
$
\end{thm}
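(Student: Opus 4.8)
The plan is to mirror the inductive strategy used for Theorem \ref{thm2}, but now tracking the sorting-index statistic instead of the inversion statistic, and working on pairs of permutations rather than set partitions. First I would establish that the generating polynomial $R_{n,k}^{(i)}(q):=\sum_{(\sigma,\tau)\in\mathcal{P}(n,k,i)}q^{\sor(\sigma,\tau)}$ satisfies the same recurrence as $b_{n,k}^{(i)}(q)$, namely (reading off \eqref{mod q Stirling 1st})
\begin{equation*}
R_{n,k}^{(i)}(q)=R_{n-1,k-1}^{(i)}(q)+[n-1]_q\,R_{n-1,k}^{(i-1)}(q)+[n-1]_q[n-1]_{q^{-1}}\,R_{n-1,k}^{(i)}(q),
\end{equation*}
together with the boundary data $R_{n,k}^{(i)}=0$ for $k\notin\{1,\dots,n\}$ or $i\notin\{0,\dots,n-k\}$ and $R_{0,0}^{(0)}=1$. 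Since the base case $n=1$ is immediate (the only pair is a fixed point together with the $2$-cycle $(0\,1)$, contributing $q^0$), establishing this recurrence and invoking \eqref{eq:defb}–\eqref{mod q Stirling 1st} finishes the proof.

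To derive the recurrence I would classify the pairs $(\sigma,\tau)\in\mathcal{P}(n,k,i)$ according to the position of $n$ in the cycle structures of $\sigma$ and $\tau$. Because $n$ is the largest element, condition (ii) (equal nonzero cycle minima) forces $n$ to play the same structural role in both: either $n$ is a fixed point of both $\sigma$ and $\tau$ (this gives a cycle minimum, decreasing both cycle counts by one and yielding the term $R_{n-1,k-1}^{(i)}$, with no change to either sorting index since $b_n=n$ and $n-b_n=0$ in each); or $n$ is \emph{not} a cycle minimum in either permutation, so it can be inserted, independently in $\sigma$ and in $\tau$, immediately before any of the remaining elements in the cyclic order. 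For $\tau$ one uses the standard fact that inserting $n$ into a permutation of $[n-1]$ with $k$ cycles, in a non-minimal position, contributes a factor $q^{n-b_n}$ where $b_n$ ranges over $\{1,\dots,n-1\}$, i.e.\ a factor $1+q+\cdots+q^{n-2}=[n-1]_q$ to the $\sor(\tau)$-part; for $\sigma$, the analysis is the same after noting that inserting $n$ does not disturb the word $w=\sigma(0)\sigma^2(0)\cdots$ read off the $0$-cycle unless $n$ is placed into that cycle, and in that case $n$ is never a record of $w$ (it is maximal), so $\rec_0(w)$ is unchanged — this is what keeps the superscript $i$ fixed in the third term. The subtle point is the middle term $[n-1]_q\,R_{n-1,k}^{(i-1)}$: it should correspond to the situation where inserting $n$ into the $0$-cycle of $\sigma$ in a position that creates a new record of $w$, i.e.\ where $n$ becomes the first letter of $w$ — but $n$ maximal is never a record unless... here I would instead reinterpret it via the modified sorting index $\sor_0$, where the convention $b_i'=i$ when $\sigma^{-1}(i)=0$ means that the element \emph{following} $0$ in the $0$-cycle is exactly the one excluded from the sum $\sum(i-b_i')$; inserting $n$ right after $0$ therefore shifts which element is excluded and this is precisely the combinatorial event tagged by $\rec_0$. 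Reconciling these two descriptions is the main obstacle, and I expect it to require a careful bijective lemma describing how $\sor_0$, $\rec_0$ and the $B$-code interact under insertion into the distinguished $0$-cycle.

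Concretely, the key steps in order are: (1) set up $R_{n,k}^{(i)}(q)$ and check $n=1$; (2) prove the insertion lemma for ordinary permutations — inserting the maximal letter into a non-minimal slot multiplies the $\sor$-generating function by $[n-1]_q$, and into the minimal slot (new fixed point) by $1$ while decrementing the cycle count; (3) prove the analogous lemma for the pointed permutation $\sigma$ of $[n]_0$, separating the case where $n$ enters the $0$-cycle (tracking the effect on $\rec_0(w)$ and on $\sor_0$ via the excluded-element convention) from the case where it enters another cycle (tracking $\sor_0$ only); (4) combine (2) and (3), using condition (ii) to couple the structural position of $n$ in $\sigma$ and $\tau$, to obtain the three-term recurrence above; (5) match it with \eqref{mod q Stirling 1st}. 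I expect step (3), and in particular the bookkeeping that turns the factor $[n-1]_q$ for an in-$0$-cycle, record-creating insertion into a clean $i\mapsto i-1$ shift, to be where essentially all the work lies; steps (1), (2), (4) and (5) should be routine once the right normalization of $\sor_0$ relative to $\rec_0$ is in place.
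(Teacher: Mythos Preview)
Your approach—induction on $n$ via the three-term recurrence \eqref{eqgnk}—is exactly the paper's. The paper organises the case split more crisply, by the value of $\sigma^{-1}(n)$: case~(i) $\sigma^{-1}(n)=n$ (fixed point in both, your first case); case~(ii) $\sigma^{-1}(n)=0$ (this is your elusive middle case); case~(iii) $\sigma^{-1}(n)\in[n-1]$ (your third case). In case~(ii) there is a \emph{single} way to place $n$ in $\sigma$ (as image of $0$) and the $[n-1]_q$ factor comes entirely from the $n-1$ insertions of $n$ into $\tau$; in case~(iii) both insertions are free and give $[n-1]_q[n-1]_{q^{-1}}$.

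Your confusion about records dissolves once you note that in the paper's definition a record is a left-to-right \emph{minimum}: the first letter of $w$ is always a record vacuously. Hence inserting $n$ as the new first letter (i.e.\ $\sigma(0)=n$) makes $n$ a record while all old records persist (they are all $<n$), so $\rec_0$ increases by exactly~$1$; inserting $n$ anywhere else leaves $\rec_0$ unchanged, since $n$, being maximal, cannot be a left-to-right minimum at any later position. That observation is precisely what pins the $i\mapsto i-1$ shift to case~(ii) and keeps $i$ fixed in case~(iii). With this in hand your step~(3) reduces to checking how $\sor_0$ changes under each insertion; the paper dispatches this with a brisk ``Clearly'' in each case, so your caution about the bookkeeping there is not misplaced, but the split by $\sigma^{-1}(n)$ is the organising device you were reaching for.
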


\begin{proof}
 We proceed  by induction on $n\geqslant 1$.  The case $n=1$ is clear. Assume that $n>1$. We divide $\mathcal{P}(n,k,i)$ into three parts:
\begin{itemize}
\item[(i)] the pairs $(\sigma,\tau)$ such that $\sigma^{-1}(n)=n$.
Then $n$ forms a cycle in both $\sigma$ and $\tau$ and the enumerative polynomial 
is clearly $b_{n-1,k-1}^{(i)}(q)$.
\item[(ii)]  the pairs $(\sigma,\tau)$ such that $\sigma^{-1}(n)=0$.
We can construct such pairs starting from a pair $(\sigma',\tau')$ in \linebreak${\mathcal{P}(n-1,k, i-1)}$  as follows:
we insert $n$ in $\sigma'$ as image of $0$, i.e., $\sigma(0)=n$,  $\sigma(n)=\sigma'(0)$ and 
$\sigma(i)=\sigma'(i)$ for $i\neq 0, n$, and then  we insert  $n$  in $\tau'$) by choosing 
an $j\in [n]$ and define $\tau(j)=n$, $\tau(n)=\tau(j)$ and $\tau(l)=\tau'(l)$ for $l\neq j, n$.
Clearly, the enumerative polynomial is  $[n-1]_qb_{n-1,k}^{(i-1)}(q)$.
\item[(iii)] the  pairs $(\sigma,\tau)$ such that $\sigma^{-1}(n)\not\in \{0,n\}$.
We can construct such pairs by first choosing an ordered pair $(\sigma',\tau')$ in $\mathcal{P}(n-1,k,i)$ and
then inserting $n$ in $\sigma'$ and $\tau'$, respectively.
Clearly, the corresponding enumerative polynomial is  given by $ [n-1]_q[n-1]_{q^{-1}}b_{n-1,k}^{(i)}(q)$.
\end{itemize}
Summing up,  we get the following equation:
  \begin{equation} \label{eqgnk}
  b_{n,k}^{(i)} (q)= b_{n-1,k-1}^{(i)}(q) + [n-1]_q b_{n-1,k}^{(i-1)}(q) + [n-1]_q[n-1]_{q^{-1}} b_{n-1,k}^{(i)}(q).
  \end{equation}
  By \eqref{qjs1} and \eqref{eq:defb},  it is easy to see that  the coefficients $b_{n,k}^{(i)}(q)$ satisfy the same recurrence.
   \end{proof}

For example, for $n=3$ and $k=2$  the pairs $(\sigma, \tau)$ and the associated statistics are as follows:
\begin{align*}
\begin{tabular}{|c|c|c|c|c|c|c|}
\hline
$(\sigma,\tau)$&$\rec_0(\sigma)$&$B_0$-code $\sigma$&$B$-code $\tau$& $\sor(\tau)$&$\sor_0(\sigma)$&$\sor(\sigma, \tau)$\\ 
\hline
$(0\,1)(2\,3), \; (1)(23)$&0&(1,2,2)&(1,2,2)&1&1&0\\ 
\hline
$(0\,1)(2\,3), \; (13)(2)$&0&(1,2,2)&(1,2,1)&2&1&1\\ 
\hline
$(0\,1\,2)\,(3), \; (1\,2)(3)$&0&(1,1,3)&(1,1,3)&1&1&0\\ 
\hline
$(0\,1\, 3)\,(2), \; (13)(2)$&0&(1,2,1)&(1,2,1)&2&2&0\\ 
\hline
$(0\,1\, 3))(2), \; (1)(23)$&0&(1,2,1)&(1,2,2)&1&2&-1\\ 
\hline
$(0\,3\, 1)(2), \; (1)(23)$&1&(0,2,3)&(1,2,2)&1&1&0\\ 
\hline
$(0\,3\,1)(2), \; (1)(23)$&1&(0,2,3)&(1,1,3)&1&1&0\\ 
\hline
$(0\,2\,1)(3), \; (1)(23)$&1&(0,2,3)&(1,2,1)&2&1&1\\ 
\hline
\end{tabular}
\end{align*}
Thus,
$$
\sum_{(\sigma,\tau)\in \mathcal{P}(3,2,0)}q^{\sor(\sigma,\tau)}=3+q+q^{-1},\quad \sum_{(\sigma,\tau)\in \mathcal{P}(3,2,1)}q^{\sor(\sigma,\tau)}=2+q.
$$
%%%%%%%%%%%%%%%%%%
\subsection{A symmetric generalisation  of Jacobi-Stirling numbers} \label{subsec: generalisation}
As suggested by Richard Askey (private communication in 2013), 
it is natural to 
consider the pair of connection coefficients $\{(S_{z,w}(n,k), s_{z,w}(n,k))\}_{n\geqslant k\geqslant 0}$ satisfying 
\begin{align*}
x^{n}=\sum_{k=0}^n S_{z,w}(n,k)& \prod_{i=0}^{k-1} (x-(i+z)(i+w)),\\
\prod_{i=0}^{n-1} (x-(i+z)(i+w))&=\sum_{k=0}^n s_{z,w}(n,k)x^{k}.
\end{align*}
It is readily seen that we have the following recurrence relation for $0\leqslant k\leqslant n$
\begin{align*}
S_{z,w}(n+1,k+1)&=S_{z,w}(n,k)+(z+k+1)(w+k+1)S_{z,w}(n,k+1),\\
s_{z,w}(n+1,k+1)&=s_{z,w}(n,k)-(z+n)(w+n)s_{z,w}(n,k+1),
\end{align*}
with $S_{z,w}({n},{k}) = s_{z,w}(n,k)= 0 ,\ \text{ if } k \notin \{1,\ldots, n\},$ and $S_{z,w}({0},{0}) = s_{z,w}(0,0)=1$.

It is clear that $S_{z,w}(n,k)$ are  symmetric polynomials in $z$ and $w$ with non negative integer coefficients and 
$$
\sum_{n\geqslant k}S_{z,w}(n,k)x^n=\prod_{i=1}^k \frac{x}{1-(i+z)(i+w)x}.
$$
When $w=0$ these numbers reduce to Jacobi-Stirling numbers.
Mimicking the arguments in  \cite{ZengGelineau}, we can prove similar results for the symmetric variants.  
Hence, we will juste state the  combinatorial interpretations of these numbers and  omit the proofs.
\begin{definition}
A double signed $k$-partition of $[n]_2=\{1_1, 1_2, \ldots, n_1, n_2\}$ is a partition  of  $[n]_2$ into 
 $k+2$ subsets  $(B_0, B_0', B_1, \ldots, B_k)$   such that 
\begin{enumerate}
\item there are two distinguishable
zero blocks $B_0$ and $B_0'$, any of which may be empty;
\item there are $k$ indistinguishable nonzero blocks, all nonempty, each of which
contains both copies of its smallest element and does not contain both copies of any other number;
\item each  zero block  does not contain both copies of any number and $B_0'$ may contain only numbers with subscript $2$.
\end{enumerate}
\end{definition}

Let $\Pi(n,k)$ be the set of double signed $k$-partitions of $[n]_2$.
For $\pi\in \Pi(n,k)$ denote by $s(\pi)$ (resp. $t(\pi)$) the number of integers  with subscript 1 (reps. 2) in $B_0$ (reps. $B_0'$) 
of $\pi$.

\begin{thm}\label{Thm: Stirling in z and w}
The polynomial $S_{z,w}(n,k)$ is the enumerative polynomial of $\Pi(n,k)$
with $z$ enumerating the numbers with subscript 1 in $B_0$ and $w$ enumerating the numbers with subscript 2 in $B_0'$, i.e., 
$$
S_{z,w}(n,k)=\sum_{\pi \in  \Pi(n,k)}z^{s(\pi)}w^{t(\pi)}.
$$
%
%the coefficient $a_{n,k}(i,j)$ is the number of double signed $k$-partitions of 
%$[n]_2$  with $i$ numbers with subscript 1 in $B_0$  and $j$  numbers with subscript 2  in $B_0'$.
\end{thm}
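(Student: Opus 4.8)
The plan is to proceed by induction on $n$, mirroring the recurrence satisfied by $S_{z,w}(n,k)$ and exhibiting a combinatorial bijection that realizes each of its three terms. First I would record the base case: for $n=1$ the only double signed $1$-partition of $\{1_1,1_2\}$ is $(\emptyset,\emptyset,\{1_1,1_2\})$ (the smallest element must carry both copies, and neither copy can land in a zero block), so the right-hand side is $1=S_{z,w}(1,1)$, and all other cases vanish on both sides.

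For the inductive step, fix $\pi\in\Pi(n,k)$ and classify it according to the fate of the two copies $n_1,n_2$ of the largest element $n$. Since $n$ is maximal it can never be a block-minimum, so it cannot open a new nonzero block by the "both copies of the smallest element" rule; this leaves exactly three mutually exclusive possibilities. \textbf{(a)} Both $n_1$ and $n_2$ sit together in a nonzero block: then that block is necessarily the singleton $\{n_1,n_2\}$ (it would have to contain both copies of its minimum, which is $n$), so deleting it gives a bijection with $\Pi(n-1,k-1)$ preserving $s$ and $t$, contributing $S_{z,w}(n-1,k-1)$. \textbf{(b)} $n_1\in B_0$ and $n_2\in B_0'$: deleting both copies gives a bijection with $\Pi(n-1,k)$, and the removed elements contribute a factor $z$ (from $n_1$ in $B_0$) times $w$ (from $n_2$ in $B_0'$), i.e.\ $zw\,S_{z,w}(n-1,k)$ — wait, one must be careful: $n_1$ could also be in a nonzero block while $n_2\in B_0'$, etc. The clean way to organize the remaining cases is: place $n_2$ first (it may go into $B_0'$, into $B_0$ — no, $B_0$ can contain subscript-$1$ copies only if\ldots). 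Let me restate: the three groups are (a) $\{n_1,n_2\}$ forms its own block; (b) the copy $n_2$ goes into one of the $k$ nonzero blocks or into $B_0$, while $n_1$ goes into one of the $k$ nonzero blocks or into $B_0'$—this is not right either. The correct trichotomy matching the recurrence is: (a) both copies form a new block ($\to S_{z,w}(n-1,k-1)$); (b) $n_1$ lies in $B_0$ and $n_2$ in $B_0'$ ($\to zw$ times $S_{z,w}(n-1,k)$); (c) at least one of $n_1,n_2$ lies in a nonzero block, the insertion positions being counted by $(z+k+1-1)(w+k+1-1)$ once the bookkeeping with the $z,w$-weights on the zero blocks is done. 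Expanding $(z+k)(w+k)$—no: the recurrence coefficient is $(z+k+1)(w+k+1)$ when passing from level $k+1$, so at level $k$ the factor is $(z+k)(w+k)$, and one checks that inserting $n_1$ into any of the $k$ nonzero blocks or $B_0$-via-the-$z$-slots gives $k+z$ choices and similarly $k+w$ for $n_2$, minus the already-counted configurations.

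The main obstacle, and where the argument needs genuine care, is precisely this last case analysis: one must show that the configurations in which $n_1$ or $n_2$ enters a nonzero block — together with the "generating-function" weights $z$ and $w$ attached to further copies entering $B_0$ and $B_0'$ — combine to give exactly the multiplicative factor $(z+k)(w+k)\,S_{z,w}(n-1,k)$, while avoiding double-counting the case (a) and case (b) configurations. The cleanest route is to not split by nonzero-block membership at all, but rather to peel off $n_1$ and $n_2$ independently: removing $n_1$ from $\pi$ and then $n_2$, tracking that $n_1$ had $k$ nonzero-block choices plus a $z$-weighted choice for $B_0$, and $n_2$ had $k$ nonzero-block choices plus a $w$-weighted choice for $B_0'$, with the sole forbidden combination being "$n_1,n_2$ alone together" (already in (a)) — this directly yields
$$
S_{z,w}(n,k)=S_{z,w}(n-1,k-1)+\bigl((z+k)(w+k)\bigr)S_{z,w}(n-1,k),
$$
which is the stated recurrence. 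Since $\sum_{\pi\in\Pi(n,k)}z^{s(\pi)}w^{t(\pi)}$ satisfies the same recurrence and the same initial conditions, the two agree for all $n\geqslant k\geqslant 0$, completing the induction. The symmetry in $z\leftrightarrow w$ is then manifest from swapping subscripts $1\leftrightarrow 2$ and $B_0\leftrightarrow B_0'$, and specializing $w=0$ forces $B_0'=\emptyset$, recovering Theorem~\ref{thm2}'s partition model for the ordinary Jacobi-Stirling numbers.
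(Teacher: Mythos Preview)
Your inductive strategy is the intended one (the paper omits the proof, saying it mimics \cite{ZengGelineau}), and your target recurrence
\[
\sum_{\pi\in\Pi(n,k)}z^{s(\pi)}w^{t(\pi)}
= S_{z,w}(n-1,k-1)+(z+k)(w+k)\,S_{z,w}(n-1,k)
\]
is correct. But the bookkeeping in your final paragraph is not: two errors happen to cancel, which is why you land on the right factor $(z+k)(w+k)$.

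First, you assert that $n_2$ has ``$k$ nonzero-block choices plus a $w$-weighted choice for $B_0'$''. This overlooks that $n_2$ may also lie in $B_0$: by the definition only $B_0'$ is restricted to subscript-$2$ elements, while $B_0$ may contain elements of either subscript (it is only forbidden to contain \emph{both} copies of the same integer). An element $n_2\in B_0$ carries weight~$1$, since $s(\pi)$ counts only subscript-$1$ entries of $B_0$. So the weighted choices for $n_2$ are $1+w+k$, not $w+k$.

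Second, you say ``the sole forbidden combination is $n_1,n_2$ alone together''. In the product you set up, that configuration never occurs anyway (both copies go into \emph{existing} blocks). What \emph{does} need to be excluded is the event that $n_1$ and $n_2$ land in the \emph{same} block: either both in $B_0$ (weight $z\cdot 1$) or both in the same nonzero $B_j$ (weight $1$, for $k$ values of $j$). The correct count is therefore
\[
(z+k)(1+w+k)\;-\;(z+k)\;=\;(z+k)(w+k),
\]
which recovers your factor honestly.

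A minor point on the base case: $\Pi(1,0)$ is not empty. Since $1_1$ cannot enter $B_0'$ and cannot share $B_0$ with $1_2$, the unique element is $(\{1_1\}_0,\{1_2\}_0')$, of weight $zw=S_{z,w}(1,0)$, consistent with the example for $n=2,\,k=0$ in the paper. Starting the induction at $n=0$ (where $\Pi(0,0)=\{(\emptyset,\emptyset)\}$ and $\Pi(0,k)=\emptyset$ for $k\geqslant 1$) avoids this.
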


For example, all the double signed $k$-partitions of $[2]_2$ ($1\leqslant k\leqslant 2$) with the corresponding weight are 
\begin{itemize}
\item[$k=0$:] $\pi=\{\{1_1, 2_1\}_0, \{1_2,2_2\}_0'\}$;  with weight $z^2w^2$;
\item[$k=1$:] 
$\pi_1=\{\{2_2\}_0, \{\}_0', \{1_1, 1_2,2_1\}\}$ with weight  $1$;\\
$\pi_2=\{\{2_1\}_0, \{\}_0', \{1_1, 1_2, 2_2\}\}$ with weight $z$;\\
$\pi_3=\{\}_0, \{2_2\}_0', \{1_1,1_2,2_1\}\}$ with weight $w$;\\
$\pi_4=\{\{1_1\}_0,  \{1_2\}_0', \{2_1,2_2\}\}$ with weight $zw$;\\
$\pi_5=\{\{2_1\}_0, \{2_2\}_0', \{1_1,1_2\}\}$ with weight $zw$.
\item[$k=2$:] $\pi=\{\{\}_0, \{\}_0',  \{1_1,1_2\}, \{2_1,2_2\}\}$ with weight $1$.
\end{itemize}
Thus,  by the Theorem \ref{Thm: Stirling in z and w}, we have
\begin{align*}
x^2&=S_{z,w}(2,0)+S_{z,w}(2,1) (x-zw)+S_{z,w}(2,2)(x-zw)(x-(z+1)(w+1)),
%\\&+u(3,3)(x-zw)(x-(z+1)(w+1))(x-(z+2)(w+2)),
\end{align*}
where 
$
S_{z,w}(2,0)=z^2w^2,\quad
S_{z,w}(2,1)=1+z+w+2zw,\quad %(z+1)(w+1)+zw,\quad
S_{z,w}(2,2)=1.
$
%Let $U(n,k)=\sum_{i,j\geqslant 0} a_{n,k}^{(i,j)} z^iw^j$. We can consider 
% the diagonal generating function
% $$
% \sum_{n\geqslant 0}a_{n+k,k}^{(i,j)}x^n=
% $$
% 
\begin{definition}
Let $\mathcal{P}_0(n,k)$ be the set of all  pairs $(\sigma,\tau)$ of permutations of 
 $[n]_0$ such that
$\sigma$  and $\tau$  both have $k$ cycles and 
\begin{itemize}
 \item[i)] 1 and 0 are in the same cycle in $\sigma$ and $\tau$.
\item[ii)] Among their nonzero entries, $\sigma$ and $\tau$ have  the same set of cycle minima.
%\item[iii)] $\rec_0(w)=i$, where $w=\sigma(0)\sigma^2(0)\ldots \sigma^l(0)$ with $\sigma^{l+1}(0)=0$.
\end{itemize}
Let  $\rec_0(w_\sigma)$ be the number of left-to right minima of the word  $w_\sigma=\sigma(0)\sigma^2(0)\ldots \sigma^l(0)$ with $\sigma^{l+1}(0)=0$.
\end{definition}
\begin{thm}\label{sym1}
The polynomial $(-1)^{n-k}s_{z,w}(n,k)$ is the enumerative polynomial of pairs $(\sigma, \tau)$ in 
$\mathcal{P}_0(n,k)$ with $z$ enumerating the ${\rec_0(w_\sigma)}$ and $w$ enumerating ${\rec_0(w_\tau)}$, i.e.,
$$
(-1)^{n-k}s_{z,w}(n,k)=\sum_{(\sigma, \tau)\in \mathcal{P}_0(n,k)}z^{\rec_0(w_\sigma)}w^{\rec_0(w_\tau)}.
$$
\end{thm}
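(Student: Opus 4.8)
\textbf{Proof plan for Theorem \ref{sym1}.}
The plan is to mimic the bijective/recursive strategy already used for Theorems \ref{thm2} and \ref{thm3}, establishing that the right-hand side satisfies the same triangular recurrence and initial conditions as $(-1)^{n-k}s_{z,w}(n,k)$. First I would set $T(n,k):=\sum_{(\sigma,\tau)\in\mathcal P_0(n,k)}z^{\rec_0(w_\sigma)}w^{\rec_0(w_\tau)}$ and check the base case: for $n=1$ the only pair is $\big((0\,1),(0\,1)\big)$, which has $k=1$ cycle, $w_\sigma=w_\tau=1$, hence $\rec_0=0$ and $T(1,1)=1=(-1)^{0}s_{z,w}(1,1)$; all other $(n,k)$ with $n=1$ give the empty set, matching $s_{z,w}(1,k)=0$. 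From the recurrence for $s_{z,w}$ stated just before Theorem \ref{Thm: Stirling in z and w}, namely $s_{z,w}(n+1,k+1)=s_{z,w}(n,k)-(z+n)(w+n)s_{z,w}(n,k+1)$, the target recurrence for $N(n,k):=(-1)^{n-k}s_{z,w}(n,k)$ is
\begin{equation*}
N(n+1,k+1)=N(n,k)+(z+n)(w+n)\,N(n,k+1).
\end{equation*}
So it suffices to prove $T(n+1,k+1)=T(n,k)+(z+n)(w+n)T(n,k+1)$.

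Next I would partition $\mathcal P_0(n+1,k+1)$ according to the behaviour of the largest letter $n+1$ in the pair $(\sigma,\tau)$. The natural trichotomy is: (a) $n+1$ is a fixed point of \emph{both} $\sigma$ and $\tau$ — deleting it gives a pair in $\mathcal P_0(n,k)$ with the same $\rec_0$ statistics (since $n+1$, being maximal, is never a left-to-right minimum of $w_\sigma$ or $w_\tau$ and is never $0$'s cycle), contributing $T(n,k)$; (b) $n+1$ is \emph{not} a fixed point of $\sigma$, i.e. it is inserted into an existing cycle of some $(\sigma',\tau')\in\mathcal P_0(n,k+1)$; (c) the remaining case. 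The cleaner split, following the pattern of Theorem \ref{thm3}, is to treat $\sigma$ and $\tau$ symmetrically: either $n+1$ is a fixed point of both (case (a)); or $n+1$ is inserted as a non-fixed point into the $\sigma$-component only while being a fixed point of $\tau$, or vice versa, or into both. The key computation is that inserting the maximal element $n+1$ into a permutation of $[n]_0$ with $k+1$ cycles, at any of the positions that do not create a new cycle, contributes a factor that I claim equals $(z+n)$ on the $\sigma$-side and $(w+n)$ on the $\tau$-side, where the $z$ (resp. $w$) appears exactly when $n+1$ is placed immediately after $0$ in $w_\sigma$ (resp. $w_\tau$), turning $n+1$ into a new left-to-right minimum of that word; the remaining $n$ insertion slots carry weight $1,q^{0},\dots$ — here, since there is no $q$, simply weight $1$ each, giving $n+z$ total. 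Thus cases (b)+(c) together yield $(z+n)(w+n)T(n,k+1)$, as the $\sigma$- and $\tau$-insertions are independent once the common cycle-minima constraint (ii) and constraint (i) are checked to be preserved.

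The main obstacle I anticipate is verifying that the insertion of $n+1$ respects the two defining constraints of $\mathcal P_0$ simultaneously and bijectively: constraint (i) (that $0$ and $1$ lie in a common cycle, in both $\sigma$ and $\tau$) is automatically preserved because $n+1\ne 1$ and inserting the maximal element into a cycle does not separate $0$ from $1$; constraint (ii) (equal sets of nonzero cycle minima) is preserved because $n+1$ is larger than every other element, so it is never a cycle minimum and its insertion changes no minima — but one must check this is true regardless of \emph{which} cycle receives $n+1$, and that the $\sigma$- and $\tau$-insertions can indeed be chosen independently without violating the shared-minima condition. Once this independence is established, the factorization $(z+n)$ and $(w+n)$ follows exactly as in the classical $q=1$ (unweighted-by-$q$) version of the sorting-index / record correspondence, and the recurrence matches. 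I would also record the small worked example (e.g. $n=2$) to confirm consistency with the displayed values $s_{z,w}(2,0)=z^2w^2$ etc., via the inverse relation, though strictly this is subsumed by the induction. Since the techniques are a direct transcription of \cite{ZengGelineau} and of the proofs of Theorems \ref{thm2}–\ref{thm3} above, the detailed verification of the bijection is routine, and I would state it compactly rather than grinding through every insertion case.
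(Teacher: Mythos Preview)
Your inductive strategy is precisely what the paper intends (it explicitly omits the proof and says to mimic the arguments of \cite{ZengGelineau} and of Theorem~\ref{thm3}), and your decomposition ``$n{+}1$ is a fixed point of both $\sigma$ and $\tau$'' versus ``$n{+}1$ is inserted into an existing cycle of each'' is correct: the cycle-minima constraint forces these two cases to be exhaustive, and counting insertion slots does give the factor $(z+n)(w+n)$, since exactly one slot (namely $\sigma^{-1}(n{+}1)=0$) makes $n{+}1$ the first letter of $w_\sigma$ and hence a new record.

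However, your base-case check is wrong, and the error is fatal. You assert that ``all other $(n,k)$ with $n=1$ give the empty set, matching $s_{z,w}(1,k)=0$''. But from the defining product $\prod_{i=0}^{n-1}(x-(i+z)(i+w))$ one has $s_{z,w}(1,0)=-zw$, so $N(1,0)=(-1)^{1}s_{z,w}(1,0)=zw\neq 0$, whereas $\mathcal P_0(1,0)=\varnothing$ and hence $T(1,0)=0$. (The paper's stated boundary condition $s_{z,w}(n,k)=0$ for $k\notin\{1,\dots,n\}$ is inconsistent with the product definition; do not rely on it.) Because the two sequences satisfy the \emph{same} recurrence, this boundary mismatch propagates: for instance at $(n,k)=(2,1)$ a direct enumeration of $\mathcal P_0(2,1)$ (the four pairs of $3$-cycles on $\{0,1,2\}$) gives
\[
T(2,1)=(1+z)(1+w)=1+z+w+zw,
\]
while $(-1)^{2-1}s_{z,w}(2,1)=1+z+w+2zw$, a discrepancy of exactly $zw=N(1,0)$. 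The underlying reason is that in the Jacobi--Stirling setting the factor $[0]_q=0$ forces $\Jc_n^0(z;q)=0$ and the combinatorics matches automatically at $k=0$; in the symmetric generalisation the corresponding factor is $(0+z)(0+w)=zw$, which no longer vanishes. So either the definition of $\mathcal P_0(n,k)$ or the statement of Theorem~\ref{sym1} needs a correction before an inductive proof of this shape can go through; your argument proves the recurrence step cleanly but cannot establish the theorem as stated.
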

%%%%%%%%%%%%%%%%%%%%%%%%%%%
%\section{Wilson numbers}

Let $(a)_n=a(a+1)\ldots (a+n-1)$ for $n\geqslant 1$ and $(a)_0=1$. The Wilson polynomials (see \cite{Koekoek}) are more natural and simply expanded in the basis of polynomials in $x^2$ given by $\{(a+\imath x)_n(a-\imath x)_n\}_{n\geqslant0}$, where $\imath^2=-1$. So, it is legitimate to refer to the pair of set of numbers $\{(w(n,k),W(n,k))\}_{0\leqslant k\leqslant n}$ performing the following change of basis 
\begin{align}
(a+\imath x)_n&(a-\imath x)_n=\sum_{k=0}^n w(n,k) x^{2k},\\
x^{2n}&=\sum_{k=0}^n W(n,k)(a+\imath x)_k(a-\imath x)_k,
\end{align}
as the \emph{Wilson numbers of first and second kind}.
Then, by Newton's interpolation formula we have 
 \begin{align}
w(n,k)&=e_{n-k}(a^2,(a+1)^2,\ldots, (a+n-1)^2),\\
W(n,k)&=\sum_{r=0}^k\frac{(-1)^{n-r}(a+r)^{2n}}{r!(k-r)!(2a+r)_r(2a+2r+1)_{k-r}},
\end{align}
where $e_k(x_1, \ldots, x_n)$ denotes the $k$-th elementary symmetric polynomial of $x_1, \ldots, x_n$.
%
%\begin{proof}
%Since $(a+ix)_n(a-ix)_n=\prod_{k=0}^{n-1}((a+k)^2+x^2)$,
%we derive  from Corollary~\ref{newton1} that  $b_j=-(a+j)^2$, $0\leqslant j\leqslant n-1$. Hence
% \begin{align}
%\frac{b_r^n}{\prod_{j=0\atop j\neq r}^{k} (b_r-b_j)}
%=\frac{(-1)^{n-r}(a+r)^{2n}}{r!(k-r)!(2a+r)_r(2a+2r+1)_{k-r}}.
%%w(n,k)&=(-1)^ne_k(a^2,(a+1)^2,\ldots, (a+n-1)^2).
%\end{align}
%The formula follows from Newton's formula with $f(x)=x^n$.
%\end{proof}
It is readily seen that 
\begin{align}
W(n+1,k+1)&=W(n,k)-(a+k+1)^2W(n,k+1),\\
w(n+1,k+1)&=w(n,k)+(a+n)^2w(n,k+1),
\end{align}
with 
$W({n},{k}) = w({n},{k})= 0 ,\ \text{ if } k \notin \{1,\ldots, n\},$ and $W({0},{0}) = w({0},{0})=1$, $n\geqslant 0$.

It is easy to derive from Theorems \ref{Thm: Stirling in z and w}  and  \ref{sym1} that
\begin{align}
(-1)^{n-k}W(n,k)&=\sum_{\pi \in \Pi(n,k)}a^{s(\pi)+t(\pi)},\\
s_{z,w}(n,k)&=\sum_{(\sigma, \tau)\in \mathcal{P}_0(n,k)}a^{\rec_0(w_\sigma)+\rec_0(w_\tau)}.
\end{align}

\begin{remark} It is worth to notice that the change of basis between $\{(a+\imath x)_n(a-\imath x)_n\}_{n\geqslant0}$ and \linebreak$\{ (x^2 + (a-1)^2)^{n}\}_{n\geqslant0}$ is performed by the aforementioned pair of Jacobi-Stirling numbers \linebreak $\{\big( (-1)^{n+k} \js_{n+1}^{k+1}(2\alpha), \JS_{n+1}^{k+1}(2\alpha) \big)\}_{0\leqslant k\leqslant n}$, see \cite[Remark 3.9]{LouYak13}. Indeed, the latter pair of numbers is associated to the linear change of basis in the vector space of polynomials in the variable $x^2$ performed by the Kontorovich-Lebedev integral transform after a slight modification of the kernel, as described in  \cite{LouYak13}. 

\end{remark}

Of course, we can formally work out a $q$-version of the numbers $S_{z,w}(n,k)$ and  $s_{z,w}(n,k)$
by mimicking our approach to the $q$-Jacobi-Stirling numbers. However,  from analytical point of view, 
an appropriate $q$-version of Wilson numbers should be the 
connection coefficients between the basis $\{x^{n}\}_{n\geqslant 0}$ and the Askey-Wilson basis $\{ (az,a/z;q)_n\}_{n\geqslant 0}$
with  $x=(z+1/z)/2$ \cite[Chap. 15 and 16]{IsmailBook} and 
 $(a,b;q)_n=(a;q)_n(b;q)_n$. Thus, the Askey-Wilson numbers of the second and first kind may be defined by
\begin{align}
 x^{n}&=\sum_{k=0}^n W_q(n,k)(az,a/z;q)_k,\label{AW2}\\
 (az,a/z;q)_n&=\sum_{k=0}^n w_q(n,k) x^{k},\label{AW1}
 \end{align}
where $x=(z+1/z)/2$. 
By Newton's formula we have 
\begin{align}
W_q(n,k)=\frac{1}{2^n}\sum_{j=0}^k q^{k-j^2}a^{-2j} \frac{(q^{j}a+q^{-j}/a)^n}{(q,q^{-2j+1}/a^2; q)_{j} (q,q^{2j+1}a^2;q)_{k-j}}.\label{aw2}
\end{align}
Since  $(az,a/z;q)_n=\prod_{k=0}^{n-1}(1-2axq^k+a^2q^{2k})$, 
we derive from  \eqref{AW1} that
\begin{align}
w_q(n,k)=(-1)^k (2a)^n q^{n\choose 2} e_{n-k}\biggl(\frac{a^{-1}+a}{2}, \frac{(aq)^{-1}+aq}{2}, \ldots, 
\frac{(aq^{n-1})^{-1}+aq^{n-1}}{2}\biggr).
\end{align}
The following recurrence relations follow easily from \eqref{AW2} and \eqref{AW1}: 
\begin{align}
W_q(n+1,k+1)&=-\frac{1}{2aq^k}W_q(n,k)+\frac{1+a^2q^{2k}}{2aq^k}W_q(n,k+1),\\
w_q(n+1,k+1)&=-2aq^n w_q(n,k)+(1+a^2q^{2n})w_q(n,k+1).
\end{align}
Let $T_q(n,k)=(-2a)^kq^{k\choose 2}W_q(n,k)$. Then $T_q(n,k)$ are polynomials in $a$ and $q$ with nonnegative coefficients
satisfying the recurrence
\begin{align}
T_q(n+1, k+1)=T_q(n,k)+(1+a^2q^{2k}) T_q(n,k+1).
\end{align}
Though these numbers should be  interesting because they are
related to the \emph{Askey-Wilson operator},  
their  combinatorics seem to be  quite different from that of Wilson numbers or $q$-Jacobi-Stirling numbers.  We plan  to 
investigate  this topic in a future work. 
 
%%%%%%%%%%%%%%%%%%%%%%%%%%%
%                            Sec:  q-differential equations fulfilled by the $q$-classical polynomials
%%%%%%%%%%%%%%%%%%%%%%%%%%%%
 
 \section{Even order $q$-differential equations fulfilled by the $q$-classical polynomials}\label{Sec: qDiff eq}

To come up with the explicit expressions of $\mathcal{L}_{k;q}$ given in Theorem \ref{Thm: 2k th order diff eq} we do not make use of the analytical properties of the measures of each of the $q$-classical sequences, nor of the properties of the basic hypergeometric corresponding series. (These two approaches would require to split the analysis for each of the $q$-classical sequences). Rather, we base our study on the algebraic approach, within the principles developed in \cite{MaroniTheorieAlg} and in the survey \cite{KherijiMar2002}. Here, the information on the orthogonality measures is hidden in the corresponding linear functionals. This methodology has the merit of gathering all the properties in coherent and consistent framework amongst all the possible $q$-classical cases. The $q$-classical sequences and the corresponding $q$-classical operators will be treated as whole. Based on the properties of the corresponding eigenvalues, there will be essentially two cases to single out: depending on whether the degree of $\Phi$ is less than or equal to 2. Whenever the $q$-classical linear functional admits an integral representation via a weight function, we are able to write $\mathcal{L}_{k;q}$ in a $q$-self adjoint manner, as in Corollary \ref{Cor: Lkq via Uq}. Therefore, we start with preliminary results on the $q$-classical sequences to the sequel.

\subsection{Background}\label{sec: preliminaries}
 
The orthogonality measures will be represented by a moment linear functional that satisfies certain properties. The moment linear functionals (also called {\it forms}, see for instance \cite{MaroniTheorieAlg}) are elements of $\mathcal{P}'$, the dual space of the vector space of polynomials $\mathcal{P}$. We adopt the standard notation of $\langle u,f\rangle$ to consider the action of $u\in\mathcal{P}'$ over $f\in\mathcal{P}$. The sequence of moments of $u\in\mathcal{P}'$ results from its action on the canonical sequence $\{x^{n}\}_{n\geqslant 0}$ and will be denoted as $(u)_{n}:=\langle u,x^{n}\rangle, n\geqslant 0 $.  Any linear operator $T:\mathcal{P}\rightarrow\mathcal{P}$  
has a transpose $^{t}T:\mathcal{P}'\rightarrow\mathcal{P}'$ defined by 
$
	\langle{}^{t}T(u),f\rangle=\langle u,T(f)\rangle\,,\quad u\in\mathcal{P}',\: f\in\mathcal{P}.
$
For example, for any nonzero complex number $a$, any linear functional $u$ and any polynomial $g$, let $ D_qu$ and $gu$ be the forms defined as usual by  
$\langle g u , f \rangle:= \langle u, g f \rangle \ , \ 
	\langle h_{a} u , f \rangle:= \langle u, h_{a} f \rangle = \langle u, f(ax) \rangle $ and 
\begin{align} 
	\label{Transpose Hq} 
	\langle D_{q}u, f \rangle := -\langle u, D_{q}f \rangle   \ \text{ for any }\ f\in\mathcal{P}. 
\end{align}
Thus, with some abuse of notation, $D_q$ acting on linear functionals is minus the transpose of $D_q$ on polynomials: ${}^{t}D_{q}:= - D_{q} $ (see \cite{KherijiMar2002, MaroniTheorieAlg} for more details). For  any $f,g\in\mathcal{P}$ and $u\in\mathcal{P}'$, the following properties hold: 
\begin{align}
& \label{prod fg}
	D_{q}\left( f g\right)(x) 
	= f(qx) \left(D_{q}g\right)(x) + \left(D_{q}f \right)(x)  g(x),
\\
	& \label{prod fu}
	D_{q}\left( f(x) u\right)
	= f(q^{-1}x) \ \left(D_{q}u\right) + q^{-1}(D_{q^{-1}}f )(x)  \ u.
\end{align}
%
%\begin{lem}\cite{KherijiMar2002} The following properties hold for any $f,g\in\mathcal{P}$ and $u\in\mathcal{P}'$ 
%\begin{align}
%& \label{prod fg}
%	D_{q}\left( f g\right)(x) 
%	= f(qx) \left(D_{q}g\right)(x) + \left(D_{q}f \right)(x)  g(x),
%\\
%	& \label{prod fu}
%	D_{q}\left( f(x) u\right)
%	= f(q^{-1}x) \ \left(D_{q}u\right) + q^{-1}(D_{q^{-1}}f )(x)  \ u, \\
%%	&  \big(h_{a} f g \big)(x) =	\big(h_{a} f \big)(x)\;\big(h_{a}g  \big)(x), \quad(\text{with } a\in \mathbb{C}-\{0\}),
%%\label{propt: h a f1 f2}\\
%   & D_{q} \circ h_{q^{-1}} = q^{-1} D_{q^{-1}} \quad \text{in} \; \mathcal{P}  ,
%%   	\label{propt:Hq h q minus 1 in P}
%	\notag\\
%   & D_{q} \circ h_{a} = a \:h_{a} \circ D_{q} \quad \text{in} \; \mathcal{P}  
%   	\quad(\text{with } a\in \mathbb{C}-\{0\}),
%%\label{propt: H q h a  in P}
%	\notag\\
%   & D_{q}\circ D_{q^{-1}} = q^{-1}\: D_{q^{-1}}\circ D_{q} \quad \text{in} \; \mathcal{P}  .
%%   	\label{propt: Hq minus 1 H q  in P}
%\notag
%\end{align}
%\end{lem}
Such properties can be generalised, so that from \eqref{prod fg} we obtain a $q$-Leibniz formula \cite{Hahn1949} for the product of two polynomials 
\begin{equation}\label{qLeibniz f g}
	D_{q}^{n}\left( f g\right)(x) 
	=\sum_{k=0}^{n} \qbinom{n}{k}{q} \big(D_{q}^{n-k}f\big)(q^{k}x) \ \big(D_{q}^{k}g\big)(x)
	\ , \ n\geqslant 0, 
\end{equation}
whilst, from \eqref{prod fu} we deduce the $n$th order $q$-derivative of the product of a polynomial $f$ by a  linear functional  $u$: 
\begin{equation}\label{qLeibniz f u}
	D_{q}^{n}\left( f u\right)
	=\sum_{k=0}^{n} \qbinom{n}{k}{q^{-1}} q^{-(n-k)} \big(D_{q^{-1}}^{n-k}f\big)(q^{-k}x) \ \big(D_{q}^{k}u\big)
	\ , \ n\geqslant 0. 
\end{equation}

The dual sequence $\{u_{n}\}_{n\geqslant 0}$ of a given MPS $\{P_{n}\}_{n\geqslant 0}$ is a subset of $\mathcal{P}'$ whose elements are uniquely defined by $
	\langle u_{n},P_{k}  \rangle := \delta_{n,k}, \; n,k\geqslant 0,
$  
where $\delta_{n,k}$ represents the {\it Kronecker delta} symbol. Any element $u$ of $\mathcal{P}'$ can be written in a series of any dual sequence $\{ u_{n}\}_{n\geqslant 0}$ of a MPS  $\{P_{n}\}_{n\geqslant 0}$ \cite{MaroniTheorieAlg}: 
\begin{equation} \label{u in terms of un}
	u = \sum_{n\geqslant 0} \langle u , P_{n} \rangle \;{u}_{n} \; .
\end{equation}

Given a MPS $\{P_{n}\}_{n\geqslant0}$, let us consider the MPS of its $k$th $q$-derivatives, $\{P_{n}^{[k]}\}_{n\geqslant0}$, defined by 
\begin{equation}\label{def Pn k}
	P_{n}^{[k]}(x) := \frac{1}{ [n+1;q]_{k}} D_{q}^{k} P_{n+k}(x) \ , \ n\geqslant0.
\end{equation}
This definition readily implies a relation between the corresponding dual sequences. Precisely, for any integers $j$ and $k$, with $0\leqslant j\leqslant k$, the dual sequence $\{u_{n}^{[k]}\}_{n\geqslant0}$ of $\{P_{n}^{[k]}\}_{\geqslant0}$  and the dual sequence $\{u_{n}^{[k-j]}\}_{n\geqslant0}$ of $\{P_{n}^{[k-j]}\}_{\geqslant0}$ are related by 
\begin{equation}\label{rel between dual seq un k and un k-j}
	D_{q}^{j}(u_{n}^{[k]}) = (-1)^{j} [n+1;q]_{j}\,u_{n+j}^{[k-j]} 
	\ , \ n\geqslant 0. 
\end{equation}
%\begin{lem}\label{Lemma: rel un k and un k-j} Let $j$ and $k$ be two integers with $0\leqslant j\leqslant k$. The corresponding dual sequences of  $\{P_{n}^{[k]}\}_{\geqslant0}$ and $\{P_{n}^{[k-j]}\}_{\geqslant0}$, denoted by  $\{u_{n}^{[k]}\}_{n\geqslant0}$ and $\{u_{n}^{[k-j]}\}_{n\geqslant0}$, satisfy 
%
%\end{lem}
%
%\begin{proof} From \eqref{def Pn k}, the two polynomial sequences $\{P_{n}^{[k]}\}_{\geqslant0}$ and $\{P_{n}^{[j]}\}_{\geqslant0}$ are related by 
%$$
%	P_{n}^{[k]} (x) = \frac{1}{ [n+1;q]_{j}} 
%	\left( D_{q}^{j} P_{n+j}^{[k-j]} \right)(x)\ , \ n\geqslant 0. 
%$$
%On the grounds of \eqref{Transpose Hq}, the action of the $m^{\text{th}}$ element of the dual sequence $\{u_{n}^{[k]}\}_{n\geqslant0}$ of the MPS $\{P_{n}^{[k]}\}_{\geqslant0}$ on both sides of the latter identity leads to 
%$$
%	\delta_{m,n} 
%	= \frac{(-1)^{j}}{[n+1;q]_{j}} 
%	\langle D_{q}^{j} (u_{m}^{[k]})  \ , \  P_{n+j}^{[k-j]} \rangle\ , \quad \text{for any } n,m\geqslant 0. 
%$$
%Now the result follows from \eqref{u in terms of un}. 
%Therefore, for any $m\in\mathbb{N}_{0}$ we can successively write 
%$$	\begin{array}{lclcl}
%	\ds D_{q}^{j} (u_{m}^{[k]})
%	&=& \ds  \sum_{n\geqslant 0} \langle D_{q}^{j} (u_{m}^{[k]})  \ , \  P_{n}^{[k-j]} \rangle \ u_{n}^{[k-j]}
%	%=  \sum_{n\geqslant 0} (-1)^{j} \biggl( \prod_{\sigma=0}^{j-1} [n-\sigma]_{q}\biggr)
%	%\delta_{m+j,n}  \ u_{n}^{[k-j]} 
%	%	\vspace{0.2cm}\\
%%	&=&
%	=\ds 
%	 (-1)^{j} [m+1;q]_{j}\,u_{m+j}^{[k-j]},
%	\end{array} 
%$$
%whence the result. 
%\end{proof}

Whenever a form $u\in\mathcal{P}'$ is such that there exists a PS  $\{P_{n}\}_{n\geqslant0}$ so that $\langle u , P_{n} P_{m} \rangle = k_{n} \delta_{n,m}$ with $k_{n}\neq0$, for $n,m\geqslant 0$, then $u$ is called a {\it regular} form  \cite{MaroniTheorieAlg,MaroniVariations}. The PS $\{P_{n}\}_{n\geqslant 0}$ is then said to be orthogonal with respect to $u$ and we can assume the system (of orthogonal polynomials) to be monic  and the original form $u$ is proportional to $u_{0}$. There is a unique MOPS $\{P_{n}(x)\}_{n\geqslant 0}$ with respect to the regular form $u_{0}$ and it can be characterised by the (popular) second order recurrence relation (see, for instance, \cite{ChiharaBook})
\begin{align} \label{MOPS rec rel} 
&	\left\{ \begin{array}{@{}l}
		P_{0}(x)=1\,; \quad P_{1}(x)= x-\beta_{0}, \vspace{0.15cm}\\
		P_{n+2}(x) = (x-\beta_{n+1})P_{n+1}(x) - 
		\gamma_{n+1} \, P_{n}(x)\ ,\ n\geqslant 0,  
	\end{array} \right. 
\end{align}
where $\beta_{n}=\frac{\langle u_{0},x P_{n}^2  \rangle}{\langle u_{0}, P_{n}^2  \rangle}$ and $\gamma_{n+1}=\frac{\langle u_{0}, P_{n+1}^2  \rangle}{\langle u_{0}, P_{n}^2  \rangle}$ for all $n\geqslant 0$. In this case, the elements of the corresponding dual sequence $\{u_{n}\}_{n\geqslant 0}$ are given by 
\begin{equation}\label{dual seq of MOPS}
	u_{n}=\left(\langle u_{0},P_{n}^{2}\rangle\right)^{-1} P_{n} u_{0}
	\ , \ n\geqslant 0. 
\end{equation}
When $u\in\mathcal{P}'$ is regular, let $\Phi$ be a polynomial such
that $\Phi\, u=0$, then $\Phi=0$     \cite{MaroniTheorieAlg,MaroniVariations}.

The $q$-classical polynomials are orthogonal sequences (see Definition \ref{def q cla}) that share a number of properties. Among them, we recall: 
\begin{prop}\label{Prop qclassical}\cite{KherijiMar2002} Let $\{P_{n}\}_{n\geqslant 0}$ be an MOPS and $u_{0}$ be the corresponding regular form. The following statements are equivalent:
\begin{enumerate}
\item[(a)] $\{P_{n}\}_{n\geqslant 0}$ is a $q$-classical sequence. 
%\item[(b)] $\{P_{n}^{[1]}\}_{n\geqslant 0}$ is orthogonal. 
\item[(b)] There are two polynomials $\Phi$ and $\Psi$ with $\Phi$ monic, $\deg \Phi\leqslant 2$ and $\deg\Psi=1$ such that $u_{0}$ fulfills 
\begin{equation}\label{u0 Dq classical}
	D_{q}\bigl( \Phi u_{0}\bigr) + \Psi u_{0}=0  .
\end{equation}
\item[(c)]  There exists a sequence of nonzero numbers $\{\chi_n\}_{n\geqslant 1}$, and two polynomials $\Phi$ and $\Psi$ with $\deg \Phi\leqslant 2$, $\deg\Psi=1$ and $\Phi$ monic such that $\{P_{n}\}_{n\geqslant 0}$ fulfils the $q$-differential equation \eqref{Lq def}. 
\item[(d)] There is a monic polynomial $\Phi$ with $\deg \Phi\leqslant 2$ and a sequence of nonzero numbers $\{ \vartheta_{n}\}_{n\geqslant 0}$ such that 
\begin{equation}\label{Rodrigues}
	P_{n} u_{0} =  \vartheta_{n} D_{q}^{n} \biggl( 
			\biggl(\prod_{\sigma=0}^{n-1} q^{-\sigma\deg\Phi} \Phi(q^{\sigma} x)\biggr)\  u_{0}\biggr)
	\ , \ n\geqslant0. 
\end{equation}
\end{enumerate}
\end{prop}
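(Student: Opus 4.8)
The statement to prove is Proposition~\ref{Prop qclassical}, the equivalence of the four characterizations (a)--(d) of $q$-classical MOPS. The plan is to establish a cycle of implications $(a)\Rightarrow(b)\Rightarrow(c)\Rightarrow(a)$, and then $(b)\Leftrightarrow(d)$ separately, since the Rodrigues-type formula in (d) is most naturally tied to the functional equation \eqref{u0 Dq classical}.

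\medskip

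\noindent\emph{$(a)\Rightarrow(b)$.} Starting from Definition~\ref{def q cla}, the sequence $\{P_n^{[1]}\}_{n\geqslant0}$ with $P_n^{[1]}(x)=\frac{1}{[n+1]_q}(D_qP_{n+1})(x)$ is also an MOPS; let $u_0^{[1]}$ be its canonical form. Using the relation \eqref{rel between dual seq un k and un k-j} between the dual sequences (with $k=j=1$), one gets $D_q(u_0^{[1]})=-u_1^{[1]}$, and by \eqref{dual seq of MOPS} applied to $\{P_n^{[1]}\}$ one writes $u_1^{[1]}$ as a polynomial-of-degree-one multiple of $u_0^{[1]}$. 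On the other hand $u_0^{[1]}$ must itself be expressible in terms of the original dual sequence $\{u_n\}$ via \eqref{u in terms of un}; because $\langle u_0^{[1]},P_k^{[1]}\rangle$ forces $u_0^{[1]}$ to be supported on low-degree duals, and using \eqref{rel between dual seq un k and un k-j} again to relate $u_n^{[1]}$ to $u_{n+1}$, one finds $u_0^{[1]}=\Phi\,u_0$ for some polynomial $\Phi$ with $\deg\Phi\leqslant2$ (monic after normalization). Differentiating this identity and matching against the expression for $u_1^{[1]}$ yields precisely $D_q(\Phi u_0)+\Psi u_0=0$ with $\deg\Psi=1$. The degree bounds on $\Phi,\Psi$ come from comparing degrees/leading coefficients on both sides and from the regularity of $u_0$ (if $\deg\Psi\neq1$ the form cannot be regular).

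\medskip

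\noindent\emph{$(b)\Rightarrow(c)$.} Given \eqref{u0 Dq classical}, apply the operator $\mathcal L_q$ of \eqref{Lq def} to $P_n$ and show $\mathcal L_q[P_n]$ has degree $n$ and is orthogonal to $u_0$ against all polynomials of degree $<n$, hence equals $\chi_n P_n$. Concretely, for any $P_m$ with $m<n$ one computes $\langle u_0,\mathcal L_q[P_n]P_m\rangle$ and, using the transpose rules \eqref{Transpose Hq}, \eqref{prod fg}--\eqref{prod fu} together with \eqref{u0 Dq classical}, transfers the operator onto $P_m$, turning the pairing into $\langle u_0, P_n \cdot(\text{polynomial of degree}<n)\rangle=0$. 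The leading coefficient computation gives $\chi_n$ as in \eqref{eigenval Bohner chi n}, and regularity guarantees $\chi_n\neq0$ for $n\geqslant1$.

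\medskip

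\noindent\emph{$(c)\Rightarrow(a)$.} From the eigenvalue equation \eqref{Lq def}, apply $D_q$ to both sides and use the commutation/product identities to show $P_n^{[1]}$ satisfies an analogous second-order $q$-differential equation with a shifted $(\Phi,\Psi)$; then invoke a Bochner-type converse (an MPS satisfying such an equation with distinct nonzero eigenvalues is necessarily orthogonal) to conclude $\{P_n^{[1]}\}$ is an MOPS, i.e.\ $\{P_n\}$ is $q$-classical. The nonvanishing hypothesis on $\{\chi_n\}$ is exactly what makes the eigenvalues of the shifted operator distinct. Finally, \emph{$(b)\Leftrightarrow(d)$}: iterate \eqref{prod fu} / use the $q$-Leibniz formula \eqref{qLeibniz f u} to show that \eqref{u0 Dq classical} is equivalent to $D_q\bigl((\prod_{\sigma=0}^{n-1}q^{-\sigma\deg\Phi}\Phi(q^\sigma x))u_0\bigr)$ being a degree-$n$ polynomial multiple of $u_0$, which after checking degrees and using \eqref{dual seq of MOPS} gives \eqref{Rodrigues} with explicitly determined $\vartheta_n\neq0$; conversely \eqref{Rodrigues} with $n=1$ is \eqref{u0 Dq classical} up to normalization.

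\medskip

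\noindent The main obstacle is the implication $(a)\Rightarrow(b)$: extracting the functional equation from the mere fact that the $q$-derivatives form an MOPS requires carefully juggling the dual-sequence identities \eqref{rel between dual seq un k and un k-j}, \eqref{u in terms of un} and \eqref{dual seq of MOPS} to pin down that $u_0^{[1]}$ is exactly $\Phi u_0$ with the right degree bound, and the $q$-deformation makes the bookkeeping of powers of $q$ in \eqref{prod fu}/\eqref{qLeibniz f u} delicate. Since this is the classical Hahn characterization adapted to the Jackson derivative and is fully carried out in \cite{KherijiMar2002}, in the paper I would cite that reference for the routine verifications and only indicate the key identities used, as the proposition is quoted as background.
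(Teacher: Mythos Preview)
The paper does not give a proof of Proposition~\ref{Prop qclassical}: it is stated as background and attributed directly to \cite{KherijiMar2002}, with no argument supplied. Your final paragraph already recognizes this, and that is the correct treatment; the cycle of implications you sketch is a reasonable outline of how the Kh\'eriji--Maroni proof proceeds, but there is nothing in the present paper to compare it against.
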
 

Whenever $\{P_{n}\}_{n\geqslant 0}$ is ${q}$-classical, then so is $\{P_{n}^{[k]}\}_{n\geqslant 0}$, for any positive integer $k$ and we have:

\begin{cor}\label{cor: Pn k Dq classical} \cite{KherijiMar2002} Let $k$ be a positive integer. If $\{P_{n}\}_{n\geqslant 0}$ is ${q}$-classical, then so is $\{P_{n}^{[k]}\}_{n\geqslant 0}$. The corresponding ${q}$-classical form $u_{0}^{[k]}$ fulfills 
$ \quad
	D_{q}\biggl( \Phi_{k} u_{0}^{[k]}\biggr) + \Psi_{k} u_{0}^{[k]} = 0 \quad 
$
with 
$\quad 
	\Phi_{k}(x)= q^{-k\deg\Phi} \Phi(q^{k} x)$,    	
$\quad 	\Psi_{k}(x) = q^{-k\deg\Phi} \biggl( \Psi(x) - [k]_{q} \ \bigl(D_{q^{k}} \Phi\bigr)(x)\biggr)   
$ 
and it is related to $u_{0}$ via 
$\ 	u_{0}^{[k]}
	= \zeta_{k} \biggl( \prod_{\sigma=0}^{k-1}  \Phi_\sigma(x)  \biggr) u_{0},  
\ $
where $\zeta_{k}$ is such that $(u_{0}^{[k]})_{0}=1$, with  $\zeta_{0}=1$. 
Moreover,  the sequence $\{P_{n}^{[k]}\}_{n\geqslant 0}$  fulfills 
\begin{equation} \label{qBochner eq for kth qderivatives}
	\Phi_{k}(x) D_{q}\circ D_{q^{-1}} \left(P_{n}^{[k]}(x) \right) 
	- \Psi_{k}(x)D_{q^{-1}} \left(P_{n}^{[k]}(x) \right) 
	= \chi_{n}^{[k]} P_{n}^{[k]}(x) , 
\end{equation}
where  
$$
	\chi_{n}^{[k]} =  [n]_{q^{-1}}\left( [n-1]_{q} \tfrac{\Phi_{k}''(0)}{2} - \Psi_{k}'(0) \right) 
= \left\{ \begin{array}{ll@{}l@{}}  
	 - [n]_{q^{-1}} q^{-(\deg \Phi )k}\ \Psi'(0) 
	 	& \text{if }  \deg\Phi=0,1, \vspace{0.2cm}\\
	~\frac{ [n]_{q^{-1}}}{q^{2k+1}}  \Big(  [n+2k]_{q} 
		-\left( 1 + q    \Psi'(0)\right) \Big) 
		& \text{if }  \deg\Phi=2, & \text{for }n\geqslant 0.
	\end{array}\right. 
$$
\end{cor}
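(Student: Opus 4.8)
The plan is to argue by induction on $k$, the case $k=1$ carrying all the substance. From \eqref{def Pn k} one has $P_n^{[k]}=\bigl(P^{[1]}\bigr)_n^{[k-1]}$, so it will be enough to prove the statement for $k=1$ and then verify that the assignment $(\Phi,\Psi)\mapsto(\Phi_1,\Psi_1)$ iterates correctly: applying the $k=1$ recipe to $(\Phi_j,\Psi_j)$ returns $(\Phi_{j+1},\Psi_{j+1})$, and the forms multiply, $u_0^{[k]}=\zeta_k\bigl(\prod_{\sigma=0}^{k-1}\Phi_\sigma\bigr)u_0$. Both are elementary $q$-calculus identities — the second rests on $q^{-\deg\Phi}\Phi_j(qx)=\Phi_{j+1}(x)$ and on $[j]_qD_{q^j}\Phi+q^{j\deg\Phi}D_q\Phi_j=[j+1]_qD_{q^{j+1}}\Phi$.

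For $k=1$ the crucial step is a $q$-integration-by-parts identity:
\[\langle\Phi\,u_0,(D_qf)(D_qg)\rangle=-\langle u_0,\mathcal{L}_q[f]\,g\rangle\qquad(f,g\in\mathcal{P}).\]
I would prove it by expanding $\mathcal{L}_q[f]=\Phi D_q(D_{q^{-1}}f)-\Psi D_{q^{-1}}f$, pushing the outer $D_q$ off $D_{q^{-1}}f$ via the product rule \eqref{prod fg} and the transpose rule \eqref{Transpose Hq}, and then using the functional equation $D_q(\Phi u_0)=-\Psi u_0$ together with the elementary fact $(D_{q^{-1}}f)(qx)=(D_qf)(x)$; it is precisely this last identity that makes the term produced by the product rule fold back into $\langle\Phi u_0,(D_qf)(D_qg)\rangle$. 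Taking $f=P_{n+1}$, $g=P_{m+1}$ and using $\mathcal{L}_q[P_{n+1}]=\chi_{n+1}P_{n+1}$ from \eqref{Lq def} gives
\[\langle\Phi\,u_0,\,P_n^{[1]}P_m^{[1]}\rangle=-\frac{\chi_{n+1}}{[n+1]_q^{\,2}}\,\langle u_0,P_{n+1}^2\rangle\,\delta_{n,m},\]
which is nonzero on the diagonal since $\chi_{n+1}\neq0$ (Proposition \ref{Prop qclassical}(c)) and $u_0$ is regular. Hence the monic sequence $\{P_n^{[1]}\}$ is orthogonal with respect to $\Phi u_0$; as an MOPS pins down its orthogonalising form up to a nonzero scalar (by \eqref{u in terms of un}), this forces $u_0^{[1]}=\zeta_1\,\Phi u_0$, with $\zeta_1$ determined by $(u_0^{[1]})_0=1$.

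Next I would derive the functional equation for $u_0^{[1]}$ by applying the Leibniz rule \eqref{prod fu} to $D_q(\Phi_1u_0^{[1]})=\zeta_1D_q(\Phi_1\Phi\,u_0)$ and simplifying with $\Phi_1(q^{-1}x)=q^{-\deg\Phi}\Phi(x)$, $(D_{q^{-1}}\Phi_1)(x)=q^{1-\deg\Phi}(D_q\Phi)(x)$ and $D_q(\Phi u_0)=-\Psi u_0$; this yields $D_q(\Phi_1u_0^{[1]})=-\Psi_1u_0^{[1]}$. Here $\Phi_1$ is monic of the degree of $\Phi$, and $\deg\Psi_1=1$ because $\Psi_1'(0)$ equals $q^{-\deg\Phi}\Psi'(0)$ for $\deg\Phi\leq1$ and $q^{-2}(\Psi'(0)-[2]_q)$ for $\deg\Phi=2$, both nonzero since the hypothesis that \emph{every} $\chi_n$ $(n\geq1)$ is nonzero forces $\Psi'(0)\notin\{[m]_q:m\geq0\}$. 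Thus Proposition \ref{Prop qclassical}(b) applies and $\{P_n^{[1]}\}$ is $q$-classical; the induction then gives, for all $k\geq1$, that $\{P_n^{[k]}\}$ is $q$-classical with form $u_0^{[k]}=\zeta_k\bigl(\prod_{\sigma=0}^{k-1}\Phi_\sigma\bigr)u_0$ obeying $D_q(\Phi_ku_0^{[k]})+\Psi_ku_0^{[k]}=0$.

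Finally, \eqref{qBochner eq for kth qderivatives} is the implication (b)$\Rightarrow$(c) of Proposition \ref{Prop qclassical} applied to the $q$-classical pair $(\Phi_k,\Psi_k)$, with eigenvalue given by \eqref{eigenval Bohner chi n} after replacing $(\Phi,\Psi)$ by $(\Phi_k,\Psi_k)$, i.e.\ $\chi_n^{[k]}=[n]_{q^{-1}}\bigl([n-1]_q\tfrac{\Phi_k''(0)}{2}-\Psi_k'(0)\bigr)$; it then remains to evaluate $\Phi_k''(0)=q^{(2-\deg\Phi)k}\Phi''(0)$ (equal to $2$ if $\deg\Phi=2$, and $0$ otherwise) and $\Psi_k'(0)=q^{-k\deg\Phi}(\Psi'(0)-[k]_q(D_{q^k}\Phi)'(0))$ with $(D_{q^k}\Phi)'(0)$ equal to $q^k+1$ if $\deg\Phi=2$ and $0$ if $\deg\Phi\leq1$, and to simplify using $[k]_q(q^k+1)=[2k]_q$ (and $q^{2k+1}[n-1]_q+q[2k]_q=q([n+2k]_q-1)$ in the quadratic case). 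The hard part will be getting the $q$-integration-by-parts identity exactly right — the signs and the $q$-shifts there are the only genuinely delicate point; everything afterwards is bookkeeping.
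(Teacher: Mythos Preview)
The paper does not supply a proof of this corollary; it is quoted from \cite{KherijiMar2002} as background. Your proposal is correct and self-contained: the $q$-integration-by-parts identity $\langle\Phi u_0,(D_qf)(D_qg)\rangle=-\langle u_0,\mathcal{L}_q[f]\,g\rangle$ indeed follows from \eqref{prod fg}, \eqref{Transpose Hq}, the functional equation \eqref{u0 Dq classical}, and $(D_{q^{-1}}f)(qx)=(D_qf)(x)$ exactly as you outline, and it immediately gives orthogonality of $\{P_n^{[1]}\}$ with respect to $\Phi u_0$; the derivation of the functional equation for $u_0^{[1]}$ and the iteration $(\Phi_j,\Psi_j)\mapsto(\Phi_{j+1},\Psi_{j+1})$ are then routine, as is the eigenvalue computation via \eqref{eigenval Bohner chi n}.
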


%Observe that the eigenvalues  $\chi_{n}^{[k]}$ can be alternatively written as in \eqref{eigenval Bohner k eq split},  by taking into consideration the expressions for $\Phi_k$ and $\Psi_k$ in \eqref{Phi Psi k} and the fact that $\deg\Phi\leqslant 2$. 

Observe that the nonzero numbers  $\vartheta_{n}$  in the Rodrigues type formula \eqref{Rodrigues} are related to the coefficients $\zeta_k$ in Corollary \ref{cor: Pn k Dq classical} via (see \cite[p.65]{KherijiMar2002} for further details): 
$$
	\vartheta_{n} \quad = \quad \frac{ (-1)^{n}}{[n]_{q}!} <u_{0},P_{n}^{2}> \zeta_{n}  
		\quad = \quad q^{-\frac{n(n-1)}{2} \deg\Phi} \  [n]_q! \left( \prod_{\sigma=1}^n \chi_\sigma^{[n-\sigma] }
	\right)^{-1},  \ n\geqslant 0.
$$
In addition, we also note that if $\deg \Phi=2$, then $
	\chi_{n-k}^{[k]} = q^{k-1}\Big( 
		[n]_{q^{-1}} (z+[n]_q) - [k]_{q^{-1}} (z+[k]_q)
	\Big)
$, for any $0\leqslant k\leqslant n$, where $ \ z= -(1 +q\Psi'(0)) $.

\subsection{Even order $q$-differential equations fulfilled by $q$-classical polynomials}

In this section we prove Theorem \ref{Thm: 2k th order diff eq}, where we will mainly follow an algebraic approach. Afterwards, we explain the cases where the even order $q$-differential operator $\mathcal{L}_{k;q}$ admits a representation in a $q$-self adjoint version, as stated in Corollary \ref{Cor: Lkq via Uq}.

\begin{proof}[{\bf Proof of Theorem \ref{Thm: 2k th order diff eq}}]
In the light of  Corollary \ref{cor: Pn k Dq classical}, the $q$-classical character of the sequence $\{P_{n}\}_{n\geqslant 0}$ implies the $q$-classical character of $\{P_{n}^{[m]}\}_{n\geqslant 0}$, no matter the choice of the nonnegative integer $m$. This encloses the orthogonality of each of the sequences $\{P_{n}^{[m]}\}_{n\geqslant 0}$ with respect to $u_0^{[m]}$ and therefore the elements of the corresponding dual sequences, say   $\{u_{n}^{[m]}\}_{n\geqslant 0}$,  can be written as 
$u_n^{[m]} = \left( < u_{0}^{[m]} ,  \left(P_{n}^{[m]}\right)^{2}>\right)^{-1}  P_{n}^{[m]} u_0^{[m]}$ for any integer $n\geqslant 0$. Consequently, the relation  \eqref{rel between dual seq un k and un k-j} becomes
\begin{equation}\label{rel Pnu0 j and Pnu0 k}
	D_{q}^{\nu}(P_{n}^{[k]}u_{0}^{[k]}) = 
		\varpi_{n}(k,k-\nu) 
		P_{n+\nu}^{[k-\nu]}u_{0}^{[k-\nu]} 
	\ , \ n\geqslant 0,
\end{equation}
where 
\begin{equation}\label{coef omega n k k-j}
	\varpi_{n}(k,k-\nu)= (-1)^{\nu}  [n+1;q]_{\nu}
				\frac{< u_{0}^{[k]} ,  \left(P_{n}^{[k]}\right)^{2}>}
				{ < u_{0}^{[k-\nu]} ,  \left(P_{n+\nu}^{[k-\nu]}\right)^{2}> } 
				\ , \ n\geqslant 0. 
\end{equation}

In this case, the corresponding regular forms $u_{0}^{[\nu]}$, with $\nu=0,1,\ldots,k $, satisfy the $q$-differential equation 
\begin{equation}\label{q classical functional eq nu}
	D_q\left(  \Phi_{\nu}(x) \ u_{0}^{[\nu]} \right) + \Psi_{\nu}(x) \ u_{0}^{[\nu]} =0, 
\end{equation} where $\Phi_{\nu}$ and $\Psi_{\nu}$ are given in Corollary \ref{cor: Pn k Dq classical}, respectively. By recalling \eqref{prod fu}, then, based on the fact that $u_{0}^{[k-1]} $ satisfy \eqref{q classical functional eq nu} with $\nu=k-1$, we  successively obtain
\begin{align*}
	& < \Phi_{k-1}(x) \ u_{0}^{[k-1]} ,  x^mP_{n}^{[k]} > 
	\quad = \quad  \tfrac{-1}{[n+1]_q}  	<D_q\left(  x^m\ \Phi_{k-1}(x) \ u_{0}^{[k-1]} \right) ,   P_{n+1}^{[k-1]} > \\
	& \quad =  \quad  \tfrac{-1}{[n+1]_q}  \ 	< u_{0}^{[k-1]} 
				,  \Big(- \Psi_{k-1}(x)  q^{-m}x^m 
	 +  q^{-1} \Phi_{k-1}(x)[m]_{q^{-1}}x^{m-1}  \Big)  P_{n+1}^{[k-1]} >
	 , \ m,n\geqslant 0. 
\end{align*}
By virtue of the orthogonality of  $\{P_{n}^{[k-1]}\}_{n\geqslant 0}$ with respect to $u_{0}^{[k-1]}$ and because  of the degree of the polynomials $\Phi_{k-1}$ and $\Psi_{k-1}$, we deduce 
$$
	\begin{array}{@{}l@{}}
		< \Phi_{k-1}(x) \ u_{0}^{[k-1]} ,  x^m P_{n}^{[k]} > \\
		\qquad= \left\{\begin{array}{lcl}
			0 ,& m\leqslant n-1 , \  \ n\geqslant 1, \\
			- \frac{q^{-n} }{[n+1]_q}    \left( [n]_q \frac{\Phi_{k-1}''(0)}{2} - \Psi_{k-1}'(0) \right) \ <   u_{0}^{[k-1]} , ( P_{n+1}^{[k-1]})^2 >  
			,& m=n,  \ n\geqslant 0, 
		\end{array}\right. 
\end{array}
$$
Since  $\ \Phi_{\nu-1}(x) u_0^{[\nu-1]} = \frac{\zeta_{\nu-1}}{\zeta_\nu} u_0^{[\nu]} \   $, for $\nu\geqslant 1$, where $\zeta_\nu$ is such that $(u_0^{[\nu]})_0=1$, then we can write 
$$
	\frac{<   u_{0}^{[k]} ,  (P_{n}^{[k]})^2 > }{<   u_{0}^{[k-1]} , ( P_{n+1}^{[k-1]})^2 > }
	= - \frac{\zeta_k}{\zeta_{k-1}} \ \frac{ q^{-n}}{ [n+1]_{q}} \left( [n]_q \dfrac{\Phi_{k-1}''(0)}{2} - \Psi_{k-1}'(0) \right), \quad n\geqslant 0, 
$$
which, by finite induction, leads to 
$$
	\frac{<   u_{0}^{[k]} ,  (P_{n}^{[k]})^2 > }{<   u_{0}^{[k-\nu]} ,  (P_{n+\nu}^{[k-\nu]})^2 > }
	= (-1)^{\nu} \frac{\zeta_k}{  \zeta_{k-\nu}} \left( \prod_{\sigma=1}^\nu \frac{q^{-(n+\sigma-1)}}{[n+\sigma]_{q}}
	 \left( [n+\sigma-1]_q \dfrac{\Phi_{k-\sigma}''(0)}{2} - \Psi_{k-\sigma}'(0) \right) 	   \right) , \ n\geqslant 0.
$$
Therefore, the coefficients  $\varpi_{n}(k,\nu)$ in \eqref{coef omega n k k-j} can also be expressed as follows
\begin{equation} \label{omega n k k-nu via lambda}
	\varpi_{n}(k,k-\nu) 
	=  \frac{\zeta_k}{  \zeta_{k-\nu}} \left( \prod_{\sigma=1}^\nu 
	q^{-(n+\sigma-1)} \left( [n+\sigma-1]_q \dfrac{\Phi_{k-\sigma}''(0)}{2} - \Psi_{k-\sigma}'(0) \right)	  \right) 
	 \ , \ n\geqslant 0,
\end{equation}
so that from Equation \eqref{rel Pnu0 j and Pnu0 k}  we obtain
$$
	D_{q}^{\nu}\Big(  
		\ P_{n}^{[k]} \ u_{0}^{[k]}  \Big) 
	=  \frac{\zeta_k}{  \zeta_{k-\nu}} 
		\left( \prod_{\sigma=1}^\nu 
	q^{-(n+\sigma-1)} \left( [n+\sigma-1]_q \dfrac{\Phi_{k-\sigma}''(0)}{2} - \Psi_{k-\sigma}'(0) \right)	  \right) 
		P_{n+\nu}^{[k-\nu]} u_{0}^{[k-\nu]} 
	\ , \ n\geqslant 0. 
$$
Substituting $u_0^{[k-\nu]}$ on the right hand side by its expression given in Corollary \ref{cor: Pn k Dq classical}, we obtain 
$$
	D_{q}^{\nu}\left(  
		\ P_{n}^{[k]} \ u_{0}^{[k]}  \right) 
	=  {\zeta_k} 
		\left( \prod_{\sigma=1}^\nu 
	q^{-(n+\sigma-1)} \left( [n+\sigma-1]_q \dfrac{\Phi_{k-\sigma}''(0)}{2} - \Psi_{k-\sigma}'(0) \right)	  \right) 
	 \ \Big(\prod_{\sigma=0}^{k-\nu-1} \Phi_{\sigma}\Big)
		P_{n+\nu}^{[k-\nu]} (x)
		\ u_{0},
$$
which can be simplified into 
\begin{equation}\label{D nu of Dk of Pn+k u0 k}
	D_{q}^{\nu}\Big(  
		\ \big(D_{q}^{k} P_{n+k}\big)(x) \ u_{0}^{[k]}  \Big) 
	=  {\zeta_k} 
		\left( \prod_{\sigma=1}^\nu 
	\chi_{n+\sigma }^{[k-\sigma]}	  \right) 
	 \ \left(\prod_{\sigma=0}^{k-\nu-1} \Phi_{\sigma}(x)\right)
		\big(D_{q}^{k-\nu} P_{n+k}\big)(x)
		\ u_{0}, \ n\geqslant 0, 
\end{equation}
if we take into consideration the definition of $\{P_{n}^{[\mu]}\}_{n\geqslant 0}$ given in \eqref{def Pn k} alongside with the expression for $\chi_{n }^{[k]}$ given in Corollary \ref{cor: Pn k Dq classical}. In particular, we take   $n=0$ in \eqref{D nu of Dk of Pn+k u0 k} to get 
\begin{equation}\label{D nu of u0 k} 
	D_{q}^{\nu}\left(  
	 u_{0}^{[k]}  \right) 
	=  {\zeta_k} 
		\left( \prod_{\sigma=1}^\nu 
	\chi_{\sigma }^{[k-\sigma]}	  \right) 
	 \ \left(\prod_{\sigma=0}^{k-\nu-1} \Phi_{\sigma}\right)
		P_{\nu}^{[k-\nu]} 
		\ u_{0}
	\ , \quad n\geqslant 0.
\end{equation}
Using the functional version of the $q$-Leibniz rule \eqref{qLeibniz f u}, the left-hand-side of  \eqref{D nu of Dk of Pn+k u0 k} can be rewritten as  
$$
	D_{q}^{k}\left( D_{q}^{k} P_{n+k}(x)\  u_{0}^{[k]}  \right)
	=\sum_{\nu=0}^{k} \qbinom{k}{\nu}{q^{-1}} q^{-(k-\nu)} 
	\Big(D_{q^{-1}}^{k-\nu} \circ D_{q}^{k}\ P_{n+k} \Big)(q^{-\nu}x) \ \big(D_{q}^{\nu}u_{0}^{[k]} \big)
	\ , \ n\geqslant 0,
$$
which, after \eqref{D nu of u0 k}, can be expressed in terms on $u_0$:  
\begin{eqnarray*}
	D_{q}^{k}\left( D_{q}^{k} P_{n+k}(x)\  u_{0}^{[k]}  \right) 
	&=&\sum_{\nu=0}^{k} \qbinom{k}{\nu}{q^{-1}} q^{-(k-\nu)}  
	 {\zeta_k} 
		\Big( \prod_{\sigma=1}^\nu 
	\chi_{\sigma }^{[k-\sigma]}	  \Big) 
	  \Big(\prod_{\sigma=0}^{k-\nu-1} \Phi_{\sigma}(x)\Big) \\
	&&	\times \ P_{\nu}^{[k-\nu]}(x) 
		\Big(D_{q^{-1}}^{k-\nu} \circ D_{q}^{k}\ P_{n+k} \Big)(q^{-\nu}x) \ u_{0}
	\ , \ n\geqslant 0.
\end{eqnarray*}
Bringing this expression into \eqref{D nu of Dk of Pn+k u0 k}, with $\nu$ replaced by $k$, we obtain
\begin{align*}
	&\sum_{\nu=0}^{k} \qbinom{k}{\nu}{q^{-1}} q^{-(k-\nu)}  \ 
		\Big( \prod_{\sigma=1}^\nu 
	 \chi_{\sigma}^{[ k-\sigma]}    \Big) 
	 \ \Big(\prod_{\sigma=0}^{k-\nu-1} \Phi_{\sigma}(x)\Big)
		P_{\nu}^{[k-\nu]} \ 
		\Big(D_{q^{-1}}^{k-\nu} \circ D_{q}^{k}\ P_{n+k} \Big)(q^{-\nu}x) \ u_{0} \\
	& = 
		\Big( \prod_{\sigma=1}^k 
	 \chi_{n+\sigma}^{[k-\sigma]}	  \Big) 
		  P_{n+k}(x) 
		\ u_{0}
	\ , \ n\geqslant 0. 
\end{align*}
According to the regularity of $u_0$, it follows that $y(x)=  P_{n+k}(x) $ satisfies the $q$-differential equation 
$$\sum_{\nu=0}^{k} \Lambda_{k,\nu}(x;q) 
		\bigg(D_{q^{-1}}^{k-\nu} \circ D_{q}^{k}\ y \bigg)(q^{-\nu}x)  
	\ =  \ \Xi_{n}(k;q)\  y(x),
$$
where 
$$
\Lambda_{k,\nu}(x;q)  
	= \qbinom{k}{\nu}{q^{-1}} q^{-(k-\nu)}  \ 
		\Big( \prod_{\sigma=1}^\nu 
	 \chi_{\sigma}^{[ k-\sigma]} 	  \Big) 
	 \ \Big(\prod_{\sigma=0}^{k-\nu-1}q^{-\sigma\deg\Phi } \Phi(q^{\sigma}x)\Big)
		P_{\nu}^{[k-\nu]}(x)
$$
 for $ \nu=0,1,\ldots , k$, and 
$$
	 \Xi_{n}(k;q) =  \prod_{\sigma=0}^{k-1}  \chi_{n-\sigma}^{[\sigma]}
%	 \prod_{\sigma=0}^{k-1}   \left\{
%	 	[n - \sigma]_{q^{-1}} \left( \left( [n - \sigma-1]_{q} + q^{-\sigma\deg\Phi} [2\sigma]_{q} \right) \frac{\Phi''(0)}{2} 
%		-  q^{-\sigma\deg\Phi}  \Psi'(0)\right) 
%				 	  \right\}
 	  \ , \ n\geqslant 0, 
$$
with $\chi_{n}^{[k]}$ given in Corollary \ref{cor: Pn k Dq classical}. Hence, we get the result if we observe that  
$
P_{\nu}^{[k-\nu]}(x) := \frac{1}{ [\nu+1;q]_{k-\nu}} D_{q}^{k-\nu} P_{k}(x),  
$
and that 
$
	 \qbinom{k}{\nu}{q^{-1}} \frac{q^{-(k-\nu)}}{ [\nu+1;q]_{k-\nu}}
	 	 = \frac{q^{-(k-\nu)(\nu+1)}}{[k-\nu]_q!}, 
$  for  any integers $k,\nu$ such that $0\leqslant \nu\leqslant k$. 
\end{proof}

From the proof of Theorem \ref{Thm: 2k th order diff eq}, we readily deduce a generalisation of the Rodrigues-type formula  \eqref{Rodrigues} (which is recovered  if we set $n=k$ in the identity \eqref{even order diff funct eq} given below). 

\begin{cor} For any positive integer $k$, a $q$-classical polynomial sequence $\{P_{n}\}_{n\geqslant 0}$, orthogonal with respect to the $q$-classical linear functional $u_{0}$, fulfils the relation 
\begin{equation}\label{even order diff funct eq}
	D_{q}^{k}\Bigl(  
		\  \Bigl(\prod_{\sigma=0}^{k-1} \Phi_{\sigma}(x)\Bigr) \ \bigl(D_{q}^{k} P_{n}\bigr)(x) \ u_0  \Bigr) 
	=  \Xi_{n} (k;q) P_{n}(x)
		\ u_{0}
	\ , \text{ for }\ n\geqslant 0,
\end{equation}
where $\Phi_\sigma(x)$ are the monic polynomials given in Corollary \ref{cor: Pn k Dq classical} and 
$ \Xi_n (k;q)$ are the eigenvalues given by \eqref{Xi n k expression}.
\end{cor}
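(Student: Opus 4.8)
The plan is to extract the Corollary directly from the computation already performed in the proof of Theorem~\ref{Thm: 2k th order diff eq}, rather than to start afresh. The essential identity is \eqref{D nu of Dk of Pn+k u0 k}, which was derived there for all $n\geqslant 0$ and all $0\leqslant \nu\leqslant k$. First I would take $\nu=k$ in \eqref{D nu of Dk of Pn+k u0 k}: this immediately yields
$$
	D_{q}^{k}\Bigl( \big(D_{q}^{k} P_{n+k}\big)(x)\, u_{0}^{[k]} \Bigr)
	= \zeta_k\,\Bigl( \prod_{\sigma=1}^{k}\chi_{n+\sigma}^{[k-\sigma]} \Bigr)\, P_{n+k}(x)\, u_0,
$$
since for $\nu=k$ the product $\prod_{\sigma=0}^{k-\nu-1}\Phi_\sigma(x)$ is empty and $P_{n+k}^{[0]}=P_{n+k}$. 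Relabelling $n+k\mapsto n$ (so that the statement holds for $n\geqslant k$, and trivially for $n<k$ since then $D_q^k P_n=0$ and $\Xi_n(k;q)$ contains a vanishing factor $\chi_{0}^{[\sigma]}=0$), the product $\prod_{\sigma=1}^{k}\chi_{n-k+\sigma}^{[k-\sigma]}$ becomes, after the substitution $\sigma\mapsto k-\sigma$, exactly $\prod_{\sigma=0}^{k-1}\chi_{n-\sigma}^{[\sigma]}=\Xi_n(k;q)$ by the definition \eqref{Xi n k expression}.

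Next I would eliminate the auxiliary form $u_0^{[k]}$ in favour of $u_0$. By Corollary~\ref{cor: Pn k Dq classical} we have $u_0^{[k]}=\zeta_k\bigl(\prod_{\sigma=0}^{k-1}\Phi_\sigma(x)\bigr)u_0$; substituting this on the left-hand side and cancelling the common nonzero constant $\zeta_k$ on both sides gives precisely \eqref{even order diff funct eq}. The only remaining bookkeeping is to match the two ways of writing the eigenvalue constant: that $\prod_{\sigma=0}^{k-1}\chi_{n-\sigma}^{[\sigma]}$ equals the closed form in \eqref{Xi n k expression} was already established in the proof of Theorem~\ref{Thm: 2k th order diff eq}, so I would simply cite it.

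Finally I would note the claimed specialisation: setting $n=k$ collapses $\prod_{\sigma=0}^{k-1}\Phi_\sigma(x)$ against $(D_q^k P_k)(x)=[k]_q!$ (a constant), and $\Xi_k(k;q)=\prod_{\sigma=0}^{k-1}\chi_{k-\sigma}^{[\sigma]}$, recovering the Rodrigues-type identity \eqref{Rodrigues} up to the constant $\vartheta_k$, whose value in terms of the $\chi_\sigma^{[k-\sigma]}$ is exactly the one recorded just after Corollary~\ref{cor: Pn k Dq classical}. I do not anticipate a genuine obstacle here: the entire content is already inside the proof of Theorem~\ref{Thm: 2k th order diff eq}, and the only mild care needed is the index relabelling in the product of eigenvalues and checking that the degenerate range $0\leqslant n<k$ is covered (both sides vanish there). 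The main "delicate" point, such as it is, is making the substitution $\sigma\mapsto k-\sigma$ in $\prod_{\sigma=1}^{k}\chi_{n-k+\sigma}^{[k-\sigma]}$ carefully so that it lines up with the definition $\Xi_n(k;q)=\prod_{\sigma=0}^{k-1}\chi_{n-\sigma}^{[\sigma]}$.
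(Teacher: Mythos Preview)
Your proposal is correct and follows exactly the route taken in the paper: set $\nu=k$ in \eqref{D nu of Dk of Pn+k u0 k} and replace $u_0^{[k]}$ by $\zeta_k\bigl(\prod_{\sigma=0}^{k-1}\Phi_\sigma(x)\bigr)u_0$ via Corollary~\ref{cor: Pn k Dq classical}. You have in fact written out more detail than the paper (the index reshuffle $\sigma\mapsto k-\sigma$ and the check that both sides vanish for $0\leqslant n<k$), but the underlying argument is identical.
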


\begin{proof} The result is an immediate consequence of the relation \eqref{D nu of Dk of Pn+k u0 k}, after setting $\nu=k$ and using the expression for $u_0^{[k]}$ given in Corollary \ref{cor: Pn k Dq classical}. 
\end{proof}

Despite \eqref{even order diff funct eq} is actually a functional relation, in some cases it can be written as a $q$-differential relation. This occurs whenever $u_0$ admits an integral representation via a weight function, $W_q(x)$ say, as regular as necessary,  so that: 
\begin{equation} \label{int rep of u0}
	\langle u_0 , f \rangle 
	= \int_{-\infty}^{+\infty} f(x) W_{q}(x) \ \text{d}x 
	,\ \text{ for any } \ f\in\mathcal{P}. 
\end{equation}
This weight function $W_q$ is such that the integral on the RHS of the precedent identity must exist for any polynomial $f$, and must represent the regular form $u_0$. For this reason, $W_q$ needs to be continuous at the origin or such that the integral 
$$
	\lim_{\epsilon \rightarrow 0+}	 \int_{\epsilon}^{1} \frac{W_{q}(x) - W_q(-x)}{x} \text{d}x 
$$
exists and, in addition, the fact that $u_0$ fulfils \eqref{u0 Dq classical} implies that $W_q$ must fulfill the $q$-differential equation  \eqref{eq Uq} (a detailed justification can be found in \cite[p.81-82]{KherijiMar2002}). 

Whenever \eqref{int rep of u0} is admissible, an MOPS with respect to $u_0$ may as well be referred to as orthogonal with respect to the weight function $W_q$. In this case,  from \eqref{even order diff funct eq} we deduce a $q$-self-adjoint version of \eqref{2k order diff eq}: 
\begin{equation}\label{gen Rod}
	q^{-k}D_{q^{-1} }^{k}\biggl(  
		\  \Big( \prod_{\sigma=0}^{k-1} \Phi_{\sigma}(x)\Big) W_{q}(x)  \Big(D_{q}^{k} P_{n}\Big)(x)  \biggr) 
	=  \Xi_n (k;q) \  W_{q}(x) \ P_{n}(x) \ , \text{for }  \ n\geqslant 0. 
\end{equation}
Since $\{P_{n}\}_{n\geqslant 0}$ forms a basis of $\mathcal{P}$,  we get Corollary \ref{Cor: Lkq via Uq}. 

In particular, if we consider $n=k$ case in \eqref{gen Rod}, we recover the known Rodrigues' formula (see \cite[p.70]{Koekoek}, \cite{Weber}): 
$$
		  \Xi_k (k;q) \ P_{k}(x)
		  = [1;q]_{k} q^{-k} (W_{q}(x) )^{-1}D_{q^{-1} }^{k}\biggl(  
		\  \Big( \prod_{\sigma=0}^{k-1} \Phi_{\sigma}(x)\Big)  \ W_{q}(x)  \biggr) 
		 \ , \ k\geqslant 0. 
$$

\subsection{Integral powers of the second-order $q$-differential operator}

%
%Any $q$-classical sequence $\{P_{n}\}_{n\geqslant 0}$ fulfills the second order $q$-differential equation \eqref{Bochner equation}. This amounts to the same as saying that the elements of $\{P_{n}\}_{n\geqslant 0}$ are eigenfunctions of the second order $q$-derivative operator $\mathcal{L}_q$ in \eqref{Lq def}.
%%:=  \Phi(x)  D_{q}\circ D_{q^{-1}} - \Psi(x)   D_{q^{-1} }$.  
%As a consequence, the elements of $\{P_{n}\}_{n\geqslant 0}$ are necessarily eigenfunctions of any integer composite power of the aforementioned operator $\mathcal{L}_q$. In other words,

 The elements of any $q$-classical sequence $\{P_{n}\}_{n\geqslant 0}$ are solutions of the even order $q$-differential equation in $y$
\begin{equation}\label{Lq^k}
	\mathcal{L}_q^{k}[y](x) = (\chi_{n})^{k} y(x) \ , \ n\geqslant 0.
\end{equation}

This raises the natural question of obtaining an explicit expression for $\mathcal{L}_q^{k}$ as well as of relating it to the $2k$th order $q$-differential operator $\mathcal{L}_{k;q}$
given in \eqref{2k order diff eq}. As the elements of a $q$-classical polynomial sequence are eigenfunctions of both operators, these problems may be addressed through the comparison between the eigenvalues $(\chi_{n})^k$ given by \eqref{eigenval Bohner chi n} and $\Xi_{n}(k)$ given by \eqref{Xi n k expression}. Observe that $\Psi'(0)\neq [j]_q$ for any integer $j\geqslant 0$, otherwise the $q$-classical form $u_0$ fulfilling \eqref{u0 Dq classical} would have an undetermined sequence of moments, contradicting the assumptions. For this reason, $\Xi_{n}(k;q)\neq 0$ for  $n\geqslant k$.

\begin{proof}[\textbf{Proof of Theorem \ref{Prop: Lk}}]  We begin by relating the $k$th power of the eigenvalues $\chi_n$ given by \eqref{eigenval Bohner chi n} and the eigenvalues $\Xi_{n}(k;q)$  given by \eqref{Xi n k expression} for any integer $n\geqslant 0$. In the light of equalities \eqref{q fact to monomials2} with $x=[n]_{q^{-1}}$, when $\deg\Phi<2$ we have 
$$
	(\chi_{n})^k = \sum_{j=0}^k  {S}_{q^{-1}} (k,j) \,  q^{(\deg\Phi -1)\frac{j(j-1)}{2}} \, (-\Psi'(0))^{k-j} \ 
		\Xi_{n}(j;q) \ , \ n\geqslant0, 
$$
whereas, based on the identity \eqref{qjs2} with $x=[n]_{q^{-1}}\left( z + [n]_{q} \right) $, when $\deg\Phi=2$ we obtain 
$$
	(\chi_{n})^k = \sum_{j=0}^k \JS_{k}^j(z;q^{-1})  q^{\frac{j(j+1)}{2}-k}\, \Xi_{n}(j;q)\ , \ n\geqslant0. 
$$

According to Proposition \ref{Prop qclassical}, the $q$-classical polynomials are solutions of the $2k$-order $q$-differential equation  \eqref{Lq^k} where the operator $\mathcal{L}_q$ and the eigenvalues $\chi_n$ are those given by \eqref{Lq def}.  
On the other hand, Theorem  \ref{Thm: 2k th order diff eq} ensures the $q$-classical polynomials to be solutions of the $2k$-order $q$-differential equation \eqref{2k order diff eq}, whose corresponding eigenvalues $\Xi_{n}(k;q)$ are given in Theorem \ref{Thm: 2k th order diff eq}. 
The result now follows from the fact that any $q$-classical sequence forms a basis of $\mathcal{P}$. 
\end{proof}

%Thus, the result can be applied to any analytical function. 

In a similar manner, for a given positive integer $k$ we can express the eigenvalues $\Xi_{n}(k;q)$ in terms of $(\chi_{n})^k$ for any integer $n\geqslant 0$. Indeed, after  $q\rightarrow q^{-1}$, \eqref{q fact to monomials1} and  \eqref{qjs1},  with $x= [n]_{q^{-1}}$ and 
$x= [n]_{q^{-1}}  \left(  [n]_{q} -A\right) $, respectively,  imply the following identities  
$$
	\Xi_{n}(k;q)
	= \left\{\begin{array}{lcl}
	\ds 		q^{(1-\deg\Phi)\frac{k(k-1)}{2}}   \sum_{j=0}^{k}  {s}_{q^{-1}} (k,j)  
				\, (-\Psi'(0))^{k-j}\,  (\chi_n) ^j
			& \text{if} & \deg\Phi=0,1, \\
	\ds 		q^{-\frac{k(k+1)}{2}}\,    \sum_{j=0}^{k}  \Jc_{k}^j (z;q^{-1}) (k,j) \, q^{j}\,  
				(\chi_n) ^j
			& \text{if} & \deg\Phi=2, 
	\end{array}\right.
$$
which hold for any integer $n\geqslant 0$, where $z= -(1 +q\Psi'(0)) $.

By reversing \eqref{Lq k and Lqk Rel1}
using the $q$-Stirling or $q$-Jacobi-Stirling numbers  of the first kind (or evoking analogous arguments to those in the proof of Theorem~\ref{Prop: Lk}), 
we deduce  the reciprocal identities of \eqref{Lq k and Lqk Rel1}. More precisely, we obtain Corollary \ref{Cor: reciprocal identities}. 

All the $q$-classical polynomials were studied in \cite{KherijiMar2002}. With the equivalence relation 
$$
	\{P_n(x)\}_{n\geqslant 0} \sim \{R_n(x)\}_{n\geqslant 0} 
	\quad \text{if } \quad 
	\exists\ a \neq 0 \quad \text{such that }\quad R_n(x)=a^{-n} P_n(ax), \ n\geqslant 0, 
$$
several equivalence classes were obtained and these are summarised in Table  \ref{Table qclassical}. Since the $q$-classical polynomials are solutions to the $q$-differential equation \eqref{Lq def} and the polynomial coefficient $\Phi$, of degree 2 at most, plays a fundamental role in this work, we have arranged the information according to its properties. 

{\small \begin{table}[h]\caption{The $q$-classical polynomials. }\label{Table qclassical}
\begin{center}~\ % \rowcolors{1}{white}{LightGrey}
\begin{tabular}{|c|c|}
\hline $\deg \Phi $  & {\bf $q$-classical MOPS }\\
\hline \hline
0 & \parbox{10cm}{\vspace{0.2cm}Al-Salam Carlitz polynomials \ $\cdot$ \  Discrete q-Hermite polynomials\vspace{0.2cm}} \\
1 &  \parbox{10cm}{\vspace{0.2cm} Big q-Laguerre \ $\cdot$ \ q-Meixner  \ $\cdot$ \ Wall q-polynomials \\
		 \ q-Laguerre polynomials \ $\cdot$ \    Little q-Laguerre polynomials  \\
		   q-Charlier I polynomials\vspace{0.2cm} }
		 \\
 2 (with double root) & \parbox{10cm}{\vspace{0.2cm}Alternative q-Charlier polynomials  \ $\cdot$ \ Stieltjes-Wigert q-polynomials\vspace{0.2cm}}\\ 
2 (with 2 single roots) & \parbox{10cm}{\vspace{0.2cm}Little q-Jacobi polynomials  \ $\cdot$ \  q-Charlier II polynomials  \\
			  Generalized Stieltjes-Wigert q-polynomials  \ $\cdot$ \ 
			Big q-Jacobi  \\
			 Bi-generalized Stieltjes-Wigert q-polynomials }\\
			 \hline
\end{tabular}
\end{center}
\end{table} }

\section{Particular cases}\label{Sec:examples}
Among all the $q$-classical polynomials, we single out some examples from Table \ref{Table qclassical} that illustrate the results here obtained. These are mere illustrations, since the results apply to any $q$-classical family.  Among other things, it permits to obtain differently many expressions for even order $q$-differential equations fulfilled by the $q$-classical polynomials. 
 
\subsection{Al-Salam-Carlitz polynomials}
The monic   Al-Salam-Carlitz polynomials are 
$$
	P_{n}(x,a;q) 
	= (-a)^n q^{\binom{n}{2}}{}_2\phi_1 \left(\begin{array}{c}q^{-n} , x^{-1}\\ 0 \end{array};q,\frac{qx}{a}\right)
 	= \sum_{k=0}^n {n\brack k}_q  (-a)^{n-k}   q^{\binom{n-k}{2}} (x^{-1};q)_{k} \ x^k
$$
%$\{P_{n}(x;a;q)\}_{n\geqslant 0}$ 
and satisfy a recurrence relation of the type \eqref{MOPS rec rel}  with  $\beta_n=(1+a)q^n$ and $\gamma_{n+1}=  a q^n (q^{n+1}-1) $, 
for $n\geqslant 0$, see \cite[p.534]{Koekoek}, \cite{KherijiMar2002}. They are orthogonal with respect to the regular linear functional $u_0$ fulfilling \eqref{u0 Dq classical} with $\Phi(x)=1$, 
 $\Psi(x)= (a(q-1))^{-1}(x-(1+a))$, provided that $a\neq0$. 
%They can be written via a terminating basic hypergeometric series as follows
%$$
%	P_{n}(x;a;q) 
%	= (-a)^n q^{\binom{n}{2}}{}_2\phi_1 \left(\begin{array}{c}q^{-n} , x^{-1}\\ 0 \end{array};q,\frac{qx}{a}\right)
% 	= \sum_{k=0}^n {n\brack k}_q  (-a)^{n-k}   q^{\binom{n-k}{2}} (x^{-1};q)_{k} \ x^k
%$$
 They are positive definite whenever $a<0$ and $0<q<1$ or when $a>0$ and $q>1$.  The case where   $a=-1$ and $0<q<1$ corresponds to the so-called  {Discrete $q$-Hermite polynomials}. We further notice that this MOPS is also Appell with respect to the $q$-difference operator $D_q$, so is to say that $P_n^{[k]} (x,a;q) = P_{n}(x,a;q)$.

In the light of Theorem \ref{Thm: 2k th order diff eq}, for any positive integer $k$,  the  Al-Salam-Carlitz polynomials are eigenfunctions of the $q$-differential operator 
$$
	\mathcal{L}_{k;q}[y](x) 
%	:= \sum_{\nu=0}^{k}  \qbinom{k}{\nu}{q^{-1}}  
%		 ([\nu]_{q^{-1}}!) \frac{q^{-(k-\nu)} }{(a(1-q))^{\nu}} 		P_{\nu} (x)
%		\bigg(D_{q^{-1}}^{k-\nu} \circ D_{q}^{k}\ y \bigg)(q^{-\nu}x)  . 
	 := \sum_{\nu=0}^{k} \frac{[1;q^{-1}]_k}{[1;q^{-1}]_{n-k}}
		  \frac{q^{-k} }{(a(q^{-1}-1))^{\nu}} 		P_{\nu} (x,a;q)
		\bigg(D_{q^{-1}}^{k-\nu} \circ D_{q}^{k}\ y \bigg)(q^{-\nu}x)  . 
$$
Following Proposition \ref{Prop: Lk}, for each $k\geqslant 1$, we may write
$$
	\mathcal{L}_{q} ^k[y](x)
	= \sum_{j=0}^{k}  {S}_{q^{-1}} (k,j) \,  q^{ -\frac{j(j-1)}{2}} \, (a(q-1))^{-(k-j)}\,  	
				\mathcal{L}_{j;q}[y](x). 
$$

When $0<q<1 $ and $a<0 $,  the regular form $u_0$ admits an integral representation as in \eqref{int rep of u0} via the weight function \cite[p.87]{KherijiMar2002}
%$$
% W_q(x) = \left\{ \begin{array}{lcl}
%				0 & \text{if} & x < aq^{-1} \text{ or } \ x >q^{-1}, \\
%				K (qx ;q)_\infty \ (a^{-1}qx ; q)_\infty & \text{if} & aq^{-1} \leqslant x \leqslant q^{-1}, 
%			\end{array}\right. 
%$$
$$
	W_q(x) =K (qx ;q)_\infty \ (a^{-1}qx ; q)_\infty \mathbbm{1}_{[aq^{-1} ,q^{-1} ]}(x)
	\quad \text{with } \quad
	K^{-1}= \int_{aq^{-1}}^{q^{-1}}  (qt ; q)_\infty  (a^{-1}qt ; q)_\infty dt ,
$$
where $\mathbbm{1}_{A}(x)$ stands for the characteristic function over the set $A$ (see introduction).
%with $ K= \left( \int_{aq^{-1}}^{q^{-1}}  (qt ; q)_\infty  (a^{-1}qt ; q)_\infty dt \right)^{-1}, $ 
Hence, according to Corollary \ref{Cor: Lkq via Uq},  we may alternatively write 
$$
	\mathcal{L}_{k;q}[y](x) 
	:= \biggl(  q^k (qx ;q)_\infty \ (a^{-1}qx ; q)_\infty \big)^{-1}
		D_{q^{-1}}^{k}\biggl(    (qx ;q)_\infty \ (a^{-1}qx ; q)_\infty \bigl(D_{q}^{k} y(x)\bigr)   \biggr),
$$
which is valid for any $y\in\mathcal{P}$. 
When $k=1$, the latter reduces to the well known $q$-Sturm-Liouville equation fulfilled by the Al Salam-Carlitz $q$-polynomials (see, for instance, \cite[p.472]{IsmailBook}).

\subsection{The Stieltjes-Wigert polynomials}

The monic Stieltjes-Wigert polynomials  are 
%$$\{P_{n}(x;q)\}_{n\geqslant 0}$$  
\begin{eqnarray*}
	P_{n}(x;q) = (-1)^n q^{-\frac{n}{2}(2n+1)} {}_1\phi_1 \left(\begin{array}{c}q^{-n} \\ 0 \end{array};q,-q^{n+3/2}x\right) = \sum_{k=0}^n \frac{(-1)^{n+k} q^{k(k+\frac{1}{2})-n(n+\frac{1}{2})}}{(q;q)_{n-k}} x^k. 
\end{eqnarray*} 
They are orthogonal with respect to $u_0$ fulfilling \eqref{u0 Dq classical} with 
$\Phi(x)= x^2$ and $\Psi(x)= -  (q-1)^{-1} \{ x-q^{-3/2}\}$, and they satisfy a recurrence relation of the type \eqref{MOPS rec rel}  with  $\beta_n=(1+q -q^{n+1})q^{-2n-3/2} $ and $\gamma_{n+1}=(1-q^{n+1})q^{-4(n+1)} , \ n\geqslant 0  $, see \cite{KherijiMar2002}, \cite[p.544]{Koekoek}.  
%They can be represented as well via a terminating basic hypergeometric function 
%\begin{eqnarray*}
%	P_{n}(x;q) = (-1)^n q^{-\frac{n}{2}(2n+1)} {}_1\phi_1 \left(\begin{array}{c}q^{-n} \\ 0 \end{array};q,-q^{n+3/2}x\right) = \sum_{k=0}^n \frac{(-1)^{n+k} q^{k(k+\frac{1}{2})-n(n+\frac{1}{2})}}{(q;q)_{n-k}} x^k, 
%\end{eqnarray*} 
%for any $\ n\geqslant 0$. 
When $0<q<1$,  $u_0$ is positive definite, and admits an integral representation via a weight-function $W_q$ as in  \eqref{int rep of u0} which is given by 
%$$
%	W_{q}(x) = \left\{ 
%	\begin{array}{ll}
%		0 ,& x\leqslant 0, \\
%		\sqrt{\frac{q}{2\pi \ln q^{-1}}} \exp\left( - \frac{\ln^2 x}{2\ln q^{-1}} \right)
%			,&x>0  . 
%	\end{array}
%	\right.
%$$
$$ 
	W_{q}(x) = \sqrt{\frac{q}{2\pi \ln q^{-1}}} \exp\left( - \frac{\ln^2 x}{2\ln q^{-1}} \right) \mathbbm{1}_{(0,+\infty)}(x).
$$
In the light of Corollary \ref{Cor: Lkq via Uq}, for $0<q<1$, the Stieltjes-Wigert $q$-polynomials are eigenfunctions of 
$$
	\mathcal{L}_{k;q}[y](x)=
		q^{-k}\exp\left( \frac{\ln^2 x}{2\ln q^{-1}} \right)
		D_{q^{-1}}^{k}\Big(  
		\  x^{2k} \ \exp\left( - \frac{\ln^2 x}{2\ln q^{-1}} \right) \bigl(D_{q}^{k} y(x)\bigr)   \Big)  .
$$
According to Theorem \ref{Thm: 2k th order diff eq},  the latter operator can be expanded as follows 
$$
	\mathcal{L}_{k;q}[y](x)= \sum_{\nu=0}^{k}	\qbinom{k}{\nu}{q^{-1}}  
		\alpha_{k,\nu} \ 
		x^{2k-2\nu} P_\nu^{[k-\nu]}(x;q) 
		\bigg(D_{q^{-1}}^{k-\nu} \circ D_{q}^{k}\ y \bigg)(q^{-\nu}x), 
$$
with 
$$\alpha_{k,\nu} 
	= q^{-(k-\nu)}  \ \left([\nu]_q!\right)
		\Big( \prod_{\sigma=1}^\nu 
	 			q^{-2 k+\sigma}  \Big(  [2 k-\sigma]_{q} 
		+\frac{1}{q-1}  \Big)  	  \Big).
$$

Finally, by means of the  $q$-Jacobi-Stirling numbers of second kind, for any positive integer $k$, the $k$th  power of the $q$-differential operator 
$$\mathcal{L}_q:= x^2 D_{q}\circ D_{q^{-1}}  
	+  (q-1)^{-1} \{ x-q^{-3/2}\}   D_{q^{-1} }$$ 
is given by  the formula 
$$
	\mathcal{L}_q^k[y](x) =  \sum_{j=0}^{k}  \JS_{k}^{j}((q-1)^{-1};q^{-1}) \,  q^{\frac{j(j+1)}{2}-k}\,  \mathcal{L}_{j;q}[y](x)
	\ , \ \forall y\in\mathcal{P} , 
$$
which, after \eqref{JSc to Sc}, can be alternatively written as %\left(\frac{q}{q-1}\right)^{n-k} S_q(n,k)
$$
	\mathcal{L}_q^k[y](x) =  \sum_{j=0}^{k}  \frac{q^{\frac{j(j-1)}{2}}}{(q-1)^{k-j}}S_q(k,j) \,    \mathcal{L}_{j;q}[y](x)
	\ , \ \forall y\in\mathcal{P} .
$$

\subsection{The  Little $q$-Jacobi polynomials}

The monic Little $q$-Jacobi polynomials  are
%$\{P_{n}(x;a,b|q)\}_{n\geqslant0}$ 
\begin{eqnarray*}
	P_{n}(x;a,b|q) &=& \frac{(-1)^n q^{\binom{n}{2}}(aq;q)_{n} }{(abq^{n+1};q)_{n}}  {}_{2}\phi_{1}\left(\begin{array}{c}q^{-n},abq^{n+1}\\ aq \end{array};q,qx\right) \\
		&=&\frac{(aq;q)_{n}}{(abq^{n+1};q)_{n}} 
		\sum_{k=0}^n {n\brack k}_q \frac{(-1)^{n-k}(abq^{n+1};q)_{k}}{(aq;q)_{k}}
		q^{\binom{n-k}{2}} x^k
\end{eqnarray*}
and satisfy the $q$-differential equation  \eqref{Lq def} with 
$\Psi(x)=(abq^2 (q-1))^{-1} \{ (1-abq^2)x + aq -1\}$ and \linebreak$\Phi(x)= x (x-b^{-1}q^{-1})$,  for $n\geqslant 0$, see \cite[p.482] {Koekoek}, \cite{KherijiMar2002}.
Hence,  for each positive integer $k$, we have 
$$
	P_{n}^{[k]}(x;a,b|q) = P_{n}(x;aq^k,bq^k|q) 
		%=  \frac{(-1)^n q^{\binom{n}{2}}(aq^{k+1};q)_{n} }{(abq^{2k+n+1};q)_{n}}  
		%{}_{2}\phi_{1}\left(\begin{array}{c}q^{-n},abq^{2k+n+1}\\ aq^{k+1} \end{array};q,qx\right)
		, \quad  n\geqslant 0. 
$$
They form an MOPS with respect to the regular linear functional $u_{0}$ fulfilling \eqref{u0 Dq classical} provided that the parameters $a,b$ satisfy $a,b,ab\neq q^{-(n+2)}$, for $n\geqslant 0$. The linear functional $u_{0}$ is positive definite when one of the following two conditions is realised \cite{KherijiMar2002}
\begin{align*} 
	& 0<q<1 , \ 0<a<q^{-1}, \ b\in]-\infty,q^{-1}[ - \{0\},\\
	& q>1 , \ a > q^{-1}, \ b\in]-\infty,0[ \cup]q^{-1}, +\infty[. 
\end{align*}
We refer to  \cite{KherijiMar2002,Koekoek} for further details, but we highlight some results from \cite[pp. 100-102]{KherijiMar2002}.

For certain values of $a, b$ and $q$, it is possible to get an integral representation for the regular form $u_0$ via a weight function $W_q(x)$, as described in \eqref{int rep of u0}. Setting $a:=q^{\alpha-1}$ with $\alpha>0$, then depending on the range of values for $b$ and $q$, the weight-function $W_q(x)$ can be written as follows: 
\begin{itemize}
\item[-] For $0<q<1$ and $b \in]-\infty,1[ - \{0\}$, we have 
$$
	W_{q}(x) = K x^{\alpha-1} \dfrac{(qx;q)_{\infty }}{(bqx;q)_{\infty}} \mathbbm{1}_{(0,q^{-1})}(x) \quad \text{ with } \ 
	K^{-1}= \int_{0}^{q^{-1}} t^{\alpha-1}\dfrac{(qt;q)_{\infty }}{(bqt;q)_{\infty}} {\rm d}t.
$$ 
\item[-] For $q>1$ and  $b\geqslant 1$, we have
$$
	W_{q}(x) =K x^{\alpha-1} \dfrac{(bx;q^{-1})_{\infty }}{(x;q^{-1})_{\infty}} \mathbbm{1}_{(0,b^{-1})}(x)
	\quad \text{ with } \ 
	K^{-1}=  \int_{0}^{b^{-1}} t^{\alpha-1}\dfrac{(bt;q^{-1})_{\infty }}{(t;q^{-1})_{\infty}} {\rm d}t .
$$ 
\item[-] For $q>1$ and  $b\leqslant 0$, we have
$$
	W_{q}(x) =K  |x|^{\alpha-1} \dfrac{(bx;q^{-1})_{\infty }}{(-|x|;q^{-1})_{\infty}} \mathbbm{1}_{(b^{-1},0)}(x)
	\quad \text{ with } \ 
	K^{-1}=   \int_{0}^{|b|^{-1}} t^{\alpha-1}\dfrac{(|b|t;q^{-1})_{\infty }}{(-t;q^{-1})_{\infty}} {\rm d}t .
$$ 
\end{itemize}

On the other hand, it is possible to represent $u_{0}$ by a discrete measure. Namely, for $0<q<1$, we have  (see also \cite[p.482]{Koekoek})
$$
	u_{0}= \frac{(aq;q)_{\infty}}{(abq^2;q)_{\infty}}
		\sum_{k\geqslant 0} \frac{(bq;q)_{k}}{(q;q)_{k}} (aq)^k \delta_{q^{k}} , 
		\ \text{ with }\   |aq|<1, \  |bq|<1,
$$
whereas, when $q>1$, we can write 
$$
	u_{0}= \frac{(a^{-1}q^{-1};q^{-1})_{\infty}}{(a^{-1}b^{-1}q^{-2};q^{-1})_{\infty}}
		\sum_{k\geqslant 0} \frac{(b^{-1}q^{-1};q^{-1})_{k}}{(q^{-1};q^{-1})_{k}} (a^{-1}q^{-1})^k 
				\delta_{b^{-1}q^{-(k+1)}} , 
				\text{ for }|aq|>1, \  |bq|>1. 
				%There is a misprint on page 100 in [Kheriji and Maroni]: should be read as 
				%ab\neq q^{-m-2}, m\geqslant0.
$$
In the light of Corollary \ref{Cor: Lkq via Uq}, whenever the  Little $q$-Jacobi linear functional $u_0$ admits an integral representation via a weight function $W_q(x)$, the  Little $q$-Jacobi polynomials $\{P_{n}(x;a,b|q)\}_{n\geqslant0}$ are eigenfunctions of 
\begin{equation}\label{Lkq Little q Jac}
	\mathcal{L}_{k;q}[y](x)=
		q^{-k}\big(W_{q}(x) \big)^{-1}
		D_{q^{-1}}^{k}\Big(  
		\  \Big(\prod_{\sigma=0}^{k-1} x(x-b^{-1}q^{-(\sigma+1)})\Big) \ W_{q}(x) \bigl(D_{q}^{k} y(x)\bigr)   \Big)  .
\end{equation}
In any of the above mentioned cases, following Theorem \ref{Thm: 2k th order diff eq},  the latter operator can be written as follows 
$$
	\mathcal{L}_{k;q}[y](x)= \sum_{\nu=0}^{k}	\qbinom{k}{\nu}{q^{-1}}  
		\alpha_{k,\nu} (x)
		 \Big(\prod_{\sigma=0}^{k-1} x(x-b^{-1}q^{-(\sigma+1)})\Big) 
		\bigg(D_{q^{-1}}^{k-\nu} \circ D_{q}^{k}\ y \bigg)(q^{-\nu}x)  
$$
with 
$$\alpha_{k,\nu}(x) 
	= q^{-(k-\nu)}  \ 
		\Big( \prod_{\sigma=1}^\nu 
	 [\sigma]_{q^{-1}}q^{-2 k+\sigma}  \Big(  [2 k-\sigma]_{q} 
		+z \Big)  	  \Big)P_{\nu}(x;aq^{k-\nu},bq^{k-\nu}|q),
$$
and  $z= - (1+q \Psi'(0)) 
= \frac{1}{q-1} \left( 1- (abq)^{-1}\right)$.

Finally, by means of the  $q$-Jacobi-Stirling numbers of second kind, for each integer $k\geqslant0$, the $k$th  power of the $q$-differential operator 
\begin{equation}\label{Lq LittleJac}
\mathcal{L}_q:= x (x-b^{-1}q^{-1}) D_{q}\circ D_{q^{-1}}  
	- \left((abq^2 (q-1))^{-1} \{ (1-abq^2)x + aq -1\}\right)   D_{q^{-1} }
\end{equation}
is given by  the formula 
$$
	\mathcal{L}_q^k[y](x) =  \sum_{j=0}^{k}  \JS_{k}^{j}(z;q^{-1}) \,  q^{\frac{j(j+1)}{2}-k}\,  \mathcal{L}_{j;q}[y](x),
$$
for any $y\in\mathcal{P}$, where $z= - (1+q \Psi'(0)) 
= \frac{1}{q-1} \left( 1- (abq)^{-1}\right)$.  According to Corollary \ref{Cor: reciprocal identities} the reciprocal relations can also be established:
$$
	\mathcal{L}_{k;q}[y](x) =  \sum_{j=0}^{k}   \Jc_{k}^j (z;q^{-1}) \,  q^{j-\frac{k(k+1)}{2}}\,  
				\mathcal{L}_{q} ^j[y](x), \ \text{ for any } \ y\in\mathcal{P}. 
$$

\begin{remark} Amongst the $q$-classical sequences it is possible to establish limiting relations between the several $q$-classical families. (For a catalogue of such relations we refer to \cite{Koekoek}.) In the same manner, it is also possible to establish limiting relations between two operators, $\mathcal{L}_{k;q}$ and $\widehat{\mathcal{L}}_{k;q}$ (that have $P_{n}$ and $\widehat{P}_{n}$ as eigenfunctions, respectively), mirroring the limiting relation that the two corresponding $q$-classical sequences $\{P_{n}\}_{n\geqslant 0}$ and $\{\widehat{P}_{n}\}_{n\geqslant 0}$ satisfy.

For example, the monic $q$-Laguerre polynomial sequence $\{\widehat{P}_n(x; \alpha; q)\}_{n\geqslant 0}$ is related to the Little $q$-Jacobi polynomials $\{P_{n}(x;q^{\alpha},b|q)\}_{n\geqslant 0}$, that we have just analysed, via  the limiting relation (see \cite[p.522]{Koekoek}): 
\begin{eqnarray*}
	\widehat{P}_n(x; \alpha; q)
	&=& \lim_{b\to -\infty} \left( (-bq)^n P_{n}(-b^{-1}q^{-1}x;q^{\alpha},b|q) \right) \\
	&=& \frac{(-1)^n (q^{\alpha+1},;q)_n}{q^{n(n+\alpha)}} {}_{1}\phi_{1}\left(\begin{array}{c}q^{-n}\\ q^{\alpha+1} \end{array};q, -q^{n+\alpha+1}x\right), \ n\geqslant 0. 
\end{eqnarray*}
The  operator $\widehat{\mathcal{L}}_{q^{-1}} : =  x D_{q^{-1}} \circ D_q - \frac{q^{\alpha+2}}{q-1}\left(\frac{1}{q^{\alpha+1}} - 1-x \right)D_q$ has the monic $q$-Laguerre polynomials $\widehat{P}_n(x; \alpha; q)$ as eigenfunctions, and, more generally, according to Corollary \ref{Cor: reciprocal identities}, so have the operators
$$
	\widehat{\mathcal{L}}_{k;q^{-1}}[y](x) 
	=  \sum_{j=0}^{k}  {c}_{q^{-1}} (k,j)  \, (-\Psi'(0))^{k-j}\,  
			\widehat{\mathcal{L}}_{q^{-1}}^j[y](x),  
$$
for each $k=1,2,\ldots$. In fact, the operators $\widehat{\mathcal{L}}_{k;q^{-1}}$ are a limiting case of the operators ${\mathcal{L}}_{k;q}$ in \eqref{Lkq Little q Jac} which have the Little $q$-Jacobi polynomials as eigenfunctions. 
Indeed, consider the operator $\widetilde{\mathcal{L}}_q=x(x+1) D_q \circ D_{q^{-1}}- \frac{q^{-\alpha -2} \left(-b (x+1) q^{\alpha +2}+b q+x\right)}{b (q-1)} D_{q^{-1}}$ obtained from  ${\mathcal{L}}_q$ in \eqref{Lq LittleJac} after the change of variable $x\to -b^{-1}q^{-1}x$, that is $\widetilde{\mathcal{L}}_q[y(t)][-b^{-1}q^{-1}x] = {\mathcal{L}}_q[y(-b^{-1}q^{-1}t)][x]  $, so that 
$$
	\widetilde{\mathcal{L}}_q 
%	x(x+1) D_q \circ D_{q^{-1}}
%	- \frac{1}{q-1} \frac{x-b q (q^{\alpha+1} (x+1)-1)}{q^{\alpha+2} b  } D_{q^{-1}}
	\xrightarrow[b\to -\infty]{}
	x(x+1) D_q \circ D_{q^{-1}}
	- \frac{- (x+1) + q^{-(\alpha+1)}}{q-1}     D_{q^{-1}}
	= q^{-(\alpha+2)} \widehat{\mathcal{L}}_{q^{-1}}.
$$
Besides, recall that $z=\frac{1}{q-1} \left( 1- (q^\alpha bq)^{-1}\right)$ and therefore $\lim\limits_{b\to-\infty} z = \frac{1}{q-1} $, so that, by taking into account the identities \eqref{JSc to Sc}, we have for any $y\in\mathcal{P}$ 
\begin{eqnarray*}
	\lim_{b\to-\infty}\mathcal{L}_{k;q}[y(-b^{-1}q^{-1}t)](x) 
		&=&\lim_{b\to-\infty} \left(\sum_{j=0}^{k}   \Jc_{k}^j (z;q^{-1}) \,  q^{j-\frac{k(k+1)}{2}}\,  
				\widetilde{\mathcal{L}}_{q} ^j[y](x) \right)\\
%		&=& \sum_{j=0}^{k}  \left(\frac{q}{q-1}\right)^{k-j} c_q(k,j) \,  
%				q^{-j(\alpha+2)}\widehat{\mathcal{L}}_{q^{-1}} ^j[f](x)\\
%		&=&q^{\binom{k}{2}-k(\alpha+2)} \sum_{j=0}^{k}  c_q(k,j) \,  \left(\frac{q^{\alpha+2}}{q-1}\right)^{k-j} 
%				 \widehat{\mathcal{L}}_{q^{-1}} ^j[f](x)\\
		&=& q^{\binom{k}{2}-k(\alpha+2)}\widehat{\mathcal{L}}_{k;q^{-1}}[y](x). 
\end{eqnarray*}

\end{remark}

\bigskip

The detailed analysis done for the $q$-classical sequences of Al-Salam-Carlitz polynomials, Stieltjes-Wigert polynomials and Little $q$-Jacobi polynomials, including the limiting relation to the $q$-Laguerre polynomials, can naturally be extended to all the other $q$-classical sequences listed in Table \ref{Table qclassical}. The procedure is entirely similar to the ones just described, as the required properties were already studied by several authors and a synthesis of it can be found in  \cite{KherijiMar2002,Koekoek}. 

In this paper, we have succeeded in providing the explicit expressions for any integral composite power of a second order $q$-differential operator having the $q$-classical polynomials as eigenfunctions. The study required the introduction of the new set of $q$-Jacobi-Stirling numbers, which we have characterised and provided a combinatorial interpretation.  Clearly, by allowing $q\to 1$ we recover all the classical polynomial related results found in \cite{AndrewsEggeGawLittle2013,LittleEvAlJacobi2007,LittleEvAlJacobi2000, ZengGelineau, Loureiro2010,LouMarRocha2006,Miranian} from the viewpoint of both the algebra of these operators and the combinatorial interpretations of the connection coefficients.\\

\noindent{\bf Acknowledgements.}  We thank the referees for careful reading of this manuscript and illuminating comments and remarks.

%%%%%%%%%%%%%%%%%%%%%%%%%%%

%\newpage

%\setlength{\marginrulewidth}{0pt} 

\end{document}